\tikzset{->-/.style={decoration={
			markings,
			mark=at position #1 with {\arrow{>}}},postaction={decorate}}}
\tikzset{-<-/.style={decoration={
					markings,
					mark=at position #1 with {\arrow{<}}},postaction={decorate}}}
\newcommand{\bC}{\mathbb{C}}
\newcommand{\bF}{\mathbb{F}}
\newcommand{\bQ}{\mathbb{Q}}
\newcommand{\bR}{\mathbb{R}}
\newcommand{\bZ}{\mathbb{Z}}
\newcommand{\cL}{\mathcal{L}}
\newcommand{\cV}{\mathcal{V}}
\newcommand{\cW}{\mathcal{W}}
\newcommand\lra{\longrightarrow}
\newcommand\Diff{\mathrm{Diff}}
\newcommand\colim{\operatorname*{colim}}
\newcommand{\hcoker}{/\!\!/}
\newcommand{\map}{\mathrm{map}}
\newcommand{\Tor}{\mathrm{Tor}}
\renewcommand{\epsilon}{\varepsilon}
\newcommand{\SO}{\mathrm{SO}}
\newcommand{\OO}{\mathrm{O}}
\newcommand{\cH}{\mathcal{H}}
\newcommand{\sign}{\mathrm{sign}}
\mathchardef\ordinarycolon\mathcode`\:
\theoremstyle{plain}
\newtheorem{MainThm}{Theorem}
\newtheorem{theorem}{Theorem}[section]
\newtheorem{proposition}[theorem]{Proposition}
\newtheorem{lemma}[theorem]{Lemma}
\newtheorem{corollary}[theorem]{Corollary}
\theoremstyle{definition}
\newtheorem{definition}[theorem]{Definition}
\newtheorem{example}[theorem]{Example}
\theoremstyle{remark}
\newtheorem{remark}[theorem]{Remark}
\newtheorem*{remark*}{Remark}
\newtheorem*{convention*}{Convention}
\numberwithin{equation}{section}
\title{On the cohomology of Torelli groups. II}
\author{Oscar Randal-Williams}
\email{o.randal-williams@dpmms.cam.ac.uk}
\address{Centre for Mathematical Sciences\\
Wilberforce Road\\
Cambridge CB3 0WB\\
UK}
\subjclass[2010]{55R40, 11F75, 57S05, 18D10, 20G05}
\keywords{Cohomology of diffeomorphism groups, Torelli groups, cohomology of arithmetic groups, Miller-Morita-Mumford classes}
\begin{document}
\begin{abstract}
We describe the ring structure of the rational cohomology of the Torelli groups of the manifolds $\#^g S^n \times S^n$ in a stable range, for $2n \geq 6$. Some of our results are also valid for $2n=2$, where they are closely related to unpublished results of Kawazumi and Morita.
\end{abstract}
\maketitle

\setcounter{tocdepth}{1}
\tableofcontents

\section{Introduction}

This paper can be considered as a (somewhat extensive) addendum to our earlier work with Kupers \cite{KR-WTorelli}. We shall be concerned with the manifold $W_g := \#^g S^n \times S^n$ generalising to higher dimensions the orientable surface of genus $g$, its topological group $\Diff^+(W_g)$ of orientation-preserving diffeomorphisms, and various subgroups of it. The first kind of subgroups are $\Diff(W_g, D^{2n}) \leq \Diff^+(W_g, *) \leq \Diff^+(W_g)$, the diffeomorphisms which fix a disc and a point respectively. The second kind are their \emph{Torelli subgroups}
$$\Tor(W_g, D^{2n}), \quad \Tor^+(W_g, *), \quad \Tor^+(W_g),$$
consisting of those diffeomorphisms which in addition act trivially on $H_n(W_g;\bZ)$. The intersection form on this middle cohomology group is nondegenerate and $(-1)^n$-symmetric, giving a homomorphism
$$\alpha_g : \mathrm{Diff}^{+}(W_g) \lra G_g := \begin{cases}
\mathrm{Sp}_{2g}(\bZ) & \text{ if $n$ is odd,}\\
\mathrm{O}_{g,g}(\bZ) &  \text{ if $n$ is even.}
\end{cases}$$
This map is not always surjective, but its image is a certain finite-index subgroup $G'_g \leq G_g$, even when restricted to $\Diff(W_g, D^{2n})$, so there is an outer $G'_g$-action on each of the Torelli subgroups. This makes the rational cohomology of each of the Torelli groups into $G'_g$-representations.

In \cite{KR-WTorelli}, for $2n \geq 6$ we determined $H^*(B\Tor(W_g, D^{2n});\bQ)$ as a ring and as a $G'_g$-representation in a range of degrees tending to infinity with $g$. Using the Serre spectral sequence associated to various simple fibrations relating the different Torelli groups we were able to also determine $H^*(B\Tor^+(W_g, *);\bQ)$ and $H^*(B\Tor^+(W_g);\bQ)$ as $G'_g$-representations. This kind of argument was not able to determine the ring structure, however, as multiplicative information gets lost when passing to the associated graded of the Serre filtration. Here we shall determine $H^*(B\Tor^+(W_g, *);\bQ)$ and $H^*(B\Tor^+(W_g);\bQ)$ as $\bQ$-algebras too: this is achieved in Theorem \ref{thm:TorelliCalc}. The statement given there is more powerful, but just as in \cite[Section 5]{KR-WTorelli} one can extract from it the following presentation for $H^*(B\Tor^+(W_g);\bQ)$, which is easier to parse. (A presentation for $H^*(B\Tor^+(W_g, *);\bQ)$ can be extracted in a similar way.)

Let us write $H(g) := H^n(W_g;\bQ)$, on which $G'_g$ operates in the evident way. Let $\lambda : H(g) \otimes H(g) \to \bQ$ denote the intersection form, and $\{a_i\}_{i=1}^{2g}$ be a basis of $H(g)$ with dual basis $\{a_i^\#\}_{i=1}^{2g}$ in the sense that $\lambda(a_i^\#, a_j) = \delta_{ij}$, so that the form dual to the pairing $\lambda$ is $\omega = \sum_{i=1}^{2g} a_i \otimes a_i^\#$. In Section \ref{sec:ModifiedMMM} we will construct certain ``modified twisted Miller--Morita--Mumford classes'', which when restricted to the Torelli group yield $G'_g$-equivariant maps
$$\bar{\kappa}_c : H(g)^{\otimes r} \lra H^{n(r-2) + |c|}(B\Tor^+(W_g);\bQ)$$
for each $c \in \bQ[e, p_1, p_2, \ldots, p_{n-1}] = H^*(B\SO(2n);\bQ)$ and each $s \geq 0$. When $r=0$ we write $\bar{\kappa}_c = \bar{\kappa}_c(1)$; these agree with the usual Miller--Morita--Mumford classes $\kappa_c$. Finally, we write $\chi := \chi(W_g) = 2 + (-1)^n 2g$ and will always suppose that this is not zero (i.e.\ that $(n,g) \neq (\text{odd}, 1)$).

\begin{MainThm}\label{thm:A}
If $2n \geq 6$ then, in a range of degrees tending to infinity with $g$, $H^{*}(B\Tor^+(W_g);\bQ)$ is generated as a $\bQ$-algebra by the classes $\bar{\kappa}_c(v_1 \otimes \cdots \otimes v_r)$ for $c$ a monomial in $e, p_1, \ldots, p_{n-1}$, and $r \geq 0$, such that $n(r-2) + |c| > 0$. A complete set of relations in this range is given by
\begin{enumerate}[(i)]
\item $\bar{\kappa}_c(v_{\sigma(1)} \otimes \cdots \otimes v_{\sigma(r)}) = \sign(\sigma)^n \cdot \bar{\kappa}_c(v_1 \otimes \cdots \otimes v_r)$,

\item $\bar{\kappa}_e(v_1) = 0$,

\item $\begin{aligned}[t]
\sum_i \bar{\kappa}_x(v \otimes a_i) \cdot \bar{\kappa}_y(a_i^\# \otimes w) &= \bar{\kappa}_{x\cdot y}(v \otimes w) + \tfrac{1}{\chi^2}\bar{\kappa}_{e^2} \cdot \bar{\kappa}_x(v) \cdot \bar{\kappa}_y(w)\\
&\nonumber \quad\quad - \tfrac{1}{\chi}\bigl(\bar{\kappa}_{e \cdot x}(v) \cdot \bar{\kappa}_y(w) + \bar{\kappa}_{x}(v) \cdot \bar{\kappa}_{e \cdot y}(w)\bigr),
\end{aligned}$

\item $\begin{aligned}[t]\sum_i \bar{\kappa}_x(v \otimes a_i \otimes a_i^\#) = \tfrac{\chi-2}{\chi} \bar{\kappa}_{e\cdot x}(v) + \tfrac{1}{\chi^2}\bar{\kappa}_{e^2} \cdot \bar{\kappa}_{x}(v)\end{aligned}$,

\item $\bar{\kappa}_{\cL_i} =0$, where $\cL_i$ denotes the $i$th Hirzebruch $\cL$-class,
\end{enumerate}
for $v \in H(g)^{\otimes r}$ and $w \in H(g)^{\otimes s}$.
\end{MainThm}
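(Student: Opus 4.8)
I would deduce Theorem~\ref{thm:A} from the structural computation recorded in Theorem~\ref{thm:TorelliCalc}, so let me indicate how I would establish that and then how the presentation (i)--(v) is read off. The input is the computation of $H^*(B\Tor(W_g, D^{2n});\bQ)$ in a stable range from \cite{KR-WTorelli}, as a ring and as a $G'_g$-representation, generated by the twisted Miller--Morita--Mumford classes. To descend to $\Tor^+(W_g)$ I would use the tower of fibrations
\[
B\Tor(W_g, D^{2n}) \lra B\Tor^+(W_g,*) \lra B\Tor^+(W_g),
\]
whose middle term is the total space $E$ of the universal $W_g$-bundle $\pi\colon E\to B\Tor^+(W_g)$ and whose first map is the oriented vertical frame bundle $\Fr^+(T_\pi E)\to E$, a principal $\SO(2n)$-bundle. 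The step I expect to be the main obstacle is that the ring structure degenerates if these are run as Serre spectral sequences. Instead: since $\chi\neq0$ and $\pi_!(e(T_\pi E))=\chi$, the map $\pi^*\colon H^*(B\Tor^+(W_g);\bQ)\to H^*(E;\bQ)$ is split injective with retraction $\beta\mapsto\chi^{-1}\pi_!(e(T_\pi E)\cup\pi^*\beta)$, so it suffices to describe $H^*(E;\bQ)$ as a ring together with the operation $\pi_!$. In turn, $\Fr^+(T_\pi E)\to E$ is the principal $\SO(2n)$-bundle classified by $T_\pi E\colon E\to B\SO(2n)$, so $H^*(E;\bQ)$ is recovered from the known ring $H^*(B\Tor(W_g, D^{2n});\bQ)=H^*(\Fr^+(T_\pi E);\bQ)$ by an Eilenberg--Moore argument over $H^*(B\SO(2n);\bQ)=\bQ[e,p_1,\dots,p_{n-1}]$; the work is to show this collapses in the stable range --- using $\chi\neq0$ to ensure $e(T_\pi E)$ is a non-zero-divisor, so that $e(T_\pi E),p_1(T_\pi E),\dots,p_{n-1}(T_\pi E)$ behave like a regular sequence on the relevant cohomology --- and to carry the ring structure through the collapse. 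This is what Theorem~\ref{thm:TorelliCalc} provides.

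Granting this, the generators are the modified classes of Section~\ref{sec:ModifiedMMM}, namely $\bar\kappa_c(v_1\otimes\cdots\otimes v_r)=\pi_!\bigl(c(T_\pi E)\cup\tilde v_1\cup\cdots\cup\tilde v_r\bigr)$, where $\tilde v_i\in H^n(E;\bQ)$ is the extension of $v_i\in H(g)$ pinned down uniquely by the normalisation $\pi_!\bigl(e(T_\pi E)\cup\tilde v_i\bigr)=0$ --- available exactly because $\chi\neq0$; with this choice, relation~(ii) holds by construction. That the $\bar\kappa_c(v_1\otimes\cdots\otimes v_r)$ with $n(r-2)+|c|>0$ generate $H^*(B\Tor^+(W_g);\bQ)$ follows by combining the generation statement upstairs with the description of $H^*(E;\bQ)$ above: modulo the ``fibre'' classes --- the $\tilde v_i$ and the exterior generators transgressing to $e(T_\pi E)$ and the $p_i(T_\pi E)$ --- the twisted Miller--Morita--Mumford classes of $B\Tor(W_g, D^{2n})$ become polynomials in the $\bar\kappa$'s, and $\pi^*$ identifies $H^*(B\Tor^+(W_g);\bQ)$ with the subalgebra they span.

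The relations are then verified by computation in the universal bundle. Relation~(i) is the $(\pm1)$-symmetry inherited from upstairs. Relation~(iii) is the ``contraction'' identity: one applies push--pull along the fibrewise diagonal $E\hookrightarrow E\times_{B\Tor^+(W_g)}E$, whose fibrewise normal bundle is $T_\pi E$ (so its Thom class integrates along a projection to $e(T_\pi E)$), together with the fact that $\sum_i a_i\otimes a_i^\#=\omega$ is the form dual to $\lambda$; this yields $\bar\kappa_{x\cdot y}(v\otimes w)$ plus error terms, whose denominators $\chi$ and $\chi^2$ come precisely from the discrepancy between $\tilde v\cup\tilde w$ and $\lambda(v,w)\cdot e(T_\pi E)/\chi$. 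Relation~(iv) is the corresponding self-contraction, using $\sum_{i=1}^{2g}\lambda(a_i,a_i^\#)=(-1)^n\cdot 2g=\chi-2$. Relation~(v) follows from the family signature theorem of Atiyah--Singer: the class $\kappa_{\cL_i}=\pi_!(\cL_i(T_\pi E))$ on $B\Diff^+(W_g)$ is pulled back from $BG'_g$ (the signature index bundle of $\pi$ being determined by the monodromy representation on $H^n(W_g)$), and hence restricts to zero on $B\Tor^+(W_g)$, since the composite $B\Tor^+(W_g)\to B\Diff^+(W_g)\to BG'_g$ is nullhomotopic; and $\bar\kappa_{\cL_i}=\kappa_{\cL_i}$ because these are the $r=0$ classes.

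It remains to show that (i)--(v) is a \emph{complete} set of relations. I would do this by putting monomials in the $\bar\kappa_c(v_1\otimes\cdots\otimes v_r)$ into a normal form --- using~(iv) to eliminate internal $a_i\otimes a_i^\#$-contractions, (iii) to rewrite products of partially contracted classes, and~(i) to order the entries, as in \cite[Section~5]{KR-WTorelli} for the disc case --- and then counting dimensions: the resulting spanning set has, in each fixed degree and for $g$ large, dimension no larger than that of $H^*(B\Tor^+(W_g);\bQ)$, which is already known additively as a $G'_g$-representation from \cite{KR-WTorelli} (and which was itself an input above); hence it is a basis, and there can be no further relations. As already indicated, I expect the genuine difficulty to be concentrated in the first paragraph: carrying the \emph{ring} structure through the $\SO(2n)$-bundle and the $W_g$-bundle, where the spectral sequences collapse but a priori leave multiplicative extension problems, with everything hinging on $\chi\neq0$.
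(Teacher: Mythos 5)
Your identification of the generators (the classes normalised by $\tfrac{1}{\chi}\pi_!(e\cdot\tilde v_i)=0$, which is exactly the class $\bar{\epsilon}$ of Section~\ref{sec:ModifiedMMM}) and of the source of relations (ii)--(iv) (push--pull along the fibrewise diagonal, i.e.\ Lemma~\ref{lem:ModifiedContractionFormula} and Proposition~\ref{prop:MCF}) matches the paper, as does the vanishing of $\kappa_{\cL_i}$ on the Torelli group. But your overall route is genuinely different. The paper does not descend directly through $B\Tor(W_g,D^{2n})\to B\Tor^+(W_g,*)\to B\Tor^+(W_g)$; it first computes the \emph{twisted} cohomology $H^*(B\Diff^{+}(W_g);\cH^{\otimes S})$ for all $S$ (Theorem~\ref{thm:TwistedCoh}, whose hard case uses the descent Lemma~\ref{lem:QuillenLemma} for a principal $K(W^\vee,n+1)$-bundle together with Schur--Weyl duality), then passes to the Torelli group via the degeneration of the $G'_g$-Serre spectral sequence (Borel vanishing plus the algebraicity theorem of \cite{KR-WAlg}), obtaining the coend description of Theorem~\ref{thm:TorelliCalc}, from which the presentation is extracted. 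Your route instead leans on the additive answer for $H^*(B\Tor^+(W_g);\bQ)$ already established in \cite{KR-WTorelli}; what the paper's route buys in exchange is Theorem~\ref{thm:TwistedCoh}(v) and Theorem~\ref{thm:TorelliCalc} themselves, which are of independent use (e.g.\ for the $2n=2$ discussion).

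Two steps of your plan need more care. First, your generation argument is circular as written: for $y\in H^*(B\Tor^+(W_g);\bQ)$ the tautology $y=\tfrac{1}{\chi}\pi_!(e\cdot\pi^*y)$ gives nothing until one knows that $H^*(E;\bQ)$ is generated as a \emph{ring} by the fibre classes $\tilde v_j$, $e$, $p_i$ together with classes already known to lie in the image of $\pi^*$; for that one needs the identity $\bar{\epsilon}=-\tfrac{1}{\chi}\kappa_{\epsilon e}(\mathrm{pr}_1,\Delta)$ and Proposition~\ref{prop:Pullback} to rewrite the section-dependent generators $\kappa_{\epsilon^a c}$ of $H^*(E;\bQ)$ as polynomials in the $\pi^*\bar{\kappa}$'s and the $\tilde v_j$'s. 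Second, and more substantively, the ``normal form plus dimension count'': relations (iii) and (iv) only allow you to rewrite products whose total tensor contains a copy of $\omega$, so the spanning set you arrive at is indexed by labelled partitions paired with \emph{traceless} tensors, and bounding its span against the known additive answer requires the stable decomposition of $H(g)^{\otimes r}$ under $\mathrm{Sp}$ or $\mathrm{O}$ --- this invariant theory is precisely what the coend over the downward Brauer category and the surjectivity of the map $\alpha$ in the paper's argument encode. This is fillable, but it is where the real content of the completeness statement lives, and your proposal treats it as bookkeeping.
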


As in \cite[Section 5.4]{KR-WTorelli} this is not the smallest possible presentation: it can be simplified by manipulating graphs; we leave the details to the interested reader.

For $2n=4$ or $2n=2$ there is still a map from the $\bQ$-algebra given by this presentation to $H^{*}(B\Tor^+(W_g);\bQ)$. If $2n=2$ then (in a stable range) this map is an isomorphism onto the maximal algebraic subrepresentation in degrees $\leq N$, assuming that $H^{*}(B\Tor^+(W_g);\bQ)$ is finite-dimensional in degrees $< N$ for all large enough $g$. This is known to hold for $N=2$ by work of Johnson \cite{JohnsonIII}.

\subsection{Outline}
The overall strategy is parallel to \cite{KR-WTorelli}. There we defined certain twisted Miller--Morita--Mumford classes and used them to describe the twisted cohomology groups $H^*(B\Diff^{+}(W_g, D^{2n}) ; \mathcal{H}^{\otimes s})$ in a stable range of degrees, where $\mathcal{H}$ is the local coefficient system corresponding to $H_n(W_g; \bQ)$ with the action by diffeomorphisms of $W_g$. This calculation was valid for $2n=2$ as well. Using that for $2n \geq 6$ the $G'_g$-representations $H^*(B\Tor^+(W_g, D^{2n});\bQ)$ extend to representations of the ambient algebraic group (namely $\mathrm{Sp}_{2g}$ or $\mathrm{O}_{g,g}$) by \cite{KR-WAlg}\footnote{In fact we did something more complicated in \cite{KR-WTorelli} because this algebraicity result was not known at the time, but please allow some narrative leeway.}, the argument was completed by establishing the degeneration of the Serre spectral sequence
$$E_2^{p,q} = H^p(G'_g ; H^q(B\Tor^+(W_g, D^{2n});\bQ) \otimes \mathcal{H}^{\otimes s}) \Longrightarrow H^{p+q}(B\Diff^{+}(W_g, D^{2n}) ; \mathcal{H}^{\otimes s})$$
using work of Borel, and then using a categorical form of Schur--Weyl duality to extract the structure of $H^*(B\Tor^+(W_g, D^{2n});\bQ)$ from the $H^{*}(B\Diff^{+}(W_g, D^{2n}) ; \mathcal{H}^{\otimes s})$ for all $s$'s and various structure maps between them.

The twisted Miller--Morita--Mumford classes may be defined on $B\Diff^{+}(W_g, *)$ too, but not on $B\Diff^{+}(W_g)$. Our first task will be to define so-called ``modified twisted Miller--Morita--Mumford classes'' in $H^{*}(B\Diff^{+}(W_g) ; \mathcal{H}^{\otimes s})$ and analyse their behaviour: it turns out that their behaviour is significantly more complicated than the unmodified version, though still understandable. We will then use them to describe the twisted cohomology groups $H^*(B\Diff^{+}(W_g) ; \mathcal{H}^{\otimes s})$ in a stable range of degrees. This description will be in terms of a certain vector space of graphs with vertices labelled by monomials in Euler and Pontrjagin classes, which play the role here of the vector spaces of labelled partitions from \cite{KR-WTorelli}. The passage from this calculation to $H^*(B\Tor^{+}(W_g);\bQ)$ is as above.

The case of dimension $2n=2$ is somewhat special, in precisely the same way as it was in \cite{KR-WTorelli}: the calculation of $H^*(B\Diff^{+}(W_g) ; \mathcal{H}^{\otimes s})$ is valid in this case, but as the cohomology of $B\Tor^+(W_g)$ is not even known to be finite-dimensional in a stable range, we cannot make a conclusion about it. (Instead one can make a conclusion about the continuous cohomology of the Torelli group, i.e.\ the Lie algebra cohomology of its Mal'cev Lie algebra: see \cite{KR-WKoszul}, \cite{FNW}, \cite{HainJohnson}.)  In addition, in this case our modified twisted Miller--Morita--Mumford classes are essentially the same as those that have been defined by Kawazumi and Morita \cite{MoritaLinearRep, KM, KMunpub}, and the graphical calculus that we employ is similar to theirs. In Section \ref{sec:surfaces} we fully explain this connection, and also relate it to work of Garoufalidis and Nakamura \cite{GN, GNCorr} and Akazawa \cite{Akazawa}.

To avoid a great deal of repetition we have refrained from spelling out a lot of the background that was given in \cite{KR-WTorelli}, and from giving in detail arguments that are very similar to those given there. As such this paper should not be considered as attempting to be self-contained: given that its interest will be to readers of \cite{KR-WTorelli} this should not present a problem.

\subsection{Acknowledgements} I am grateful to Alexander Kupers for feedback on an earlier draft. I was supported by the ERC under the European Union’s Horizon 2020 research and innovation programme (grant agreement No.\ 756444) and by a Philip Leverhulme Prize from the Leverhulme Trust.

\section{Characteristic classes}

\subsection{Recollection on twisted Miller--Morita--Mumford classes.} 

If $\pi' : E' \to X'$ is an oriented smooth $W_g^{2n}$-bundle equipped with a section $s : X' \to E'$, and $\cH$ denotes the local coefficient system $x \mapsto H_n((\pi')^{-1}(x);\bQ)$ on $X'$, then it is explained in \cite[Section 3.2]{KR-WTorelli} that there is a unique class $\epsilon = \epsilon_s \in H^n(E' ; \cH)$ characterised by
\begin{enumerate}[(i)]
\item for each $x \in X'$ the element $\epsilon\vert_{(\pi')^{-1}(x)} \in H^n((\pi')^{-1}(x);\bQ) \otimes H_n((\pi')^{-1}(x);\bQ)$ is coevaluation, and

\item $s^* \epsilon=0$.
\end{enumerate}
The proof is as follows. The Serre spectral sequence yields an exact sequence
$$0\to H^n(X';\cH) \overset{(\pi')^*}\to H^n(E';\cH) \to H^0(X' ; \cH^\vee \otimes \cH) \overset{d_{n+1}}\to H^{n+1}(X';\cH) \overset{(\pi')^*}\to H^{n+1}(E';\cH)$$
and the section $s$ shows that the right-hand map $(\pi')^*$ is injective, so that the map $d_{n+1}$ is zero, and splits the left-hand map $(\pi')^*$. The class $\text{coev} \in H^0(X' ; \cH^\vee \otimes \cH)$ then gives rise to a unique $\epsilon$ satisfying the given properties. 

We then defined the twisted Miller--Morita--Mumford class
\begin{equation}\label{eq:ordinaryMMM}
\kappa_{\epsilon^a c} = \kappa_{\epsilon^a c}(\pi', s) := \pi'_!( \epsilon^a \cdot c(T_{\pi'} E')) \in H^{(a-2)n + |c|}(X';\cH^{\otimes a}).
\end{equation}

\subsection{Modified twisted Miller--Morita--Mumford classes}\label{sec:ModifiedMMM} 

If $\pi: E \to X$ is an oriented smooth $W_g^{2n}$-bundle but is not equipped with a section then, as long as $\chi := \chi(W_g) = 2 + (-1)^n 2g \neq 0$ (i.e.\ $(n,g) \neq (\text{odd}, 1)$, cf.\ Remark \ref{rem:ChiIsZero}), the cohomological role of the section can  instead be played by the \emph{transfer map}
$$\tfrac{1}{\chi}\pi_!(e \cdot -) : H^*(E;\cH) \lra H^*(X;\cH),$$
where $e := e(T_\pi E) \in H^{2n}(E;\bQ)$ denotes the Euler class of the vertical tangent bundle. The projection formula
$$\tfrac{1}{\chi}\pi_!(e \cdot \pi^*(x)) = \tfrac{1}{\chi}\pi_!(e) \cdot x = \tfrac{\chi}{\chi} x = x$$
shows that this map splits $\pi^*$. Thus in this situation there is a unique class $\bar{\epsilon} \in H^n(E ; \cH)$ characterised by
\begin{enumerate}[(i)]
\item for each $x \in X$ the element $\bar{\epsilon}\vert_{\pi^{-1}(x)} \in H^n(\pi^{-1}(x);\bQ) \otimes H_n(\pi^{-1}(x);\bQ)$ is coevaluation, and

\item $\tfrac{1}{\chi}\pi_!(e \cdot \bar{\epsilon})=0$.
\end{enumerate}
\begin{remark}\label{rem:ChiIsZero}
If $(n,g)=(\text{odd}, 1)$ then there is no class $\bar{\epsilon} \in H^n(E;\mathcal{H})$ satisfying (i) and natural under pullback. To see this it suffices to give one example of  a smooth oriented $W_1$-bundle for which $\bar{\epsilon}$ does not exist. Consider the Borel construction for the evident action of $S^1 \times S^1$ on $W_1 = S^n \times S^n$ given by considering $S^n$ as the unit sphere in $\bC^{(n+1)/2}$. This gives a smoth oriented $W_1$-bundle over $B(S^1 \times S^1)$ with total space $E \simeq \mathbb{CP}^{(n-1)/2} \times \mathbb{CP}^{(n-1)/2}$. Thus $H^n(E;\mathcal{H})=0$ as $n$ is odd but $E$ has a cell structure with only even-dimensional cells.
\end{remark}

By analogy with \eqref{eq:ordinaryMMM} we may then define the modified twisted Miller--Morita--Mumford class
\begin{equation}\label{eq:barMMM}
\kappa_{\bar{\epsilon}^a c} = \kappa_{\bar{\epsilon}^a c}(\pi) := \pi_!( \bar{\epsilon}^a \cdot c(T_{\pi} E) ) \in H^{(a-2)n + |c|}(X;\cH^{\otimes a}).
\end{equation}

If $\pi: E \to X$ does have a section $s : X \to E$ then the class $\epsilon \in H^n(E;\cH)$ is also defined, and we may compare it with $\bar{\epsilon}$ as follows:

\begin{lemma}\label{lem:Compare}
If $\pi : E \to X$ has a section then $\bar{\epsilon} = \epsilon - \tfrac{1}{\chi} \pi^*\kappa_{\epsilon e}$.
\end{lemma}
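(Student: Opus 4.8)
The plan is to use that $\epsilon$ and $\bar\epsilon$ satisfy the same fibrewise normalisation (i), so that their difference is pulled back from the base, and then to pin down that pulled-back class by feeding the resulting equation into the transfer $\tfrac1\chi\pi_!(e\cdot-)$.

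First I would invoke the exact sequence coming from the Serre spectral sequence --- the one displayed above for $\pi':E'\to X'$, now applied to our bundle $\pi:E\to X$, which does have a section --- to conclude that $\ker\!\big(H^n(E;\cH)\to H^0(X;\cH^\vee\otimes\cH)\big)$ is exactly the image of $\pi^*$, and that $\pi^*$ is injective (split by $s^*$, or equally by the transfer). Since $\epsilon$ and $\bar\epsilon$ both restrict on every fibre to coevaluation, they have the same image in $H^0(X;\cH^\vee\otimes\cH)$, so $\bar\epsilon-\epsilon=\pi^*(y)$ for a unique $y\in H^n(X;\cH)$.

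Next I would determine $y$ by applying $\tfrac1\chi\pi_!(e\cdot-)$ to the equation $\bar\epsilon=\epsilon+\pi^*(y)$. The left-hand side vanishes by property (ii) of $\bar\epsilon$; on the right, the projection formula together with $\pi_!(e)=\chi$ gives $\tfrac1\chi\pi_!(e\cdot\pi^*y)=y$, so $0=\tfrac1\chi\pi_!(e\cdot\epsilon)+y$. Finally $\pi_!(e\cdot\epsilon)=\pi_!\big(\epsilon\cdot e(T_\pi E)\big)=\kappa_{\epsilon e}$ by the definition \eqref{eq:ordinaryMMM} with $a=1$ and $c=e$, whence $y=-\tfrac1\chi\kappa_{\epsilon e}$ and $\bar\epsilon=\epsilon-\tfrac1\chi\pi^*\kappa_{\epsilon e}$, as claimed.

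I do not expect a genuine obstacle here; the care needed is purely bookkeeping: applying the exact sequence and the splitting to the sectioned bundle $\pi$ rather than to the auxiliary $\pi'$ of the recollection; using that $\chi\neq0$ so that the transfer is defined (our standing hypothesis); and noting that any class in $\IM(\pi^*)$ restricts to zero on each fibre (since $H^{\geq1}(\mathrm{pt};-)=0$), which is what makes the fibrewise conditions on $\epsilon$ and $\bar\epsilon$ directly comparable.
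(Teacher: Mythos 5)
Your proposal is correct and follows essentially the same route as the paper: both restrict to coevaluation on fibres so the difference is $\pi^*$ of a class on the base, which is then identified by applying the transfer $\tfrac{1}{\chi}\pi_!(e\cdot -)$ and using property (ii) of $\bar\epsilon$ together with the projection formula. The only cosmetic difference is that you spell out the Serre spectral sequence exact sequence in more detail, and you implicitly use (as the paper notes explicitly) that $e$ has even degree to commute it past $\epsilon$.
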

\begin{proof}
The classes $\epsilon, \bar{\epsilon} \in H^n(E;\cH)$ are both defined, and agree when restricted to the fibres of the map $\pi$, so by considering the Serre spectral sequence for $\pi$ we must have $\bar{\epsilon}-\epsilon = \pi^*(x)$ for some class $x \in H^n(X;\cH)$. Applying $\tfrac{1}{\chi}\pi_!(e \cdot -)$ we see that $x =\tfrac{1}{\chi} \pi_!(e \cdot (\bar{\epsilon}-\epsilon))= 0 - \tfrac{1}{\chi} \pi_!(e \cdot \epsilon) = - \tfrac{1}{\chi} \kappa_{\epsilon e}$. (Here we have used, as we often will, the fact that $e$ has even degree to commute it past $\epsilon$.)
\end{proof}

\begin{remark}[Splitting principle]\label{rem:split}
The pullback
\begin{equation}\label{eq:Pullback}
\begin{tikzcd}
E_1 \times_X E_2 \dar{\mathrm{pr}_1} \rar{\mathrm{pr}_2} & E_2 \dar{\pi_2}\\
 E_1 \rar{\pi_1}& X,
\end{tikzcd}
\end{equation}
where $\pi_i : E_i \to X$ are copies of the map $\pi$, is equipped with a section given by the diagonal map $\Delta : E_1 \to E_1 \times_X E_2$. As the maps $\pi_1^*$ and $\mathrm{pr}_2^*$ are injective (they are split by their corresponding transfer maps), for the purpose of establishing identities between the characteristic classes we have discussed it suffices to do so for bundles which do have a section.
\end{remark}

There is another description of $\bar{\epsilon}$ which is sometimes useful. Let $\mathrm{pr}_1 : E \times_X E \to E$ be as in \eqref{eq:Pullback}, which is an oriented $W_g$-bundle with section given by the diagonal map $\Delta$, and so has the class $\kappa_{\epsilon e}(\mathrm{pr}_1, \Delta)$ defined.

\begin{lemma}
We have $\bar{\epsilon} = - \tfrac{1}{\chi} \kappa_{\epsilon e}(\mathrm{pr}_1, \Delta) \in H^n(E;\cH)$.
\end{lemma}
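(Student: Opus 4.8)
The plan is to combine the comparison of Lemma~\ref{lem:Compare} with the splitting principle of Remark~\ref{rem:split}. Both $\bar{\epsilon}$ and $-\tfrac{1}{\chi}\kappa_{\epsilon e}(\mathrm{pr}_1, \Delta)$ are characteristic classes of $\pi$ landing in $H^n(E;\cH)$, and both are natural under pullback of $W_g$-bundles; moreover $\mathrm{pr}_1 : E \times_X E \to E$ is the pullback of $\pi$ along $\pi$ (with bundle map $\mathrm{pr}_2$) and carries the section $\Delta$, so since $\mathrm{pr}_2^*$ is injective it suffices to prove the identity for bundles $\pi : E \to X$ admitting a section $s : X \to E$. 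Assume this from now on, write $e := e(T_\pi E)$ and $\epsilon := \epsilon_s$, and recall from Lemma~\ref{lem:Compare} that $\bar{\epsilon} = \epsilon - \tfrac{1}{\chi}\pi^*\kappa_{\epsilon e}$. It therefore suffices to prove
\[
\kappa_{\epsilon e}(\mathrm{pr}_1, \Delta) = \pi^*\kappa_{\epsilon e} - \chi\,\epsilon .
\]

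The first step is to identify the class $\epsilon(\mathrm{pr}_1, \Delta)$: I claim it equals $\mathrm{pr}_2^*\epsilon - \mathrm{pr}_1^*\epsilon$. By the uniqueness in the characterisation of $\epsilon$ for the bundle $\mathrm{pr}_1$ with section $\Delta$, it is enough to verify the two defining properties. Restricted to a fibre of $\mathrm{pr}_1$, the map $\mathrm{pr}_2$ is the canonical identification of that fibre with a fibre of $\pi$, on which $\epsilon$ restricts to the coevaluation, whereas $\mathrm{pr}_1$ is constant there, so $\mathrm{pr}_1^*\epsilon$ restricts to a class pulled back from a point and hence vanishes (as $n>0$); thus $\mathrm{pr}_2^*\epsilon - \mathrm{pr}_1^*\epsilon$ restricts to the coevaluation on each fibre. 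And since $\mathrm{pr}_1\circ\Delta = \mathrm{pr}_2\circ\Delta = \id_E$ we get $\Delta^*(\mathrm{pr}_2^*\epsilon - \mathrm{pr}_1^*\epsilon) = \epsilon - \epsilon = 0$. (Alternatively: $\mathrm{pr}_1$ also carries the section $\sigma := (\id_E, s\circ\pi)$, which is the pullback of $s$, so $\epsilon(\mathrm{pr}_1, \sigma) = \mathrm{pr}_2^*\epsilon$ by naturality, and the $\epsilon$-classes for two sections differ by $\mathrm{pr}_1^*$ of a class on $E$, which one pins down by applying $\Delta^*$; this gives the same formula.)

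Now substitute, using that the vertical tangent bundle of $\mathrm{pr}_1$ is $\mathrm{pr}_2^*(T_\pi E)$, so $e(T_{\mathrm{pr}_1}(E\times_X E)) = \mathrm{pr}_2^* e$:
\[
\kappa_{\epsilon e}(\mathrm{pr}_1, \Delta) = (\mathrm{pr}_1)_!\bigl((\mathrm{pr}_2^*\epsilon - \mathrm{pr}_1^*\epsilon)\cdot \mathrm{pr}_2^* e\bigr) = (\mathrm{pr}_1)_!\mathrm{pr}_2^*(\epsilon\cdot e) - (\mathrm{pr}_1)_!\bigl(\mathrm{pr}_1^*\epsilon\cdot \mathrm{pr}_2^* e\bigr).
\]
For the first term, base change along the cartesian square \eqref{eq:Pullback} gives $(\mathrm{pr}_1)_!\mathrm{pr}_2^*(\epsilon\cdot e) = \pi^*\pi_!(\epsilon\cdot e) = \pi^*\kappa_{\epsilon e}$. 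For the second, the projection formula followed by base change gives $(\mathrm{pr}_1)_!(\mathrm{pr}_1^*\epsilon\cdot \mathrm{pr}_2^* e) = \epsilon\cdot(\mathrm{pr}_1)_!\mathrm{pr}_2^* e = \epsilon\cdot\pi^*\pi_!(e) = \chi\,\epsilon$, using $\pi_!(e(T_\pi E)) = \chi(W_g) = \chi$. This is the displayed identity, and combined with Lemma~\ref{lem:Compare} it yields $-\tfrac{1}{\chi}\kappa_{\epsilon e}(\mathrm{pr}_1, \Delta) = \epsilon - \tfrac{1}{\chi}\pi^*\kappa_{\epsilon e} = \bar{\epsilon}$.

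The substitution step is routine bookkeeping with the projection formula and the base-change formula for fibre integration, so there is no serious obstacle; the points that deserve care are the reduction to the sectioned case (one must remember that the identity is a statement about the total space $E$, so $\mathrm{pr}_1 : E\times_X E\to E$ has to be exhibited as a pullback of $\pi$ before the injectivity of $\mathrm{pr}_2^*$ can be invoked), the identification of $\epsilon(\mathrm{pr}_1, \Delta)$ above, and keeping straight that $T_{\mathrm{pr}_1}$ is pulled back along $\mathrm{pr}_2$ while the section of $\mathrm{pr}_1$ that pulls back from $s$ is $\sigma$, not $\Delta$.
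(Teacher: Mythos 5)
Your proposal is correct and follows essentially the same route as the paper's proof: reduce to the sectioned case via the splitting principle, identify $\epsilon_\Delta = \mathrm{pr}_2^*\epsilon - \mathrm{pr}_1^*\epsilon$, compute $(\mathrm{pr}_1)_!$ of the two resulting terms by base change and the projection formula, and conclude with Lemma \ref{lem:Compare}. The only cosmetic difference is that you verify the two characterising properties of $\epsilon_\Delta$ directly rather than first writing the difference as $\mathrm{pr}_1^*(x)$ and pinning down $x$ with $\Delta^*$ (which you note as an alternative, and which is what the paper does).
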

\begin{proof}
By Remark \ref{rem:split} we may suppose without loss of generality that $\pi: E \to X$ has a section $s : X \to E$, defining a class $\epsilon = \epsilon_s \in H^n(E;\cH)$. Consider the pullback square \eqref{eq:Pullback}; let $e_i = (\mathrm{pr}_i)^*(e) \in H^{2n}(E_1 \times_X E_2;\bQ)$ be the Euler class of the vertical tangent bundle on the $i$th factor. Considering $\mathrm{pr}_1$ as a $W_g$-bundle with section given by the diagonal map $\Delta$, there is a class $\epsilon_\Delta \in H^n(E_1 \times_X E_2 ; \cH)$ defined. As both $\epsilon_\Delta$ and $\mathrm{pr}_2^*(\epsilon_s)$ restrict to coevaluation on the fibres of $\mathrm{pr}_1$, we have $\epsilon_\Delta - \mathrm{pr}_2^*(\epsilon_s) = \mathrm{pr}_1^*(x)$ for some class $x \in H^n(E_1;\cH)$. Pulling this equation back along $\Delta$ shows that $x = -\epsilon_s$, so $\epsilon_\Delta = \mathrm{pr}_2^*(\epsilon_s) - \mathrm{pr}_1^*(\epsilon_s)$. Then we have
\begin{align*}
\kappa_{\epsilon e}(\mathrm{pr}_1, \Delta) &= (\mathrm{pr}_1)_!(\epsilon_\Delta \cdot e_2)\\
&= (\mathrm{pr}_1)_!((\mathrm{pr}_2^*(\epsilon_s) - \mathrm{pr}_1^*(\epsilon_s)) \cdot e_2)\\
&= (\mathrm{pr}_1)_!(\mathrm{pr}_2^*(\epsilon_s \cdot e)) - (\mathrm{pr}_1)_!(\mathrm{pr}_1^*(\epsilon_s) \cdot e_2)\\
&= \pi_1^* (\pi_2)_!(\epsilon_s \cdot e) - \chi \epsilon_s\\
&= \pi_1^*\kappa_{\epsilon e}(\pi,s) - \chi \epsilon_s\\
&= - \chi \bar{\epsilon}
\end{align*}
as required.
\end{proof}

The intersection form of the fibres of $\pi : E \to X$ provides a map of local coefficient systems $\lambda : \mathcal{H} \otimes \mathcal{H} \to \bQ$; as we will often be concerned with applying it to two factors of a tensor power $\mathcal{H}^{\otimes k}$ and will have to specify which factors we apply it to, we will denote $\lambda$ by $\lambda_{1,2}$ and more generally write $\lambda_{i,j} : \mathcal{H}^{\otimes k} \to \mathcal{H}^{\otimes k-2}$ for the map that applies $\lambda$ to the $i$th and $j$th factors. We call such operations \emph{contraction}.

If $p : E_1 \times_X E_2 \to X$ denotes the fibre product of two copies of $\pi : E \to X$, and if this has a section $s : X \to E$, then in \cite[Lemma 3.9]{KR-WTorelli} we have established the formula
\begin{equation}\label{eq:Lemma3pt9}
\lambda_{1,2}({\epsilon} \times {\epsilon}) = \Delta_!(1) - 1 \times v - v \times 1 + p^* s^* e \in H^{2n}(E_1 \times_X E_2;\bQ),
\end{equation}
where $v = s_!(1) \in H^{2n}(E;\bQ)$ is the fibrewise Poincar{\'e} dual to the section $s$, cf.\ \cite[Lemma 3.1]{KR-WTorelli}. The analogue of this formula for $\bar{\epsilon}$ is as follows.

\begin{lemma}\label{lem:ModifiedContractionFormula}
We have 
$$\lambda_{1,2}(\bar{\epsilon} \times \bar{\epsilon}) = \Delta_!(1)  + \tfrac{1}{\chi^2} p^*\kappa_{e^2} - \tfrac{1}{\chi}(e \times 1 + 1 \times e) \in H^{2n}(E_1 \times_X E_2 ; \bQ).$$
\end{lemma}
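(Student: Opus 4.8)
The plan is to use the splitting principle of Remark \ref{rem:split} to reduce to the case of a bundle $\pi : E \to X$ equipped with a section $s$, so that the unmodified class $\epsilon = \epsilon_s$ is defined and we may substitute the comparison formula of Lemma \ref{lem:Compare}, namely $\bar{\epsilon} = \epsilon - \tfrac{1}{\chi}\pi^*\kappa_{\epsilon e}$. Writing $x := \tfrac{1}{\chi}\kappa_{\epsilon e} \in H^n(X;\cH)$, we have $\bar{\epsilon} = \epsilon - \pi^*x$, so $\bar{\epsilon} \times \bar{\epsilon} = \epsilon \times \epsilon - \epsilon \times p^*x - p^*x \times \epsilon + p^*(x \otimes x)$ inside $H^{2n}(E_1 \times_X E_2; \cH^{\otimes 2})$ (being careful that $x$ has degree $n$, so all the sign bookkeeping from commuting factors is governed by parity of $n$, but the eventual identity is in untwisted cohomology where these signs will wash out).

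Next I would apply $\lambda_{1,2}$ term by term. The first term $\lambda_{1,2}(\epsilon \times \epsilon)$ is given by \eqref{eq:Lemma3pt9} as $\Delta_!(1) - 1 \times v - v \times 1 + p^*s^*e$. For the cross terms, $\lambda_{1,2}$ pairs one copy of $\cH$ coming from an $\epsilon$ with the copy coming from $p^*x$; since $x = \tfrac{1}{\chi}\pi_!(\epsilon \cdot e)$ and contracting $\epsilon$ against $\cH$ recovers the defining coevaluation property (the first characterising property of $\epsilon$), the contraction $\lambda_{1,2}(\epsilon \times p^*x)$ should collapse to a pullback of a characteristic class on $X$ — concretely I expect it to produce $\tfrac{1}{\chi}p^*\pi_!(e) \cdot (\text{something}) $ via the identity $\lambda(\epsilon, \pi^*(-)) $ applied fibrewise, ultimately giving $p^*(v')$ or a multiple of $p^*e$-type terms, and similarly for $p^*x \times \epsilon$. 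The last term $\lambda_{1,2}(p^*(x \otimes x)) = p^*(\lambda(x \otimes x))$ is a pullback of $\tfrac{1}{\chi^2}\lambda(\kappa_{\epsilon e} \otimes \kappa_{\epsilon e})$, which by a contraction identity for twisted MMM classes (of the type in \cite[Section 3]{KR-WTorelli}) should equal $\tfrac{1}{\chi^2}\kappa_{e^2}$ up to correction terms. Finally I would also need to re-express $v = s_!(1)$ and $p^*s^*e$ in terms of $e$ alone, using that on a bundle with section the various fibrewise-dual and Euler classes are related; the point is that in the end, after the splitting principle is reversed, only $\Delta_!(1)$, $p^*\kappa_{e^2}$, $e \times 1$ and $1 \times e$ can survive (these are the section-independent classes), with the stated coefficients $1$, $\tfrac{1}{\chi^2}$, $-\tfrac{1}{\chi}$, $-\tfrac{1}{\chi}$.

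The main obstacle I anticipate is the careful evaluation of the cross terms $\lambda_{1,2}(\epsilon \times p^*x)$ and the final-term contraction $\lambda(\kappa_{\epsilon e} \otimes \kappa_{\epsilon e})$: these require correctly combining the coevaluation property of $\epsilon$ with the projection formula and the push-pull formula for $\pi_!$, and keeping track of which tensor factor of $\cH^{\otimes}$ each contraction acts on. In particular one must verify that the $-1 \times v - v \times 1 + p^*s^*e$ terms from \eqref{eq:Lemma3pt9} are exactly cancelled (after reversing the splitting principle, i.e.\ after these section-dependent quantities are forced to become section-independent combinations) by the contributions of the cross terms, leaving behind only the $-\tfrac{1}{\chi}(e \times 1 + 1 \times e)$ correction. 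A cleaner alternative, which I would pursue in parallel, is to avoid \eqref{eq:Lemma3pt9} entirely: instead compute $\lambda_{1,2}(\bar{\epsilon} \times \bar{\epsilon})$ directly from the two characterising properties of $\bar{\epsilon}$. Since $\bar{\epsilon}$ restricts to coevaluation on fibres, $\lambda_{1,2}(\bar{\epsilon} \times \bar{\epsilon})$ restricts on each fibre of $p$ to the fibrewise diagonal class minus Euler-class corrections — matching $\Delta_!(1)$ minus a multiple of $e$ fibrewise — so $\lambda_{1,2}(\bar{\epsilon}\times\bar\epsilon) - \Delta_!(1) + \tfrac1\chi(e\times1+1\times e)$ lies in $p^*H^{2n}(X;\bQ)$, and one pins down the pullback class by applying a transfer (e.g.\ $\tfrac{1}{\chi^2}$ times the composite of the two fibrewise integrations $e_1 \cdot e_2 \cdot (-)$) together with property (ii) of $\bar\epsilon$, which forces the remaining term to be $\tfrac{1}{\chi^2}p^*\kappa_{e^2}$. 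I expect this second route to be shorter and to be the one worth writing up.
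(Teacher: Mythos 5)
Your first route is the paper's route in outline (splitting principle, substitute Lemma \ref{lem:Compare}, expand into four terms, evaluate the first via \eqref{eq:Lemma3pt9} and the last via the contraction formula of \cite[Proposition 3.10]{KR-WTorelli}), but the step you defer --- the cross terms --- is where essentially all the work lies, and your guess about their shape is wrong. The class $\lambda_{1,2}(\epsilon \times \tfrac{1}{\chi}\pi^*\kappa_{\epsilon e})$ does \emph{not} collapse to a pullback from $X$: the paper computes it by recognising $\epsilon \cdot \kappa_{\epsilon e}$ as $(\mathrm{pr}_1)_!\bigl((\epsilon\times\epsilon)\cdot(1\times e)\bigr)$ and applying \eqref{eq:Lemma3pt9} again, obtaining $\tfrac{1}{\chi}e\times 1 + \tfrac{\chi-1}{\chi}p^*s^*e - v\times 1$. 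The terms $e\times 1$ and $v\times 1$ are not pulled back from $X$, and they are essential: the $-v\times 1$ cancels the $-v\times 1$ coming from \eqref{eq:Lemma3pt9} term-by-term (not by any ``section-independence after reversing the splitting principle'' argument), and the $\tfrac{1}{\chi}e\times 1$ is exactly the surviving correction in the statement. Without this computation the proof is not there, and the heuristic you propose in its place would lead you to the wrong cancellation pattern.

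Your ``cleaner alternative'' has a genuine gap at the step ``\ldots so $\lambda_{1,2}(\bar{\epsilon}\times\bar{\epsilon}) - \Delta_!(1) + \tfrac{1}{\chi}(e\times 1 + 1\times e)$ lies in $p^*H^{2n}(X;\bQ)$.'' Vanishing on the fibres of $p$ only kills the component in $E_\infty^{0,2n}$ of the Serre spectral sequence for $p : E_1\times_X E_2 \to X$; since the fibre $W_g\times W_g$ has cohomology in degree $n$, there is a middle graded piece $H^n(X; H^n(W_g\times W_g;\bQ))$ in filtration $n$ of $H^{2n}(E_1\times_X E_2;\bQ)$ which fibrewise restriction cannot see. (This is precisely the point at which the analogous argument in the proof of Lemma \ref{lem:Compare} \emph{does} work, because there the intermediate graded pieces vanish for degree reasons.) To rescue the argument you would need to kill that middle component separately, e.g.\ by applying the partial transfers $\tfrac{1}{\chi}(\mathrm{pr}_i)_!(e_i\cdot -)$ and invoking property (ii) of $\bar{\epsilon}$; you would also need to justify that the fibrewise restriction of $\lambda_{1,2}(\bar{\epsilon}\times\bar{\epsilon})$ agrees with that of $\Delta_!(1) - \tfrac{1}{\chi}(e\times 1 + 1\times e)$, which is itself the fibrewise content of \eqref{eq:Lemma3pt9}. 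Your final step --- applying $\tfrac{1}{\chi^2}p_!(e_1e_2\cdot-)$ and property (ii) to identify the pullback term as $\tfrac{1}{\chi^2}p^*\kappa_{e^2}$ --- is correct once the pullback claim is established, but as written the alternative route is not a proof.
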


\begin{proof}
As in Remark \ref{rem:split} we may suppose without loss of generality that $\pi: E \to X$ has a section $s : X \to E$, so that $\epsilon \in H^n(E;\cH)$ is defined.

By Lemma \ref{lem:Compare} we have $\bar{\epsilon} = \epsilon - \tfrac{1}{\chi} \pi^*\kappa_{\epsilon e} \in H^n(E;\cH)$, and so
\begin{align*}
\lambda_{1,2}(\bar{\epsilon} \times \bar{\epsilon}) &= \lambda_{1,2}((\epsilon - \tfrac{1}{\chi} \pi^*\kappa_{e \cdot \epsilon}) \times (\epsilon - \tfrac{1}{\chi} \pi^*\kappa_{e \cdot \epsilon}))\\
&= \lambda_{1,2}(\epsilon \times \epsilon) - \lambda_{1,2}(\tfrac{1}{\chi} \pi^*\kappa_{\epsilon e} \times \epsilon)\\
&\quad\quad -\lambda_{1,2}(\epsilon \times \tfrac{1}{\chi} \pi^*\kappa_{\epsilon e}) + \lambda_{1,2}(\tfrac{1}{\chi} \pi^*\kappa_{\epsilon e} \times \tfrac{1}{\chi} \pi^*\kappa_{\epsilon e}).
\end{align*}
The first term is given by \eqref{eq:Lemma3pt9}, and using \cite[Proposition 3.10]{KR-WTorelli} the last term is given by
$$\lambda_{1,2}(\tfrac{1}{\chi} \pi^*\kappa_{\epsilon e} \times  \tfrac{1}{\chi} \pi^*\kappa_{\epsilon  e}) = \tfrac{1}{\chi^2}p^*\lambda_{1,2}(\kappa_{\epsilon  e} \cdot \kappa_{\epsilon  e}) = \tfrac{1}{\chi^2}p^*(\kappa_{e^2} + (\chi^2-2\chi) s^*e ).$$

For the middle two terms, note that 
$$\epsilon \times \tfrac{1}{\chi} \pi^*\kappa_{ \epsilon e} = \tfrac{1}{\chi}(\epsilon \times 1) \cdot p^*(\kappa_{e \cdot \epsilon}) = \tfrac{1}{\chi}(\epsilon \cdot  \pi^*\kappa_{\epsilon e}) \times 1$$
so we need to calculate $\lambda_{1,2}(\epsilon \cdot  \pi^*\kappa_{ \epsilon e}) \in H^{2n}(E;\bQ)$. The class $\epsilon \cdot  \kappa_{\epsilon e}$ is the fibre integral along $\mathrm{pr}_1 : E_1 \times_{X} E_2 \to E_1$ of $\epsilon \times (\epsilon \cdot e) =  (\epsilon \times \epsilon) \cdot (1 \times e)$, so 
\begin{align*}
\lambda_{1,2}(\epsilon \cdot  \kappa_{\epsilon e}) &=
(\mathrm{pr}_1)_!(\lambda_{1,2}(\epsilon \times \epsilon) \cdot (1 \times e))\\
 &= (\mathrm{pr}_1)_!((\Delta_!(1) - 1 \times v - v \times 1 + p^*s^*e) \cdot (1 \times e))\\
&= e - \pi^*s^*e - \chi v + \chi \pi^*s^*e
\end{align*}
and hence
\begin{align*}
\lambda_{1,2}(\epsilon \times \tfrac{1}{\chi} \kappa_{\epsilon e}) &= \tfrac{1}{\chi} (e - \pi^*s^*e - \chi v + \chi \pi^*s^*e) \times 1 \\
&= \tfrac{1}{\chi}e \times 1 + \tfrac{\chi-1}{\chi}p^*s^* e - v \times 1
\end{align*}
and similarly
$$\lambda_{1,2}( \tfrac{1}{\chi} \kappa_{\epsilon e} \times \epsilon) = \tfrac{1}{\chi}1 \times e + \tfrac{\chi-1}{\chi}p^*s^* e - 1 \times v.$$

Combining these gives
\begin{align*}
\lambda_{1,2}(\bar{\epsilon} \times \bar{\epsilon}) &= \Delta_!(1) - 1 \times v - v \times 1 + p^*s^* e\\
&\quad\quad+ \tfrac{1}{\chi^2} p^*\kappa_{e^2} + \tfrac{\chi-2}{\chi}p^*s^*e\\
&\quad\quad- (\tfrac{1}{\chi}e \times 1 + \tfrac{\chi-1}{\chi}p^*s^* e - v \times 1)\\
&\quad\quad-(\tfrac{1}{\chi}1 \times e + \tfrac{\chi-1}{\chi}p^*s^* e - 1 \times v)\\
&= \Delta_!(1)  + \tfrac{1}{\chi^2} p^*\kappa_{e^2} - \tfrac{1}{\chi}(e \times 1 + 1 \times e)
\end{align*}
as required.
\end{proof}

If in addition we have a lift $\ell : E \to B$ of the fibrewise Gauss map along some fibration $\theta : B \to B\SO(2n)$ then for any $c \in H^*(B;\bQ)$ we can define modified twisted Miller--Morita--Mumford classes by the formula 
$$\kappa_{\bar{\epsilon}^a c} := \pi_!(\bar{\epsilon}^a \cdot \ell^*c) \in H^{n  (a-2) + |c|}(X ; \cH^{\otimes a}).$$
Under the action of a permutation $\sigma \in \mathfrak{S}_a$ of the tensor factors these classes transform as $\mathrm{sign}(\sigma)^n$, as $\bar{\epsilon}$ has degree $n$. Thus for any finite set $S$ there is a well-defined element
\begin{equation}\label{eq:Unorderedkappa}
\kappa_{\bar{\epsilon}^S c} := \pi_!(\bar{\epsilon}^a \cdot \ell^*c) \in H^{n  (a-2) + |c|}(X ; \cH^{\otimes S}) \otimes (\det \bQ^S)^{\otimes n}.
\end{equation}
To keep track of signs, for an ordered set $S = \{s_1 < s_2 < \cdots < s_a\}$  we will often write $\kappa_{\bar{\epsilon}^{s_1, \ldots, s_a} c} \in H^{n  (a-2) + |c|}(X ; \cH^{\otimes S})$ for the corresponding element, understanding that if $\sigma$ is a reordering of $S$ then $\kappa_{\bar{\epsilon}^{\sigma(s_1), \ldots, \sigma(s_a)} c} = \mathrm{sign}(\sigma)^n\kappa_{\bar{\epsilon}^{s_1, \ldots, s_a} c}$.

Using Lemma \ref{lem:ModifiedContractionFormula} we immediately see that these characteristic classes satisfy the following analogue of the contraction formula from \cite[Proposition 3.10]{KR-WTorelli}.

\begin{proposition}[Modified contraction formula]\label{prop:MCF}
In $H^*(X ; \cH^{\otimes -})$ we have the identities
\begin{align*}
\lambda_{1,2}(\pi_!(\bar{\epsilon}^{1,2,\ldots,a} \cdot \ell^*c)) &= (\tfrac{\chi-2}{\chi})\pi_!(\bar{\epsilon}^{3,4,\ldots,a} \cdot \ell^*(e \cdot c)) + \tfrac{1}{\chi^2}\kappa_{e^2} \cdot \pi_!(\bar{\epsilon}^{3,4,\ldots,a} \cdot \ell^*c)
\end{align*}
and
\begin{align*}
\lambda_{a,a+1}(\pi_!(\bar{\epsilon}^{1,2,\ldots, a} \cdot \ell^*c) &\cdot \pi_!(\bar{\epsilon}^{a+1, \ldots, a+b} \cdot \ell^*c')) = \pi_!(\bar{\epsilon}^{1,\ldots, a-1, a+2, \ldots, a+b} \cdot \ell^*(c \cdot c'))\\
& \quad+ \tfrac{1}{\chi^2} \kappa_{e^2} \cdot \pi_!(\bar{\epsilon}^{1,\ldots,a-1} \cdot \ell^*c) \cdot \pi_!(\bar{\epsilon}^{a+2, \ldots, a+b} \cdot \ell^*c')\\
& \quad- \tfrac{1}{\chi} \pi_!(\bar{\epsilon}^{1,\ldots, a-1} \cdot \ell^*(e \cdot c)) \cdot \pi_!(\bar{\epsilon}^{a+2, \ldots, a+b} \cdot \ell^*c')\\
& \quad- \tfrac{1}{\chi} \pi_!(\bar{\epsilon}^{1,\ldots, a-1} \cdot \ell^*c) \cdot \pi_!(\bar{\epsilon}^{a+2, \ldots, a+b} \cdot \ell^*(e \cdot c')).
\end{align*}
\end{proposition}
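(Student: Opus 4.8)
The plan is to deduce both identities from Lemma~\ref{lem:ModifiedContractionFormula} together with the standard formalism of fibre integration: the projection formula, the naturality of $\bar{\epsilon}$ and of $\pi_!$ under pullback (resp.\ in the coefficient system), the identities $p_! \circ \Delta_! = \pi_!$ and $\gamma \cdot \Delta_!(1) = \Delta_!(\Delta^*\gamma)$ for the diagonal $\Delta : E \to E \times_X E$, and the product-over-fibre-product formula $\pi_!(\alpha) \cdot \pi_!(\beta) = p_!(\mathrm{pr}_1^*\alpha \cdot \mathrm{pr}_2^*\beta)$ for $p : E \times_X E \to X$. Throughout one uses that, since $\ell$ is a lift of the fibrewise Gauss map along $\theta$, we have $e(T_\pi E) = \ell^*e$ and hence $\ell^*(e \cdot c) = e \cdot \ell^*c$, so that the Euler class may be moved freely in and out of the argument of $\ell^*$. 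By Remark~\ref{rem:split} (or directly, since Lemma~\ref{lem:ModifiedContractionFormula} is stated for arbitrary bundles) there is no loss in assuming a section exists when convenient.

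For the first identity, I would begin by computing $\lambda_{1,2}(\bar{\epsilon} \cdot \bar{\epsilon}) \in H^{2n}(E;\bQ)$. Writing $\bar{\epsilon} \cdot \bar{\epsilon} = \Delta^*(\bar{\epsilon} \times \bar{\epsilon})$ and using that $\lambda_{1,2}$ is natural in the coefficient system, this equals $\Delta^*$ applied to the right-hand side of Lemma~\ref{lem:ModifiedContractionFormula}. Now $\Delta^*\Delta_!(1) = e(T_\pi E) = e$ (the normal bundle of the diagonal being the vertical tangent bundle), $\Delta^*p^* = \pi^*$ since $p \circ \Delta = \pi$, and $\Delta^*(e \times 1) = \Delta^*(1 \times e) = e$ since $\mathrm{pr}_i \circ \Delta = \mathrm{id}_E$; hence $\lambda_{1,2}(\bar{\epsilon} \cdot \bar{\epsilon}) = \tfrac{\chi - 2}{\chi} e + \tfrac{1}{\chi^2}\pi^*\kappa_{e^2}$. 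Multiplying by $\bar{\epsilon}^{3,4,\ldots,a} \cdot \ell^*c$, applying $\pi_!$, and using the projection formula on the $\pi^*\kappa_{e^2}$-term then gives $\tfrac{\chi-2}{\chi}\pi_!(\bar{\epsilon}^{3,\ldots,a} \cdot \ell^*(e \cdot c)) + \tfrac{1}{\chi^2}\kappa_{e^2} \cdot \pi_!(\bar{\epsilon}^{3,\ldots,a} \cdot \ell^*c)$; since $\pi_!$ commutes with the coefficient operation $\lambda_{1,2}$, this is the claimed formula.

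For the second identity, I would write the product of the two pushforwards over $X$ as a single pushforward over $E \times_X E$, namely $p_!\bigl(\mathrm{pr}_1^*(\bar{\epsilon}^{1,\ldots,a} \cdot \ell^*c) \cdot \mathrm{pr}_2^*(\bar{\epsilon}^{a+1,\ldots,a+b} \cdot \ell^*c')\bigr)$, in which $\mathrm{pr}_1^*\bar{\epsilon}$ carries the $a$th and $\mathrm{pr}_2^*\bar{\epsilon}$ the $(a+1)$th tensor factor. Applying $\lambda_{a,a+1}$ replaces these two factors by $\lambda_{1,2}(\bar{\epsilon} \times \bar{\epsilon})$, and substituting Lemma~\ref{lem:ModifiedContractionFormula} produces four terms. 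In the $\Delta_!(1)$-term one uses $\gamma \cdot \Delta_!(1) = \Delta_!(\Delta^*\gamma)$, then $p_!\Delta_! = \pi_!$, together with $\Delta^*\mathrm{pr}_i^* = \mathrm{id}$ and $\Delta^*\mathrm{pr}_i^*\bar{\epsilon} = \bar{\epsilon}$, to obtain $\pi_!(\bar{\epsilon}^{1,\ldots,a-1,a+2,\ldots,a+b} \cdot \ell^*(c \cdot c'))$. In the $\tfrac{1}{\chi^2}p^*\kappa_{e^2}$-term one pulls $\kappa_{e^2}$ out of $p_!$ by the projection formula and reverses the product-over-fibre-product identity, obtaining $\tfrac{1}{\chi^2}\kappa_{e^2} \cdot \pi_!(\bar{\epsilon}^{1,\ldots,a-1} \cdot \ell^*c) \cdot \pi_!(\bar{\epsilon}^{a+2,\ldots,a+b} \cdot \ell^*c')$. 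In the last two terms one absorbs $e \times 1 = \mathrm{pr}_1^*\ell^*e$ into $\mathrm{pr}_1^*\ell^*c$ and $1 \times e = \mathrm{pr}_2^*\ell^*e$ into $\mathrm{pr}_2^*\ell^*c'$, again reversing the fibre-product formula, to obtain the remaining two summands of the claim.

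The work is essentially bookkeeping, and the only points requiring care are the sign and ordering conventions for the coefficient systems $\cH^{\otimes S}$ and the associated $\det$-twists of~\eqref{eq:Unorderedkappa}, together with the re-indexing of the surviving tensor factors after each contraction. As all the Euler classes entering the formulas have even degree, no further sign subtleties arise when commuting them past $\bar{\epsilon}$, and there is no conceptual obstacle beyond Lemma~\ref{lem:ModifiedContractionFormula} itself.
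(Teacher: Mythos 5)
Your proposal is correct and takes essentially the same route as the paper, which simply asserts that the proposition follows "immediately" from Lemma~\ref{lem:ModifiedContractionFormula}; your write-up supplies exactly the standard bookkeeping (pulling back the lemma along the diagonal for the self-contraction, and rewriting the product of pushforwards as a pushforward over $E \times_X E$ for the cross-contraction) that the paper leaves implicit. The individual steps you invoke ($\Delta^*\Delta_!(1) = e$, the projection formula, $p_!\Delta_! = \pi_!$) are all correct as used.
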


Similarly, from Lemma \ref{lem:Compare} we immediately obtain the following:

\begin{proposition}\label{prop:Pullback}
If the bundle $\pi : E \to X$ has a section, so that the class $\epsilon$ and hence $\kappa_{\epsilon^S c}$ is defined, then
\begin{equation*}
\kappa_{\bar{\epsilon}^{S} c}  =  \sum_{I \subseteq S}\kappa_{\epsilon^I c}(- \tfrac{1}{\chi} \kappa_{\epsilon e})^{S \setminus I} \in H^*(X ; \cH^{\otimes S}) \otimes (\det \bQ^S)^{\otimes n}.
\end{equation*}
\end{proposition}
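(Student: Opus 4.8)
The plan is to derive the formula directly from Lemma~\ref{lem:Compare} by a binomial expansion. Fix an ordering $S = \{s_1 < \cdots < s_a\}$, so that by definition $\kappa_{\bar\epsilon^{s_1,\ldots,s_a} c} = \pi_!\bigl(\bar\epsilon^{s_1,\ldots,s_a}\cdot\ell^*c\bigr)$, where $\bar\epsilon^{s_1,\ldots,s_a}\in H^{na}(E;\cH^{\otimes S})$ is the $a$-fold cup product of copies of $\bar\epsilon\in H^n(E;\cH)$, the $i$-th copy contributing to the $s_i$-th tensor factor. Since $\pi : E \to X$ has a section, Lemma~\ref{lem:Compare} gives $\bar\epsilon = \epsilon - \tfrac{1}{\chi}\pi^*\kappa_{\epsilon e}$ in $H^n(E;\cH)$, so each of the $a$ factors may be replaced by $\epsilon - \tfrac{1}{\chi}\pi^*\kappa_{\epsilon e}$, landing in the appropriate tensor slot.

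Next I would multiply out the resulting product. Both $\epsilon$ and $\pi^*\kappa_{\epsilon e}$ have degree $n$, so the terms of the expansion are indexed by subsets $I\subseteq S$: the term for $I$ has $\epsilon$ in the slots indexed by $I$ and $-\tfrac{1}{\chi}\pi^*\kappa_{\epsilon e}$ in the slots indexed by $S\setminus I$, all kept in the order inherited from $S$. Reassembling such a term as an ordered product of an $\epsilon^I$-type class and $|S\setminus I|$ copies of $\pi^*\kappa_{\epsilon e}$ introduces exactly the signs encoded by the twist $(\det\bQ^S)^{\otimes n}$, just as in the passage from \eqref{eq:barMMM} to \eqref{eq:Unorderedkappa}. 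This yields $\bar\epsilon^{s_1,\ldots,s_a} = \sum_{I\subseteq S}\epsilon^I\cdot\bigl(-\tfrac{1}{\chi}\pi^*\kappa_{\epsilon e}\bigr)^{S\setminus I}$ in $H^{na}(E;\cH^{\otimes S})\otimes(\det\bQ^S)^{\otimes n}$.

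Finally I would apply $\pi_!(-\cdot\ell^*c)$ term by term. For each $I$, the factors $-\tfrac{1}{\chi}\pi^*\kappa_{\epsilon e}$ are pulled back from $X$, so by the projection formula they come out of the fibre integral, leaving $\pi_!(\epsilon^I\cdot\ell^*c) = \kappa_{\epsilon^I c}$; since the substitution from Lemma~\ref{lem:Compare} does not move tensor factors, the coefficient systems reassemble correctly slot by slot, so the $I$-th term becomes $\kappa_{\epsilon^I c}\bigl(-\tfrac{1}{\chi}\kappa_{\epsilon e}\bigr)^{S\setminus I}$. Summing over $I\subseteq S$ gives the claimed identity.

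The only point that needs care---the ``hard part'', such as it is---is the sign bookkeeping: one must check that the Koszul signs arising when commuting the degree-$n$ classes $\epsilon$ and $\pi^*\kappa_{\epsilon e}$ past each other in the expansion, and when identifying $\cH^{\otimes I}\otimes\cH^{\otimes(S\setminus I)}$ with $\cH^{\otimes S}$, match the signs already encoded in the ordered-subset notation and the determinant twist. Since $\epsilon$ and $\pi^*\kappa_{\epsilon e}$ carry the same degree and occupy the same tensor slots before and after the substitution, this is purely formal, which is why the paper treats the proposition as immediate.
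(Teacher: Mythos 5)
Your proof is correct and follows exactly the route the paper intends: the paper states that Proposition \ref{prop:Pullback} is obtained ``immediately'' from Lemma \ref{lem:Compare}, and your argument---substituting $\bar{\epsilon} = \epsilon - \tfrac{1}{\chi}\pi^*\kappa_{\epsilon e}$, expanding over subsets $I \subseteq S$, and applying the projection formula, with the signs absorbed into the $(\det \bQ^S)^{\otimes n}$ twist---is precisely the computation being left to the reader. No discrepancy with the paper's approach.
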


Let us give an example of using the modified contraction formula to evaluate an expression.

\begin{example}\label{ex:theta}
Consider the class $\lambda_{1,5}\lambda_{2,6} \lambda_{3,4}(\kappa_{\bar{\epsilon}^{1,2,3}} \cdot \kappa_{\bar{\epsilon}^{4,5,6}})$. Then
\begin{align*}
\lambda_{1,5}\lambda_{2,6} \lambda_{3,4}(\kappa_{\bar{\epsilon}^{1,2,3}} \cdot \kappa_{\bar{\epsilon}^{4,5,6}}) &= \lambda_{1,5}\lambda_{2,6}\Bigl(\kappa_{\bar{\epsilon}^{1,2,5,6}} + \tfrac{1}{\chi^2} \kappa_{e^2} \kappa_{\bar{\epsilon}^{1,2}}\kappa_{\bar{\epsilon}^{5,6}} \\
& \quad\quad\quad\quad\quad\quad\quad\quad\quad - \tfrac{1}{\chi}(\kappa_{\bar{\epsilon}^{1,2}e}\kappa_{\bar{\epsilon}^{5,6}} + \kappa_{\bar{\epsilon}^{1,2}}\kappa_{\bar{\epsilon}^{5,6}e})\Bigr).
\end{align*}

The first term is
\begin{align*}
\lambda_{1,5}\lambda_{2,6}(\kappa_{\bar{\epsilon}^{1,2,5,6}}) &= (-1)^n\lambda_{1,5}\lambda_{2,6}(\kappa_{\bar{\epsilon}^{2,6,1,5}})\\
&= (-1)^n\lambda_{1,5}(\tfrac{\chi-2}{\chi} \kappa_{\bar{\epsilon}^{1,5}e} + \tfrac{1}{\chi^2} \kappa_{e^2} \kappa_{\bar{\epsilon}^{1,5}})\\
&= (-1)^n \tfrac{\chi-2}{\chi}(\tfrac{\chi-2}{\chi} \kappa_{e^2} + \tfrac{1}{\chi^2}\kappa_{e^2} \chi ) + (-1)^n \tfrac{1}{\chi^2} \kappa_{e^2} (\tfrac{\chi-2}{\chi} \chi )\\
&=(-1)^n(\tfrac{(\chi-2)^2}{\chi^2} + 2\tfrac{\chi-2}{\chi^2}) \kappa_{e^2}.
\end{align*}
The second term is
\begin{align*}
\tfrac{1}{\chi^2} \kappa_{e^2} \lambda_{1,5}\lambda_{2,6}(\kappa_{\bar{\epsilon}^{1,2}}\kappa_{\bar{\epsilon}^{5,6}}) &= (-1)^n\tfrac{1}{\chi^2} \kappa_{e^2} \lambda_{1,5}\lambda_{2,6}(\kappa_{\bar{\epsilon}^{1,2}}\kappa_{\bar{\epsilon}^{6,5}})\\
&= (-1)^n\tfrac{1}{\chi^2} \kappa_{e^2} \lambda_{1,5}(\kappa_{\bar{\epsilon}^{1,5}})\\
&= (-1)^n\tfrac{\chi-2}{\chi^2} \kappa_{e^2}.
\end{align*}
The third term is
\begin{align*}
-\tfrac{1}{\chi} \lambda_{1,5}\lambda_{2,6}(\kappa_{\bar{\epsilon}^{1,2}e}\kappa_{\bar{\epsilon}^{5,6}}) &= (-1)^{n+1}\tfrac{1}{\chi} \lambda_{1,5}\lambda_{2,6}(\kappa_{\bar{\epsilon}^{1,2}e}\kappa_{\bar{\epsilon}^{6,5}})\\
&= (-1)^{n+1}\tfrac{1}{\chi} \lambda_{1,5}(\kappa_{\bar{\epsilon}^{1,5}e} - \tfrac{1}{\chi} \kappa_{\bar{\epsilon}^{1}e}\kappa_{\bar{\epsilon}^{5}e})\\
&= (-1)^{n+1}\tfrac{1}{\chi} \Bigl((\tfrac{\chi-2}{\chi} \kappa_{e^2} + \tfrac{1}{\chi^2}\kappa_{e^2} \chi) \\
& \quad\quad\quad\quad\quad\quad\quad\quad\quad - \tfrac{1}{\chi}(\kappa_{e^2} + \tfrac{1}{\chi^2} \kappa_{e^2} \chi^2 - \tfrac{1}{\chi}(2 \chi \kappa_{e^2}))\Bigr)\\
&= (-1)^{n+1}(\tfrac{\chi-2}{\chi^2} + \tfrac{1}{\chi^2}-\tfrac{1}{\chi^2} - \tfrac{1}{\chi^2}+\tfrac{2}{\chi^2})\kappa_{e^2}\\
&= (-1)^{n+1}\tfrac{\chi-1}{\chi^2}\kappa_{e^2}
\end{align*}
and the fourth term is the same as the third by the evident symmetry.

In total we have
\begin{align*}
\lambda_{1,5}\lambda_{2,6} \lambda_{3,4}(\kappa_{\bar{\epsilon}^{1,2,3}} \cdot \kappa_{\bar{\epsilon}^{4,5,6}}) &= (-1)^{n}(\tfrac{(\chi-2)^2}{\chi^2} + 2\tfrac{\chi-2}{\chi^2} +\tfrac{\chi-2}{\chi^2} - 2\tfrac{\chi-1}{\chi^2}) \kappa_{e^2}\\
&= (-1)^{n}\tfrac{\chi-3}{\chi} \kappa_{e^2}.
\end{align*}
\qed
\end{example}

\subsection{Graphical interpretation}\label{sec:Graphical}

In \cite[Section 5]{KR-WTorelli} it was found to be very convenient to adopt a graphical formalism where $\kappa_{\epsilon^a c}$ corresponds to a vertex with $a$ half-edges incident to it and a formal label $c$, a product of $\kappa_{\epsilon^a c}$'s corresponds to a disjoint union of such vertices, and applying the contraction $\lambda_{i,j}$ corresponds to pairing up the half-edges labelled $i$ and $j$.

It will be convenient to adopt a similar formalism here. Let $S$ be a finite set, and $\mathcal{V}$ be a graded $\bQ$-algebra with a distinguished element $e \in \mathcal{V}_{2n}$. Slightly modifying\footnote{The difference is that we allow labelled vertices whose contribution to the degree is 0.} the definition from \cite[Proof of Theorem 5.1]{KR-WTorelli}, a \emph{marked oriented graph} with legs $S$ and labelled by $\cV$ consists of the following data:
\begin{enumerate}[\indent (i)]
\item a totally ordered finite set $\vec{V}$ (of vertices), a totally ordered finite set $\vec{H}$ (of half-edges), and a monotone function $a \colon \vec{H} \to \vec{V}$ (encoding that a half-edge $h$ is incident to the vertex $a(h)$),
\item an ordered matching $m = \{(a_i, b_i)\}_{i \in I}$ of the set $H \sqcup S$ (encoding the oriented edges of the graph),
\item a function $c \colon V \to \cV$ with homogeneous values, such that $|c(v)| + n(|a^{-1}(v)|-2) \geq 0$.
\end{enumerate}

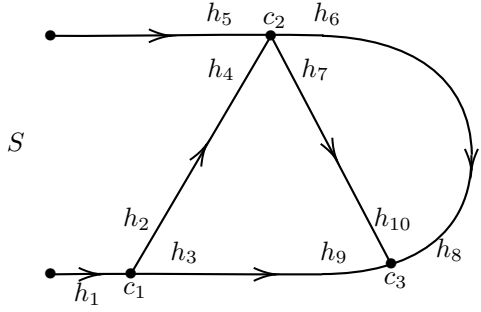
\begin{figure}[h]

\begin{tikzpicture}[x=0.75pt,y=0.75pt,yscale=-1,xscale=1]

\draw [thick]   (90,50) .. controls (134.99,50.3) and (182.06,48.81) .. (226.26,49.8)(225.51,50) .. controls (268.95,51.19) and (299.47,70.22) .. (300.22,109.83)(300,110) .. controls (300.75,149.62) and (270.45,169.85) .. (225.51,169.85)(225.51,170) .. controls (180.56,170.15) and (135.45,169.34) .. (90,170) ;
\draw [thick]   (130,170) -- (200,50) ;
\draw [thick, shift={(167.85,104.64)}, rotate = 120.14] [color={rgb, 255:red, 0; green, 0; blue, 0 }  ][line width=0.75]    (10.93,-3.29) .. controls (6.95,-1.4) and (3.31,-0.3) .. (0,0) .. controls (3.31,0.3) and (6.95,1.4) .. (10.93,3.29)   ;
\draw  [thick]  (200,50) -- (260,165) ;
\draw [thick, shift={(232.81,112.47)}, rotate = 242.12] [color={rgb, 255:red, 0; green, 0; blue, 0 }  ][line width=0.75]    (10.93,-3.29) .. controls (6.95,-1.4) and (3.31,-0.3) .. (0,0) .. controls (3.31,0.3) and (6.95,1.4) .. (10.93,3.29)   ;
\draw [thick, shift={(202.33,170)}, rotate = 180] [color={rgb, 255:red, 0; green, 0; blue, 0 }  ][line width=0.75]    (10.93,-3.29) .. controls (6.95,-1.4) and (3.31,-0.3) .. (0,0) .. controls (3.31,0.3) and (6.95,1.4) .. (10.93,3.29)   ;
\draw [thick, shift={(300,117.83)}, rotate = 270] [color={rgb, 255:red, 0; green, 0; blue, 0 }  ][line width=0.75]    (10.93,-3.29) .. controls (6.95,-1.4) and (3.31,-0.3) .. (0,0) .. controls (3.31,0.3) and (6.95,1.4) .. (10.93,3.29)   ;

\draw [thick, shift={(115,170)}, rotate = 180] [color={rgb, 255:red, 0; green, 0; blue, 0 }  ][line width=0.75]    (10.93,-3.29) .. controls (6.95,-1.4) and (3.31,-0.3) .. (0,0) .. controls (3.31,0.3) and (6.95,1.4) .. (10.93,3.29)   ;

\draw [thick, shift={(150,50)}, rotate = 180] [color={rgb, 255:red, 0; green, 0; blue, 0 }  ][line width=0.75]    (10.93,-3.29) .. controls (6.95,-1.4) and (3.31,-0.3) .. (0,0) .. controls (3.31,0.3) and (6.95,1.4) .. (10.93,3.29)   ;

\draw (125,173) node [anchor=north west][inner sep=0.75pt]    {$c_{1}$};
\draw (194.67,37) node [anchor=north west][inner sep=0.75pt]    {$c_{2}$};
\draw (255,168) node [anchor=north west][inner sep=0.75pt]    {$c_{3}$};
\draw (67,97.4) node [anchor=north west][inner sep=0.75pt]    {$S$};
\draw (100.33,173.07) node [anchor=north west][inner sep=0.75pt]    {$h_{1}$};
\draw (125,135) node [anchor=north west][inner sep=0.75pt]    {$h_{2}$};
\draw (148.67,152) node [anchor=north west][inner sep=0.75pt]    {$h_{3}$};
\draw (167,60) node [anchor=north west][inner sep=0.75pt]    {$h_{4}$};
\draw (166.33,32) node [anchor=north west][inner sep=0.75pt]    {$h_{5}$};
\draw (220,32) node [anchor=north west][inner sep=0.75pt]    {$h_{6}$};
\draw (214,60) node [anchor=north west][inner sep=0.75pt]    {$h_{7}$};
\draw (281,150.73) node [anchor=north west][inner sep=0.75pt]    {$h_{8}$};
\draw (223.33,152) node [anchor=north west][inner sep=0.75pt]    {$h_{9}$};
\draw (250,135) node [anchor=north west][inner sep=0.75pt]    {$h_{10}$};

\node at (90,50)  {$\bullet$};
\node at (90,170)  {$\bullet$};

\node at (130,170)  {$\bullet$};
\node at (200,50)  {$\bullet$};
\node at (260,165)  {$\bullet$};

\end{tikzpicture}
\caption{An example of a marked oriented graph, with $\vec{V} = (v_1, v_2, v_3)$, $c_i = c(v_i) \in \cV$, and $\vec{H} = (h_1, h_2, \ldots, h_{10})$.}\label{fig:ExMarkedGph}
\end{figure}

\begin{convention*}
In later figures, to avoid clutter we adopt the following ordering conventions: vertices are numbered starting from 1 from left to right, half-edges around each vertex are ordered clockwise starting from the marked half-edge, and edges are oriented from the smaller half-edge to the larger one. These conventions are already used in Figure \ref{fig:ExMarkedGph}, when the ``marked half-edge'' is taken to be the smallest half-edge at a given vertex.
\end{convention*}

Marked oriented graphs $\Gamma = (\vec{V}, \vec{H}, a, m, c)$ and $\Gamma' = (\vec{V}', \vec{H}', a', m', c')$ with the same set of legs $S$ are \emph{isomorphic} if there are order-preserving bijections $\vec{V} \overset{\sim}\to \vec{V}'$ and $\vec{H} \overset{\sim}\to \vec{H}'$ which intertwine $a$ and $a'$, intertwine $c$ and $c'$, and send $m$ to $m'$. An \emph{oriented graph} is an isomorphism class $[\Gamma]$ of marked oriented graphs. We assign to a marked oriented graph $\Gamma = (\vec{V}, \vec{H}, a, m, c)$ the degree
\[\mathrm{deg}(\Gamma) \coloneqq \sum_{v \in V} \left(|c(v)| + n(|a^{-1}(v)|-2)\right) = n (|H|-2|V|) + \sum_{v \in V} |c(v)|.\]

Let $\pi : E \to X$ be an oriented $W_g$-bundle with a lift $\ell : E \to B$ of the map classifying the vertical tangent bundle along $\theta : B \to B\SO(2n)$, and let $\mathcal{V} := H^*(B;\bQ)$ and $e := \theta^* e \in \mathcal{V}_{2n}$. Then given a marked oriented graph $\Gamma = (\vec{V}, \vec{H}, a, m, c)$ with legs $S$ we form a class 
$$\bar{\kappa}(\Gamma) \in H^{\mathrm{deg}(\Gamma)}(X;\cH^{\otimes S})$$
by the following recipe. Firstly, we may form
\begin{equation}\label{eq:VxContrib}
\prod_{v \in V} \kappa_{\bar{\epsilon}^{a^{-1}(v)} c(v)} \in H^*(X; \cH^{\otimes H}),
\end{equation}
where we have used the ordering on $\vec{V}$ to order the product, and the ordering on $\vec{H}$ to trivialise the factor of $(\det \bQ^H)^{\otimes n} = (\bigotimes_{v \in V} \det \bQ^{a^{-1}(v)})^{\otimes n}$ that arises from \eqref{eq:Unorderedkappa}. Secondly, taking two copies $S_1$ and $S_2$ of the set $S$ and writing $s_i \in S_i$ for the element corresponding to $s \in S$ we can form
\begin{equation}\label{eq:LegContrib}
\prod_{s \in S} \kappa_{\bar{\epsilon}^{s_1, s_2}} \in H^*(X, \mathcal{H}^{\otimes (S_1 \sqcup S_2)}).
\end{equation}
As each $\kappa_{\bar{\epsilon}^{s_1, s_2}}$ has degree 0, the product does not depend on how the factors are ordered. Taking the product of \eqref{eq:VxContrib} and \eqref{eq:LegContrib} and then applying $\lambda_{x,y}$ for each ordered pair $(x,y)$ in the matching $m$ on $H \sqcup S = H \sqcup S_1$ gives the required class $\bar{\kappa}(\Gamma) \in H^{*}(X;\cH^{\otimes S_2}) = H^{*}(X;\cH^{\otimes S})$. 

\begin{example}
In this graphical interpretation we recognise the class evaluated in Example \ref{ex:theta} as that associated to the theta-graph $\begin{tikzpicture}
 [baseline=-.65ex]
 \node[circle,draw,fill,inner sep=1pt] (v) at (0,0) {};
 \node[circle,draw,fill,inner sep=1pt] (w) at (0.5,0) {};
\draw (v) edge[bend left] (w) edge[bend right] (w) edge (w);
\end{tikzpicture}$ with a certain ordering and orientation.
\end{example}

Clearly $\bar{\kappa}(\Gamma)$ only depends on the underlying oriented graph $[\Gamma]$. We now describe how it transforms when the orderings on $V$, $H$, and the pairs $m$ are changed, without changing the underlying labelled graph. If $\Gamma'=(\vec{V}', \vec{H}', a', m', c')$ is another marked oriented graph and there are bijections $f : H \to H'$ and $g : V \to V'$ intertwining $a$ and $a'$ and $c$ and $c'$ and such that under these bijections the matching $m'$ differs from $m$ by reversing the order of $k$ pairs, then
$$\bar{\kappa}(\Gamma') = (-1)^{nk} \sign(f) \sign(g) \cdot \bar{\kappa}(\Gamma)$$
for certain signs described on \cite[pp.\ 55-56]{KR-WTorelli}.

Graphs considered as representing $\bar{\kappa}$'s behave differently to those representing $\kappa$'s described in \cite[Section 5]{KR-WTorelli}. To distinguish them we will depict the graphs representing $\kappa$'s in \textcolor{Mahogany}{red}, as we did in that paper, and the graphs representing $\bar{\kappa}$'s in \textcolor{NavyBlue}{blue}. The contraction formula of \cite[Proposition 3.10]{KR-WTorelli} was interpreted in \cite[Section 5]{KR-WTorelli} as giving relations among \textcolor{Mahogany}{red} graphs which yield equivalent $\kappa$-classes. In the generality of a smooth oriented $W_g$-bundle $\pi : E \to X$ with section $s : X \to E$ these may be depicted as follows:

\begin{figure}[h]

\begin{tikzpicture}[arrowmark/.style 2 args={decoration={markings,mark=at position #1 with \arrow{#2}}}]
\begin{scope}[scale=0.5]

	\draw[dashed] (0,0) circle (1.8cm);
	\draw [thick,Mahogany]  (0,.3) circle (0.8cm);
	\node at (0,-0.5) [Mahogany] {$\bullet$};
	\draw [Mahogany] (-0.7,-0.5) -- (-0.3,-0.1);
	\draw [thick,Mahogany] (0,-1.8) -- (0,-0.5);
	\draw [thick,Mahogany] (-1,-1.5) -- (0,-0.5);
	\node at (0,-.5) [above] {$c$};

	\node at (2.5,0) {=};
	
	\begin{scope}[xshift=5cm]
	\draw[dashed] (0,0) circle (1.8cm);
	\node at (0,-0.5) [Mahogany] {$\bullet$};
		\draw [Mahogany] (-0.3,-0.9) -- (0.3,-0.9);
	\draw [thick,Mahogany] (0,-1.8) -- (0,-0.5);
	\draw [thick,Mahogany] (-1,-1.5) -- (0,-0.5);
	\node at (0,-.5) [above] {$ce$};
	
	\end{scope}
	
	\begin{scope}[xshift=10.5cm]
	\node at (-2.75,0) {$+ s^*e$};
	\draw[dashed] (0,0) circle (1.8cm);
	\node at (0,-0.5) [Mahogany] {$\bullet$};
	\draw [Mahogany] (-0.3,-0.9) -- (0.3,-0.9);
	\draw [thick,Mahogany] (0,-1.8) -- (0,-0.5);
	\draw [thick,Mahogany] (-1,-1.5) -- (0,-0.5);
	\node at (0,-.5) [above] {$c$};
	\end{scope}

	\begin{scope}[xshift=16.5cm]
	\node at (-3,0) {$- 2s^*c$};
	\draw[dashed] (0,0) circle (1.8cm);	
	\end{scope}

	\end{scope}
	\end{tikzpicture}

	\vspace{2ex}
	
	\begin{tikzpicture}
	\begin{scope}[scale=0.5]
	\draw[dashed] (0,0) circle (1.8cm);
	
	\node at (-1,0) [Mahogany] {$\bullet$};
	\draw [Mahogany] (-1.3,-0.3) -- (-1.3,0.3);
	\node at (-1,0) [below] {$c$};
	\node at (1,0) [Mahogany] {$\bullet$};
	\draw [Mahogany] (0.7,-0.3) -- (0.7,0.3);
	\node at (1,0) [above] {$c'$};
	\draw [thick,Mahogany] (-1,0) -- (-1.8,0);
	\draw [thick,Mahogany] (-1,0) -- (-1,1.5);
	\draw [thick,Mahogany] (-1,0) -- (1,0);
	\draw [thick,Mahogany] (1,0) -- (1.8,0);
	\draw [thick,Mahogany] (1,0) -- (1,-1.5);
	
	\node at (2.5,0) {=};
	
	\begin{scope}[xshift=5cm]
	\draw[dashed] (0,0) circle (1.8cm);
	\node at (0,0) [Mahogany] {$\bullet$};
	\draw [Mahogany] (-0.3,-0.3) -- (-0.3,0.3);
	\node at (0,0) [above right] {$c c'$};
	\draw [thick,Mahogany] (-1.8,0) -- (0,0);
	\draw [thick,Mahogany] (0,0) -- (-1,1.5);
	\draw [thick,Mahogany] (0,0) -- (1.8,0);
	\draw [thick,Mahogany] (0,0) -- (1,-1.5);

	\node at (2.75,0) {$+ s^*e$};

	\begin{scope}[xshift=5.5cm]
	\draw[dashed] (0,0) circle (1.8cm);
	\node at (-1,0) [Mahogany] {$\bullet$};
	\draw [Mahogany] (-1.3,-0.3) -- (-1.3,0.3);
	\node at (-1,0) [below] {$c$};
	\node at (1,0) [Mahogany] {$\bullet$};
	\draw [Mahogany] (1.3,-0.3) -- (1.3,0.3);
	\node at (1,0) [above] {$c'$};
	\draw [thick,Mahogany] (-1,0) -- (-1.8,0);
	\draw [thick,Mahogany] (-1,0) -- (-1,1.5);
	\draw [thick,Mahogany] (1,0) -- (1.8,0);
	\draw [thick,Mahogany] (1,0) -- (1,-1.5);
	\node at (2.75,0) {$- s^*c$};
	
	\begin{scope}[xshift=5.5cm]
	\draw[dashed] (0,0) circle (1.8cm);
	
	\node at (1,0) [Mahogany] {$\bullet$};
	\draw [Mahogany] (1.3,-0.3) -- (1.3,0.3);
	\node at (1,0) [above] {$c'$};
	\draw [thick,Mahogany] (1,0) -- (1.8,0);
	\draw [thick,Mahogany] (1,0) -- (1,-1.5);

	\node at (2.9,0) {$- s^*c'$};
	
	\begin{scope}[xshift=5.8cm]
	\draw[dashed] (0,0) circle (1.8cm);
	\node at (-1,0) [Mahogany] {$\bullet$};
	\draw [Mahogany] (-1.3,-0.3) -- (-1.3,0.3);
	\node at (-1,0) [below] {$c$};
	\draw [thick,Mahogany] (-1,0) -- (-1.8,0);
	\draw [thick,Mahogany] (-1,0) -- (-1,1.5);
	\end{scope}

	\end{scope}

	\end{scope}

	\end{scope}

	\end{scope}
	\end{tikzpicture}
\caption{The contraction formula, displayed graphically.}\label{fig:ContFormulaGraph}
\end{figure}
Here the negative terms only arise when they make sense, i.e.\ when the vertex has valence 2 in the first case, when the vertex labelled $c$ has valence 1 in the second case, and when the vertex labelled $c'$ has valence 1 in the third case. We have used the ordering convention described above, where the marked half-edge is indicated by a bar.

Similarly, the modified contraction formula of Proposition \ref{prop:MCF} can be interpreted as giving the following relations among \textcolor{NavyBlue}{blue} graphs which yield equivalent $\bar{\kappa}$-classes: 

\begin{figure}[h]
	\begin{tikzpicture}[arrowmark/.style 2 args={decoration={markings,mark=at position #1 with \arrow{#2}}}]
\begin{scope}[scale=0.5]

	\draw[dashed] (0,0) circle (1.8cm);
	\draw [thick,NavyBlue]  (0,.3) circle (0.8cm);
	\node at (0,-0.5) [NavyBlue] {$\bullet$};
	\draw [NavyBlue] (-0.7,-0.5) -- (-0.3,-0.1);
	\draw [thick,NavyBlue] (0,-1.8) -- (0,-0.5);
	\draw [thick,NavyBlue] (-1,-1.5) -- (0,-0.5);
	\node at (0,-.5) [above] {$c$};

	\node at (2.5,0) {=};
	
	\begin{scope}[xshift=7cm]
	\node at (-3,0) {$\tfrac{\chi-2}{\chi}$};
	\draw[dashed] (0,0) circle (1.8cm);
	\node at (0,-0.5) [NavyBlue] {$\bullet$};
	\draw [NavyBlue] (-0.3,-0.9) -- (0.3,-0.9);
	\draw [thick,NavyBlue] (0,-1.8) -- (0,-0.5);
	\draw [thick,NavyBlue] (-1,-1.5) -- (0,-0.5);
	\node at (0,-.5) [above] {$ce$};
	\end{scope}
	
	\begin{scope}[xshift=13cm]
	\node at (-3,0) {$+\tfrac{1}{\chi^2}$};
	\draw[dashed] (0,0) circle (1.8cm);
	\node at (0,-0.5) [NavyBlue] {$\bullet$};
	\draw [NavyBlue] (-0.3,-0.9) -- (0.3,-0.9);
	\draw [thick,NavyBlue] (0,-1.8) -- (0,-0.5);
	\draw [thick,NavyBlue] (-1,-1.5) -- (0,-0.5);
	\node at (0,-.5) [above] {$c$};
	\node at (1,0) [NavyBlue] {$\bullet$};
	\node at (1,0) [above] {$e^2$};
	\end{scope}

	\end{scope}
	\end{tikzpicture}
	
	\vspace{2ex}
	
	\begin{tikzpicture}
	\begin{scope}[scale=0.5]
	\draw[dashed] (0,0) circle (1.8cm);
	
	\node at (-1,0) [NavyBlue] {$\bullet$};
	\draw [NavyBlue] (-1.3,-0.3) -- (-1.3,0.3);
	\node at (-1,0) [NavyBlue, below] {$c$};
	\node at (1,0) [NavyBlue] {$\bullet$};
	\draw [NavyBlue] (0.7,-0.3) -- (0.7,0.3);
	\node at (1,0) [above] {$c'$};
	\draw [thick,NavyBlue] (-1,0) -- (-1.8,0);
	\draw [thick,NavyBlue] (-1,0) -- (-1,1.5);
	\draw [thick,NavyBlue] (-1,0) -- (1,0);
	\draw [thick,NavyBlue] (1,0) -- (1.8,0);
	\draw [thick,NavyBlue] (1,0) -- (1,-1.5);
	
	\node at (2.5,0) {=};
	
	\begin{scope}[xshift=5cm]
	\draw[dashed] (0,0) circle (1.8cm);
	\node at (0,0) [NavyBlue] {$\bullet$};
	\draw [NavyBlue] (-0.3,-0.3) -- (-0.3,0.3);
	\node at (0,0) [above right] {$c c'$};
	\draw [thick,NavyBlue] (-1.8,0) -- (0,0);
	\draw [thick,NavyBlue] (0,0) -- (-1,1.5);
	\draw [thick,NavyBlue] (0,0) -- (1.8,0);
	\draw [thick,NavyBlue] (0,0) -- (1,-1.5);
	
	\node at (2.5,0) {$+ \tfrac{1}{\chi^2}$};

	\begin{scope}[xshift=5cm]
	\draw[dashed] (0,0) circle (1.8cm);
	
	\node at (-1,0) [NavyBlue] {$\bullet$};
	\draw [NavyBlue] (-1.3,-0.3) -- (-1.3,0.3);
	\node at (-1,0) [below] {$c$};
	\node at (1,0) [NavyBlue] {$\bullet$};
	\draw [NavyBlue] (1.3,-0.3) -- (1.3,0.3);
	\node at (1,0) [above] {$c'$};
	\draw [thick,NavyBlue] (-1,0) -- (-1.8,0);
	\draw [thick,NavyBlue] (-1,0) -- (-1,1.5);
	\draw [thick,NavyBlue] (1,0) -- (1.8,0);
	\draw [thick,NavyBlue] (1,0) -- (1,-1.5);
	\node at (0,0) [NavyBlue] {$\bullet$};
	\node at (0,0) [above] {$e^2$};
			
	\node at (2.6,0) {$- \tfrac{1}{\chi} ($};
	
	\begin{scope}[xshift=5.2cm]
	\draw[dashed] (0,0) circle (1.8cm);
	\node at (-1,0) [NavyBlue] {$\bullet$};
	\draw [NavyBlue] (-1.3,-0.3) -- (-1.3,0.3);
	\node at (-1,0) [below] {$ce$};
	\node at (1,0) [NavyBlue] {$\bullet$};
	\draw [NavyBlue] (1.3,-0.3) -- (1.3,0.3);
	\node at (1,0) [above] {$c'$};
	\draw [thick,NavyBlue] (-1,0) -- (-1.8,0);
	\draw [thick,NavyBlue] (-1,0) -- (-1,1.5);
	\draw [thick,NavyBlue] (1,0) -- (1.8,0);
	\draw [thick,NavyBlue] (1,0) -- (1,-1.5);
	
	\node at (2.5,0) {$+$};
	
	\begin{scope}[xshift=5cm]
	\draw[dashed] (0,0) circle (1.8cm);
	\node at (-1,0) [NavyBlue] {$\bullet$};
	\draw [NavyBlue] (-1.3,-0.3) -- (-1.3,0.3);
	\node at (-1,0) [NavyBlue, below] {$c$};
	\node at (1,0) [NavyBlue] {$\bullet$};
	\draw [NavyBlue] (1.3,-0.3) -- (1.3,0.3);
	\node at (1,0) [above] {$c'e$};
	\draw [thick,NavyBlue] (-1,0) -- (-1.8,0);
	\draw [thick,NavyBlue] (-1,0) -- (-1,1.5);
	\draw [thick,NavyBlue] (1,0) -- (1.8,0);
	\draw [thick,NavyBlue] (1,0) -- (1,-1.5);
	
	\node at (2.2,0) {$)$};

	\end{scope}

	\end{scope}

	\end{scope}

	\end{scope}

	\end{scope}
	\end{tikzpicture}
\caption{The modified contraction formula, displayed graphically.}\label{fig:ModContFormulaGraph}
\end{figure}

\section{Twisted cohomology of diffeomorphism groups}\label{sec:TwistedCoh}

The main goal of this section is to describe the twisted cohomology groups
$$H^*(B\Diff^{+}(W_g) ; \mathcal{H}^{\otimes S}) \text{ and } H^*(B\Diff^{+}(W_g, *) ; \mathcal{H}^{\otimes S})$$
in a stable range of degrees, of the classifying space $B\Diff^{+}(W_g)$ of the group of orientation-preserving diffeomorphisms of $W_g$ (which classifies oriented $W_g$-bundles), and the classifying space $B\Diff^{+}(W_g, *)$ of the group of orientation-preserving diffeomorphisms of $W_g$ which fix a point $* \in W_g$ (which classifies oriented $W_g$-bundles with section). In \cite[Theorem 3.15]{KR-WTorelli} the analogous calculation was given for the classifying space $B\Diff(W_g, D^{2n})$ of the group of  diffeomorphisms of $W_g$ which fix a disc $D^{2n} \subset W_g$.

In order to do this we will also discuss the manifolds $W_g$ equipped with $\theta$-structures for the tangential structure $\theta : B\SO(2n)\langle n \rangle \to B\OO(2n)$, i.e.\ the $n$-connected cover of $B\OO(2n)$. In this case we will consider the homotopy quotients
\begin{align*}
B\Diff^\theta(W_g) &:= \mathrm{Bun}(TW_g, \theta^*\gamma_{2n}) \hcoker \Diff(W_g)\\
B\Diff^\theta(W_g, *) &:= \mathrm{Bun}(TW_g, \theta^*\gamma_{2n}) \hcoker \Diff(W_g, *)
\end{align*}
where $\mathrm{Bun}(TW_g, \theta^*\gamma_{2n})$ denotes the space of vector bundle maps $TW_g \to \theta^*\gamma_{2n}$ from the tangent bundle of $W_g$ to the bundle classified by $\theta$. The group $\Diff(W_g)$ acts on the space of bundle maps by precomposing with the derivative.

There is a factorisation $\theta : B\SO(2n)\langle n \rangle \overset{\theta^{\mathrm{or}}}\to B\SO(2n) \overset{\sigma}\to B\OO(2n)$, and by obstruction theory one sees that the space $\mathrm{Bun}(TW_g, \sigma^*\gamma_{2n})$ has two contractible path components corresponding to the two orientations of $W_g$. In particular there are equivalences
\begin{align*}
\mathrm{Bun}(TW_g, \sigma^*\gamma_{2n}) \hcoker \Diff(W_g) &\simeq B\Diff^{+}(W_g)\\
\mathrm{Bun}(TW_g, \sigma^*\gamma_{2n}) \hcoker \Diff(W_g, *) &\simeq B\Diff^{+}(W_g, *)
\end{align*}
and so $\theta^\mathrm{or}$ induces maps
\begin{align*}
B\Diff^\theta(W_g) \lra B\Diff^{+}(W_g) \quad\text{ and }\quad B\Diff^\theta(W_g, *) \lra B\Diff^{+}(W_g, *).
\end{align*}
It is shown in \cite[Section 5.2]{grwsurvey} that these are principal $\SO[0,n-1]$-fibrations. In particular the spaces $B\Diff^\theta(W_g)$ and $B\Diff^\theta(W_g, *)$ are path-connected.

\subsection{Spaces of graphs}\label{sec:SpacesOfGraphs}

Our description of the twisted cohomology groups of $B\Diff^{+}(W_g)$, $B\Diff^{+}(W_g, *)$, $B\Diff(W_g, D^{2n}) $, $B\Diff^{\theta}(W_g)$ and $B\Diff^{\theta}(W_g, *)$ in a stable range will be---via the graphical interpretation given in Section \ref{sec:Graphical}---in terms of graded vector spaces of labelled graphs, modulo certain relations. (Readers of \cite{KR-WTorelli} may have been expecting vector spaces of labelled partitions instead: here we have found spaces of graphs more convenient for formulating results, cf.\ Remark \ref{rem:GraphsAndPartitions}, though spaces of labelled partitions will still play a role in the proofs.) To describe these spaces of graphs we will use the graded $\bQ$-algebras
\begin{align*}
\cV &\coloneqq H^*(B\SO(2n) \langle n \rangle;\bQ) = \bQ[p_{\lceil \tfrac{n+1}{4}\rceil}, \ldots, p_{n-1}, e]\\
\cW &\coloneqq H^*(B\SO(2n);\bQ) = \bQ[p_1, \ldots, p_{n-1}, e]
\end{align*}
with distinguished elements $e$ of degree $2n$ given by the Euler class. In order to work in a way which is agnostic about the genus $g$ of the manifold $W_g$ under consideration, we will work over the ring $\bQ[\chi^{\pm 1}]$ instead of $\bQ$, where $\chi$ is an invertible formal parameter which will---later---be set to the Euler characteristic $2 + (-1)^n 2g$ of $W_g$.

\begin{definition}\label{defn:GraphSpaces} \mbox{}
\begin{enumerate}[(i)]
\item Let 
$$\mathcal{G}\mathrm{raph}_1(S) := \bQ[\chi^{\pm 1}]\{\Gamma \text{ oriented graph with legs $S$, labelled in $\cV$}\}/\sim$$
where $\sim$ 
\begin{enumerate}[(a)]
\item imposes the sign rule for changing orderings of vertices and half-edges and for reversing orientations of edges;

\item imposes linearity in the labels, and sets a graph containing an $a$-valent vertex labelled by $c$ with $|c| + n(a-1) < 0$ to zero;

\item sets the 0-valent vertex labelled by $e \in \cV_{2n}$ equal to $\chi$, and if $2n \equiv 0 \mod 4$ sets the 0-valent vertex labelled by $p_{n/2} \in \cV_{2n}$ equal to 0;

\item imposes the contraction relations\footnote{These are the relations from Figure \ref{fig:ContFormulaGraph} when $s^*$ kills all positive-degree classes.}

\begin{figure}[!h]

\begin{tikzpicture}[arrowmark/.style 2 args={decoration={markings,mark=at position #1 with \arrow{#2}}}]
\begin{scope}[scale=0.5]

	\draw[dashed] (0,0) circle (1.8cm);
	\draw [thick,Mahogany]  (0,.3) circle (0.8cm);
	\node at (0,-0.5) [Mahogany] {$\bullet$};
	\draw [Mahogany] (-0.7,-0.5) -- (-0.3,-0.1);
	\draw [thick,Mahogany] (0,-1.8) -- (0,-0.5);
	\draw [thick,Mahogany] (-1,-1.5) -- (0,-0.5);
	\node at (0,-.5) [above] {$c$};
	
	\node at (2.5,0) {=};
	
	\begin{scope}[xshift=5cm]
	\draw[dashed] (0,0) circle (1.8cm);
	\node at (0,-0.5) [Mahogany] {$\bullet$};
	\draw [Mahogany] (-0.3,-0.9) -- (0.3,-0.9);
	\draw [thick,Mahogany] (0,-1.8) -- (0,-0.5);
	\draw [thick,Mahogany] (-1,-1.5) -- (0,-0.5);
	\node at (0,-.5) [above] {$ce$};
	\end{scope}
	
	\begin{scope}[xshift=12cm]
	\node at (-3,0) {$- \quad 2c$};
	\draw[dashed] (0,0) circle (1.8cm);	
	\end{scope}

	\end{scope}
	\end{tikzpicture}
	
	\vspace{2ex}
	
	\begin{tikzpicture}
	\begin{scope}[scale=0.5]
	\draw[dashed] (0,0) circle (1.8cm);
	\node at (-1,0) [Mahogany] {$\bullet$};
	\draw [Mahogany] (-1.3,-0.3) -- (-1.3,0.3);
	\node at (-1,0) [below] {$c$};
	\node at (1,0) [Mahogany] {$\bullet$};
	\draw [Mahogany] (0.7,-0.3) -- (0.7,0.3);
	\node at (1,0) [above] {$c'$};
	\draw [thick,Mahogany] (-1,0) -- (-1.8,0);
	\draw [thick,Mahogany] (-1,0) -- (-1,1.5);
	\draw [thick,Mahogany] (-1,0) -- (1,0);
	\draw [thick,Mahogany] (1,0) -- (1.8,0);
	\draw [thick,Mahogany] (1,0) -- (1,-1.5);
	
	\node at (2.5,0) {=};
	
	\begin{scope}[xshift=5.3cm]
	\draw[dashed] (0,0) circle (1.8cm);
	\node at (0,0) [Mahogany] {$\bullet$};
	\draw [Mahogany] (-0.3,-0.3) -- (-0.3,0.3);
	\node at (0,0) [above right] {$c c'$};
	\draw [thick,Mahogany] (-1.8,0) -- (0,0);
	\draw [thick,Mahogany] (0,0) -- (-1,1.5);
	\draw [thick,Mahogany] (0,0) -- (1.8,0);
	\draw [thick,Mahogany] (0,0) -- (1,-1.5);
	
	\node at (3,0) {$-\quad c$};
	
	\begin{scope}[xshift=5.8cm]
	\draw[dashed] (0,0) circle (1.8cm);
	\node at (1,0) [Mahogany] {$\bullet$};
	\draw [Mahogany] (1.3,-0.3) -- (1.3,0.3);
	\node at (1,0) [above] {$c'$};
	\draw [thick,Mahogany] (1,0) -- (1.8,0);
	\draw [thick,Mahogany] (1,0) -- (1,-1.5);
	
	\node at (3.2,0) {$-\quad c'$};
	
	\begin{scope}[xshift=6cm]
	\draw[dashed] (0,0) circle (1.8cm);
	\node at (-1,0) [Mahogany] {$\bullet$};
	\draw [Mahogany] (-1.3,-0.3) -- (-1.3,0.3);
	\node at (-1,0) [below] {$c$};
	\draw [thick,Mahogany] (-1,0) -- (-1.8,0);
	\draw [thick,Mahogany] (-1,0) -- (-1,1.5);
	\end{scope}

	\end{scope}
		
	\end{scope}

	\end{scope}
	\end{tikzpicture}
\end{figure}
where the negative terms only arises when they makes sense,  i.e.\ in the first case when the vertex has valence 2 and its label $c$ is a scalar multiple of $1 \in \cV_0$, in the second case when $c$ is a scalar multiple of $1 \in \mathcal{V}_0$ and has valence 1, and similarly in the third case.
\end{enumerate}

\item Let 
$$\mathcal{G}\mathrm{raph}_*^\theta(S) := \bQ[\chi^{\pm 1}]\{\Gamma \text{ oriented graph with legs $S$, labelled in $\cV$}\}\otimes \cV/\sim$$
where $\sim$ imposes (a)--(c) as well as
\begin{enumerate}[(a$^\prime$)]
\setcounter{enumii}{3}

\item imposes the contraction relations of Figure \ref{fig:ContFormulaGraph}.
\end{enumerate}

\item Let 
$$\mathcal{G}\mathrm{raph}_*(S) := \bQ[\chi^{\pm 1}]\{\Gamma \text{ oriented graph with legs $S$, labelled in $\cW$}\}\otimes \cW /\sim$$
where $\sim$ imposes (a) and (b), as well as 

\begin{enumerate}[(a$^{\prime\prime}$)]
\setcounter{enumii}{2}

\item  sets the 0-valent vertex labelled by $e \in \cW_{2n}$ equal to $\chi$, sets the 0-valent vertex labelled by any degree $2n$ monomial in Pontrjagin classes equal to 0, and for any $1 \leq i  \leq \lfloor n/4 \rfloor$ sets

\begin{figure}[h]
	\begin{tikzpicture}[arrowmark/.style 2 args={decoration={markings,mark=at position #1 with \arrow{#2}}}]
	
	\begin{scope}[scale=0.5]
	\draw[dashed] (0,0) circle (1.8cm);
	\node at (0,-0.5) [Mahogany] {$\bullet$};
	\draw [Mahogany] (-0.3,-0.9) -- (0.3,-0.9);
	\draw [thick,Mahogany] (0,-1.8) -- (0,-0.5);
	\draw [thick,Mahogany] (-1,-1.5) -- (0,-0.5);
	\node at (0,-.5) [above] {$c p_i$};
	
	\begin{scope}[xshift=6cm]
	\node at (-3,0) {$= \tfrac{1}{\chi}$};
		
	\draw[dashed] (0,0) circle (1.8cm);
	\node at (0,-0.5) [Mahogany] {$\bullet$};
	\draw [Mahogany] (-0.3,-0.9) -- (0.3,-0.9);
	\draw [thick,Mahogany] (0,-1.8) -- (0,-0.5);
	\draw [thick,Mahogany] (-1,-1.5) -- (0,-0.5);
	\node at (0,-.5) [above] {$c$};
	
	\node at (2.5,0) {$\otimes p_i$};
	
	\end{scope}

	\end{scope}
	\end{tikzpicture}
\end{figure}
\end{enumerate}
and (d$^\prime$).

\item Let 
$$\mathcal{G}\mathrm{raph}^\theta(S) := \bQ[\chi^{\pm 1}]\{\Gamma \text{ oriented graph with legs $S$, labelled in $\cV$}\}/\sim$$
where $\sim$ imposes (a) and (b), as well as
\begin{enumerate}[(a$^{\prime\prime\prime}$)]
\setcounter{enumii}{2}

\item  sets the 0-valent vertex labelled by $e \in \cV_{2n}$ equal to $\chi$, if $2n \equiv 0 \mod 4$ sets the 0-valent vertex labelled by $p_{n/2} \in \cV_{2n}$ equal to 0, and sets the 1-valent vertex labelled by $e \in \cV_{2n}$ equal to 0,

\item imposes the contraction relations of Figure \ref{fig:ModContFormulaGraph}.
\end{enumerate}

\item Let 
$$\mathcal{G}\mathrm{raph}(S) := \bQ[\chi^{\pm 1}]\{\Gamma \text{ oriented graph with legs $S$, labelled in $\cW$}\}/\sim$$
where $\sim$ imposes (a), (b), as well as
\begin{enumerate}[(a$^{\prime\prime\prime\prime}$)]
\setcounter{enumii}{2}

\item  sets the 0-valent vertex labelled by $e \in \cW_{2n}$ equal to $\chi$, sets the 0-valent vertex labelled by any degree $2n$ monomial in Pontrjagin classes equal to 0, sets the 1-valent vertex labelled by $e \in \cW_{2n}$ equal to 0, and for any $1 \leq i  \leq \lfloor n/4 \rfloor$ sets

\begin{figure}[h]
	\begin{tikzpicture}[arrowmark/.style 2 args={decoration={markings,mark=at position #1 with \arrow{#2}}}]
	
	\begin{scope}[scale=0.5]
	\draw[dashed] (0,0) circle (1.8cm);
	\node at (0,-0.5) [NavyBlue] {$\bullet$};
	\draw [NavyBlue] (-0.3,-0.9) -- (0.3,-0.9);
	\draw [thick,NavyBlue] (0,-1.8) -- (0,-0.5);
	\draw [thick,NavyBlue] (-1,-1.5) -- (0,-0.5);
	\node at (0,-.5) [NavyBlue,above] {$c p_i$};
	
	\begin{scope}[xshift=6cm]
	\node at (-3,0) {$= \tfrac{1}{\chi}$};
	\draw[dashed] (0,0) circle (1.8cm);
	\node at (0,-0.5) [NavyBlue] {$\bullet$};
	\draw [NavyBlue] (-0.3,-0.9) -- (0.3,-0.9);
	\draw [thick,NavyBlue] (0,-1.8) -- (0,-0.5);
	\draw [thick,NavyBlue] (-1,-1.5) -- (0,-0.5);
	\node at (0,-.5) [NavyBlue,above] {$c$};
	
	\node at (1,0) [NavyBlue] {$\bullet$};
	\node at (1,0) [NavyBlue,above] {$e p_i$};
	
	\end{scope}

	\end{scope}
	\end{tikzpicture}
\end{figure}
\end{enumerate}
and (d$^{\prime \prime\prime}$).
\end{enumerate}
\end{definition}

\begin{remark}[Graphs and partitions]\label{rem:GraphsAndPartitions} 
In all cases one can apply the (modified) contraction formula to pass from a graph to a sum of graphs with strictly fewer edges, and so by iterating to a sum of graphs with no edges. These are disjoint unions of labelled corollas, and so correspond to partitions of $S$ with labels in $\mathcal{V}$ or $\mathcal{W}$, plus additional external labels in cases (ii) and (iii). There are two issues with this. The first is that in cases (iv) and (v) it is not clear that the resulting sum of disjoint unions of labelled corollas is unique, as one has to choose an order in which to eliminate edges. The second is that even if it is, then the functoriality on the Brauer category which we describe below would involve gluing in edges and then eliminating them, leading to a complicated formula. This is why we have found it convenient to work with spaces of graphs.
\end{remark}

We wish to consider each of the above as defining functors on the (signed) Brauer category as in \cite[Section 2.3]{KR-WTorelli}, but to take into account the parameter $\chi$ we must slightly generalise to a $\bQ[\chi]$-linear version of the (signed) Brauer category.

\begin{definition}
For finite sets $S$ and $T$ let $\mathsf{preBr}^\chi(S,T)$ be the free $\bQ[\chi]$-module on tuples $(f,m_S, m_T)$ of a bijection $f$ from a subset $S^\circ \subset S$ to a subset $T^\circ \subset T$, an ordered matching $m_S$ of $S \setminus S^\circ$, and an ordered matching $m_T$ of $T \setminus T^\circ$.

Let $\mathsf{Br}^\chi(S,T)$ be the quotient of $\mathsf{preBr}^\chi(S,T)$ by the span of $(f,m_S, m_T) - (f,m'_S, m'_T)$ whenever $m_S$ agrees with $m'_S$ after reversing some pairs, and $m_T$ agrees with $m'_T$ after reversing some pairs.

Let $\mathsf{sBr}^\chi(S,T)$ be the quotient of $\mathsf{preBr}^\chi(S,T)$ by the span of $(f,m_S, m_T) - (-1)^{k+l}(f,m'_S, m'_T)$ whenever $m_S$ agrees with $m'_S$ after reversing $k$ pairs, and $m_T$ agrees with $m'_T$ after reversing $l$ pairs.

\tikzset{middlearrow/.style={
        decoration={markings,
            mark= at position 0.5 with {\arrow[scale=2]{#1}} ,
        },
        postaction={decorate}
    }
}

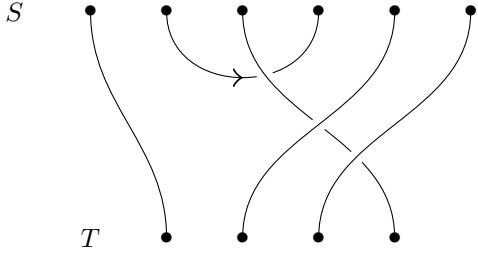
\begin{figure}[h]
	\begin{tikzpicture}
	\draw[middlearrow={to}] (0,3) to[out=-90,in=-90,looseness=1.5] (2,3);
	\draw (-1,3) to[out=-90,in=90] (0,0);
	\draw [line width=2mm,white] (1,3) to[out=-90,in=90] (3,0);
	\draw (1,3) to[out=-90,in=90] (3,0);
	\draw [line width=2mm,white] (3,3) to[out=-90,in=90] (1,0);
	\draw (3,3) to[out=-90,in=90] (1,0);
	\draw [line width=2mm,white] (4,3) to[out=-90,in=90] (2,0);
	\draw (4,3) to[out=-90,in=90] (2,0);
	\foreach \x in {-1,...,4}
	\node at (\x,3) {$\bullet$};
	\foreach \x in {-0,...,3}
	\node at (\x,0) {$\bullet$};
	\node at (-2,3) {$S$};
	\node at (-1,0) {$T$};
	\end{tikzpicture}
	\caption{A graphical representation of a basis element $(f,m_S, m_T)$ in $\mathsf{preBr}^\chi(S,T)$ from a $6$-element set $S$ to a $4$-element set $T$, with $m_T = \emptyset$. (The order of crossings is irrelevant.) Reversing the arrow gives the same element in $\mathsf{Br}^\chi(S,T)$ and minus the element in $\mathsf{sBr}^\chi(S,T)$.}
	\label{fig:brauerdownward}
\end{figure}

Let $\mathsf{(s)Br}^\chi$ be the $\bQ[\chi]$-linear category whose objects are finite sets, and whose morphisms are the $\bQ[\chi]$-modules $\mathsf{(s)Br}^\chi(S,T)$ defined above. In the case of $\mathsf{Br}^\chi$ we think of $[f,m_S, m_T]$ as representing 1-dimensional cobordisms with no closed components: then the composition law is given by composing cobordisms and then replacing each closed 1-manifold by a factor of $\chi-2$. In the case of $\mathsf{sBr}^\chi$ we think of $(f,m_S, m_T)$ as representing oriented 1-dimensional cobordisms with no closed components: then the composition law is given by composing cobordisms and then replacing each compatibly oriented closed 1-manifold by a factor of $-(\chi-2)$.

Let $\mathsf{d(s)Br}^\chi$ denote the subcategories having all objects and morphisms spanned by $[f,m_S, m_T]$ with $T^\circ=\emptyset$.

For a \emph{central charge} $d \in \bQ$ let $\mathsf{(d)(s)Br}_d$ denote the $\bQ$-linear category obtained by specialising the $\bQ[\chi]$-linear category $\mathsf{(d)(s)Br}^\chi$ to $\chi=2 + d$ for $\mathsf{(d)Br}$ or $\chi=2 - d$ for $\mathsf{(d)sBr}$. (This notation then agrees with \cite[Definition 2.14, 2.19]{KR-WTorelli}.)
\end{definition}

We consider the spaces of graphs above as defining $\bQ[\chi]$-linear functors
$$\mathcal{G}\mathrm{raph}_1(-), \mathcal{G}\mathrm{raph}_*^\theta(-), \mathcal{G}\mathrm{raph}_*(-), \mathcal{G}\mathrm{raph}^\theta(-), \mathcal{G}\mathrm{raph}(-) : \mathsf{(s)Br}^\chi \to \mathsf{Gr}(\bQ[\chi^{\pm 1}]\text{-}\mathsf{mod})$$
in the evident way, by gluing of oriented graphs (after orientations have been arranged to be compatible). We endow them with a lax symmetric monoidality by disjoint union of graphs. We write $\mathcal{G}\mathrm{raph}_1(-)_{g} : \mathsf{(s)Br}_{2g} \to \mathsf{Gr}(\bQ\text{-}\mathsf{mod})$  and so on for their specialisations at $\chi=2+(-1)^n2g$ (defined for $(n,g) \neq (\text{odd}, 1)$).

\subsection{The isomorphism theorem}

Theorem \ref{thm:TwistedCoh} below extends \cite[Theorem 3.15]{KR-WTorelli} to $B\mathrm{Diff}^{\theta}(W_g, *)$, $B\mathrm{Diff}^{+}(W_g, *)$, $B\mathrm{Diff}^{\theta}(W_g)$, and $B\mathrm{Diff}^{+}(W_g)$. 

To formulate it we first observe that when $\pi : E \to X$ is a smooth oriented $W_g$-bundle and $\mathcal{H}$ is the local coefficient system over $X$ given by the fibrewise $n$th homology of this bundle, the fibrewise intersection form $\lambda : \mathcal{H} \otimes \mathcal{H} \to \bQ$ and its dual $\omega : \bQ \to \mathcal{H} \otimes \mathcal{H}$ are $(-1)^n$-symmetric and satisfy $\lambda \circ \omega = (-1)^n2g \cdot \mathrm{Id}$, so provide a $\bQ$-linear functor $S \mapsto \mathcal{H}^{\otimes S}$ from $\mathsf{(s)Br}_{2g}$ to the category of local coefficient systems of $\bQ$-modules over $X$. (Strictly speaking our definitions require $\chi = 2 + (-1)^n 2g$ to be invertible, so we omit the case $(n,g) = (\text{odd}, 1)$.) Composing this with taking cohomology gives a functor
\begin{align*}
H^*(X ; \mathcal{H}^{\otimes -}) : \mathsf{(s)Br}_{2g} &\lra \mathsf{Gr}(\bQ\text{-}\mathsf{mod})\\
S &\longmapsto H^*(X ; \mathcal{H}^{\otimes S}).
\end{align*}

The relations in the various spaces of graphs defined in Section \ref{sec:SpacesOfGraphs} were chosen precisely to match the contraction formula of \cite[Proposition 3.10]{KR-WTorelli} (in the case of $\mathcal{G}\mathrm{raph}_1$) and the modified contraction formula of Proposition \ref{prop:MCF} (in the other cases), so that assigning to a graph its associated $\kappa$- or $\bar{\kappa}$-class provides natural transformations
\begin{enumerate}[(i)]
\item $\kappa : \mathcal{G}\mathrm{raph}_1(-)_g  \to H^*(B\mathrm{Diff}(W_g, D^{2n}); \cH^{\otimes -})$,

\item $\kappa: \mathcal{G}\mathrm{raph}_*^\theta(-)_g \to H^*(B\mathrm{Diff}^{\theta}(W_g, *); \cH^{\otimes -})$,

\item $\kappa: \mathcal{G}\mathrm{raph}_*(-)_g \to H^*(B\mathrm{Diff}^{+}(W_g, *); \cH^{\otimes -})$,

\item $\bar{\kappa} : \mathcal{G}\mathrm{raph}^\theta(-)_g \to H^*(B\mathrm{Diff}^{\theta}(W_g); \cH^{\otimes -})$,

\item $\bar{\kappa} : \mathcal{G}\mathrm{raph}(-)_g \to H^*(B\mathrm{Diff}^{+}(W_g); \cH^{\otimes -})$,
\end{enumerate}
of functors $\mathsf{(s)Br}_{2g} \to \mathsf{Gr}(\bQ\text{-}\mathsf{mod})$.

\begin{theorem}\label{thm:TwistedCoh}
For $2n =2$ or $2n \geq 6$ the maps (i)--(v) are isomorphisms in a range of cohomological degrees tending to infinity with $g$.
\end{theorem}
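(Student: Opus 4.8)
The plan is to deduce the theorem from the case of $B\mathrm{Diff}(W_g, D^{2n})$, which is \cite[Theorem 3.15]{KR-WTorelli}, by propagating it through the tower of fibrations
$$B\mathrm{Diff}(W_g, D^{2n}) \to B\mathrm{Diff}^\theta(W_g, *) \to B\mathrm{Diff}^\theta(W_g) \qquad\text{and}\qquad B\mathrm{Diff}^\theta(-) \to B\mathrm{Diff}^+(-)$$
in a way that keeps track of the natural transformations (i)--(v) and the lax monoidal structure over $\mathsf{(s)Br}_{2g}$. The key geometric inputs are: first, that $B\mathrm{Diff}^\theta(W_g, *) \to B\mathrm{Diff}^\theta(W_g)$ has fibre $W_g$ (adding a point in the fibre), and its restriction pulled back to $B\mathrm{Diff}^\theta(W_g, D^{2n}) \simeq B\mathrm{Diff}(W_g, D^{2n})$ acquires a disc's worth of section, so that one gets a Serre spectral sequence whose $E_2$-page is controlled by $H^*$ of $B\mathrm{Diff}^\theta(W_g)$ with coefficients in the fibrewise cohomology of $W_g$, which is built out of $\bQ$, $\cH$, and $\cV$; and second, that $B\mathrm{Diff}^\theta(-) \to B\mathrm{Diff}^+(-)$ is a principal $\SO[0,n-1]$-fibration, as recalled in the excerpt, so that the Serre (or rather Eilenberg--Moore / Rothenberg--Steenrod) spectral sequence for it has $E_2$-page involving $H^*(\SO[0,n-1];\bQ)$, which accounts precisely for the difference between labelling in $\cV$ versus $\cW$ and the extra relations (c$''$), (c$''''$) in Definition \ref{defn:GraphSpaces}.

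Concretely I would proceed as follows. \emph{Step 1:} Establish (iv), the case of $B\mathrm{Diff}^\theta(W_g)$, by relating it to the already-known case of $B\mathrm{Diff}^\theta(W_g, *)$ — but note (ii), (iii) are themselves not yet proved, so in fact one should first handle $B\mathrm{Diff}^\theta(W_g, *)$. So \emph{Step 1$'$:} Prove (ii) by running the Serre spectral sequence for $B\mathrm{Diff}(W_g, D^{2n}) \to B\mathrm{Diff}^\theta(W_g, *)$ (which has fibre $W_g \setminus D^{2n} \simeq \bigvee^{2g} S^n$, a wedge of spheres, hence cohomology concentrated in degrees $0$ and $n$ and equal to $\bQ \oplus \cH[-n]$ as a coefficient system). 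Twisting by $\cH^{\otimes S}$ and using that the transfer/section structure splits off the relevant differentials (Lemma \ref{lem:Compare} and Proposition \ref{prop:Pullback} being the algebraic shadow of exactly this comparison), one identifies the abutment with the graph complex $\mathcal{G}\mathrm{raph}_*^\theta(S)_g$: the passage from $\mathcal{G}\mathrm{raph}_1$ to $\mathcal{G}\mathrm{raph}_*^\theta$ is precisely "forget the relations coming from the disc's section, i.e.\ (c) becomes no relation, plus tensor with $\cV$", which matches. One needs to check the spectral sequence degenerates in the stable range; this follows because the natural transformation $\kappa$ already furnishes classes hitting everything on the associated graded, and a dimension count (valid in the stable range, using that both sides are finitely generated as functors on the Brauer category, cf.\ the Schur--Weyl/categorical arguments of \cite{KR-WTorelli}) forces surjectivity to be an isomorphism. \emph{Step 2:} Deduce (iv) from (ii) by the Serre spectral sequence for the $W_g$-bundle $B\mathrm{Diff}^\theta(W_g, *) \to B\mathrm{Diff}^\theta(W_g)$: now there is no section, but there \emph{is} the transfer $\tfrac{1}{\chi}\pi_!(e\cdot-)$ (legitimate since $\chi\neq 0$), which splits $\pi^*$ — this is exactly the mechanism by which $\bar\epsilon$ was constructed in Section \ref{sec:ModifiedMMM}. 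Hence the fibration behaves cohomologically as though it had a section, the spectral sequence degenerates, and the abutment is computed by the graph complex with the modified contraction relations of Figure \ref{fig:ModContFormulaGraph} — i.e.\ $\mathcal{G}\mathrm{raph}^\theta(S)_g$, the relation (d$'''$) and the new relation "$1$-valent $e$-vertex $=0$" in (c$'''$) being the exact translation of property (ii) of $\bar\epsilon$ and of Lemma \ref{lem:ModifiedContractionFormula}.

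\emph{Step 3:} Pass from the $\theta$-structured statements (ii), (iv) to the oriented ones (iii), (v) along the principal $\SO[0,n-1]$-fibrations $B\mathrm{Diff}^\theta(W_g, *) \to B\mathrm{Diff}^+(W_g, *)$ and $B\mathrm{Diff}^\theta(W_g) \to B\mathrm{Diff}^+(W_g)$. Rationally $H^*(\SO[0,n-1];\bQ)$ is an exterior algebra on classes dual to the "missing" Pontrjagin classes $p_1,\dots,p_{\lfloor n/4\rfloor}$ (the ones in $\cW$ but not $\cV$); running the Eilenberg--Moore spectral sequence (or, equivalently, noting the fibration is rationally formal and comparing Sullivan models) one sees that $H^*(B\mathrm{Diff}^+(W_g,-);\bQ)$ is obtained from $H^*(B\mathrm{Diff}^\theta(W_g,-);\bQ)$ by adjoining those Pontrjagin classes and imposing relations — and on the graph side this is exactly the move from labelling in $\cV$ to labelling in $\cW$ together with the relation displayed in (c$''$) / (c$''''$), which says "a low-degree $p_i$ sitting at a vertex can be pushed off to an external $p_i$ (resp.\ traded for $e p_i$)". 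One must check the $E_2$-page collapses; again this is forced in the stable range by the existence of the $\kappa$/$\bar\kappa$ classes on the oriented spaces (the twisted MMM classes for $c\in\cW$ are defined there, since $B\SO(2n)$ receives the Gauss map) together with a counting argument. Finally I would note that the whole argument is uniform in $2n=2$ and $2n\ge 6$, since \cite[Theorem 3.15]{KR-WTorelli} is, and nothing above uses $2n\ge6$ beyond that input. \emph{Main obstacle:} the degeneration of these spectral sequences and the accompanying bookkeeping. Degeneration is not automatic — the differentials could in principle be nonzero — and the clean way to kill them is to observe that (a) the edge map is split by transfers/sections so the bottom row survives, (b) the natural transformation $\kappa$ (resp.\ $\bar\kappa$) already surjects onto the associated graded in the stable range, and (c) source and target are finitely generated functors on $\mathsf{(s)Br}_{2g}$ with matching "Euler characteristics" in each Brauer-degree, so surjective $\Rightarrow$ isomorphism; assembling (a)--(c) carefully, and checking the relations in Definition \ref{defn:GraphSpaces} are \emph{complete} (not just valid), is where the real work lies. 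This is the same type of argument as in \cite{KR-WTorelli} and \cite{KR-WAlg}, and I would lean on those rather than redo it from scratch.
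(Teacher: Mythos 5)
Your outline for cases (i), (ii), (iii) and (v) has the same shape as the paper's argument (propagate the known calculation for $B\Diff(W_g,D^{2n})$ through fibrations whose Serre spectral sequences degenerate because the relevant characteristic classes are already defined on the total space, then match the associated graded with a filtration on the graph spaces), but with one factual slip: the homotopy fibre relating $B\Diff(W_g,D^{2n})$ to $B\Diff^{\theta}(W_g,*)$ is \emph{not} $W_g\setminus D^{2n}\simeq\bigvee^{2g}S^n$. Fixing a disc versus fixing a point with $\theta$-data differ by a choice of frame, so the relevant fibre sequence is $B\Diff(W_g,D^{2n})\to B\Diff^{\theta}(W_g,*)\to B\SO(2n)\langle n\rangle$ (and $B\SO(2n)$ in the unstructured case); this is why $\mathcal{G}\mathrm{raph}_*^\theta$ carries an extra tensor factor of $\cV$ rather than a copy of $\cH$. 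The fibration with fibre $W_g$ is the one relating $*$ to nothing, which brings me to the real problem.

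The genuine gap is Step 2, i.e.\ case (iv). The transfer $\tfrac{1}{\chi}\pi_!(e\cdot-)$ does split $\pi^*$ and, together with the classes $1,\bar\epsilon,e$ detecting the rows, does force degeneration of the Serre spectral sequence of $W_g\to B\Diff^\theta(W_g,*)\to B\Diff^\theta(W_g)$. But that only exhibits $H^*(B\Diff^\theta(W_g);\cH^{\otimes S})$ as an unidentified direct summand of the group computed in case (ii); it does not show that the $\bar\kappa$-classes span this summand, nor that the relations (a), (b), (c$'''$), (d$'''$) are complete. Your proposed remedy --- surjectivity plus an ``Euler characteristic'' count --- presupposes knowing the dimension of $\mathcal{G}\mathrm{raph}^\theta(S)_g$ in each degree, and establishing that upper bound is precisely the content of the theorem in this case (in the paper it only emerges at the end of the proof, via the surjection \eqref{eq:MapToGraph} from labelled partitions). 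The paper instead proves (iv) by an essentially independent argument: it introduces the auxiliary tangential structure $\theta\times Y$ with $Y=K(W^\vee,n+1)$, uses the equaliser Lemma \ref{lem:QuillenLemma} for the principal $Y$-bundle $B\Diff^{\theta\times Y}(W_g)\to B\Diff^{\theta\times Y}(W_g)\hcoker Y$ to construct and characterise the classes $\bar\kappa_{c,w_1\cdots w_r}$, invokes the stable homology theorems (Madsen--Weiss/GMTW for $2n=2$, Galatius--Randal-Williams for $2n\ge 6$ --- so the dimension hypothesis enters here directly, not only through the prequel) to compute $H^*(B\Diff^{\theta\times Y}(W_g)\hcoker Y;\bQ)$ freely on these classes, and then extracts $H^*(B\Diff^\theta(W_g);\cH^{\otimes S})$ by a weight decomposition and Schur--Weyl duality. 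None of this is supplied, or replaceable, by the transfer argument you sketch, and the sources you propose to lean on do not treat the sectionless case.
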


We will first give the proof in cases (i), (ii), (iii), and in case (v) assuming case (iv); the much more involved case (iv) will be treated afterwards.

\begin{proof}[Proof of Theorem \ref{thm:TwistedCoh} (i), (ii), (iii), (v)]
For case (i) observe that $\mathcal{G}\mathrm{raph}_1(-)_g$ is naturally isomorphic to the functor $\mathcal{G}(-,\cV)$ from \cite[Proof of Theorem 5.1]{KR-WTorelli}, which is shown there to be isomorphic to the functor $\mathcal{P}(-, \cV)_{\geq 0} \otimes \det^{\otimes n}$. This case then follows from \cite[Theorem 3.15]{KR-WTorelli}.

For case (ii) we first construct a homotopy fibre sequence
\begin{equation}\label{eq:EvPt}
B\mathrm{Diff}(W_g, D^{2n}) \lra B\mathrm{Diff}^{\theta}(W_g, *) \lra B\SO(2n)\langle n \rangle.
\end{equation}
The left-hand term may be written as the homotopy quotient of $\mathrm{Diff}(W_g, *)$ acting on the Stiefel manifold $\mathrm{Fr}(T_*W_g)$ given by the space of frames in the tangent space to $W_g$ at the point $* \in W_g$, as this action is transitive and its stabiliser is the subgroup which fixes a point and its tangent space, which is homotopy equivalent to fixing a  disc. The middle term was defined as the homotopy quotient of $\mathrm{Diff}(W_g, *)$ acting on $\mathrm{Bun}(TW_g, \theta^*\gamma_{2n})$. Evaluation at $* \in W_g$ defines a $\mathrm{Diff}(W_g, *)$-invariant map
$$ev: \mathrm{Bun}(TW_g, \theta^*\gamma_{2n}) \lra B\SO(2n)\langle n \rangle.$$
This is easily seen to be a Serre fibration. If we choose a point $x \in B\SO(2n)\langle n \rangle$ and a framing $\xi : (\theta^*\gamma_{2n})_x \overset{\sim}\to \bR^{2n}$, then there is a map $\xi_* : ev^{-1}(x) \to \mathrm{Fr}(T_* W_g)$ given by sending a bundle map $\hat{\ell} : TW_g \to \theta^*\gamma_{2n}$ whose underlying map sends $*$ to $x$ to the framing $\xi \circ \hat{\ell}_x : T_* W_g \to (\theta^*\gamma_{2n})_x \to \bR^{2n}$. One verifies by obstruction theory that $\xi_* : ev^{-1}(x) \to \mathrm{Fr}(T_* W_g)$ is a weak equivalence. Taking homotopy orbits for $\mathrm{Diff}(W_g, *)$ then gives the required homotopy fibre sequence \eqref{eq:EvPt}.

As $H^*(B\mathrm{Diff}(W_g, D^{2n}); \cH^{\otimes S})$ is spanned by products of twisted Miller--Morita--Mumford classes $\kappa_{\epsilon^a c}$ with $c \in \mathcal{V}$ in a stable range by (i), and these classes may be defined on $B\mathrm{Diff}^{\theta}(W_g, *)$, the Serre spectral sequence 
$$H^*(B\SO(2n)\langle n \rangle;\bQ) \otimes H^*(B\mathrm{Diff}(W_g, D^{2n}); \cH^{\otimes S}) \Rightarrow H^*(B\mathrm{Diff}^{\theta}(W_g, *); \cH^{\otimes S})$$
for the homotopy fibre sequence \eqref{eq:EvPt} collapses in a stable range. The result then follows by observing that the analogue of the Serre filtration of $\mathcal{G}\mathrm{raph}_*^\theta(-)_g$, induced by the descending filtration by degrees of $H^*(B\SO(2n)\langle n \rangle;\bQ) = \cV$, has
$$\mathrm{gr}(\mathcal{G}\mathrm{raph}_*^\theta(-)_g) \cong \cV \otimes \mathcal{G}\mathrm{raph}_1(-)_g,$$
because modulo $\mathcal{V}_{>0}$ the formula of (d$^\prime$) specialises to that of (d). The induced map
$$\mathrm{gr}(\kappa) : \mathrm{gr}(\mathcal{G}\mathrm{raph}_*^\theta(-)_g) \lra \mathrm{gr}(H^*(B\mathrm{Diff}^{\theta}(W_g, *); \cH^{\otimes -}))$$
therefore has the form $\mathcal{V} \otimes \{\text{the map $\kappa$ in case (i)}\}$ so is an isomorphism in a stable range by case (i). Case (ii) follows.

Case (iii) is just like the above, using the homotopy fibre sequence
$$B\mathrm{Diff}(W_g, D^{2n}) \lra B\mathrm{Diff}^{+}(W_g, *) \lra B\SO(2n)$$
instead, which is established in the analogous way, and $\cW$ in place of $\cV$.

Case (v) can be deduced from case (iv) by applying the same method to the homotopy fibre sequence
$$B\mathrm{Diff}^\theta(W_g) \lra B\mathrm{Diff}^{+}(W_g) \overset{\xi}\lra B\SO[0,n]$$
established in \cite[Section 5.2]{grwsurvey}. The filtration step is a little different, so we give some details. It follows from (iv) that $H^*(B\mathrm{Diff}^\theta(W_g); \cH^{\otimes S})$ is spanned by products of twisted Miller--Morita--Mumford classes $\kappa_{\bar{\epsilon}^a c}$ with $c \in \mathcal{V}$ in a stable range, and these may be defined on $B\mathrm{Diff}^{+}(W_g)$ (in fact they may be defined even for $c \in \mathcal{W}$) so the corresponding Serre spectral sequence 
$$H^*(B\SO[0,n];\bQ) \otimes H^*(B\mathrm{Diff}^\theta(W_g); \cH^{\otimes S}) \Rightarrow H^*(B\mathrm{Diff}^{+}(W_g); \cH^{\otimes S})$$
degenerates in a stable range. In this case the analogue of the Serre filtration on $\mathcal{G}\mathrm{raph}(-)_g$ is induced by giving the graph $\Upsilon_i := (\{0\}, \emptyset, \emptyset \to \{0\}, \emptyset, c(0) = ep_i)$---which is a single vertex labelled by $ep_i$ and no edges---filtration $4i$ for $1 \leq i  \leq \lfloor n/4 \rfloor$, giving all other connected graphs filtration 0, and extending multiplicatively. The associated graded of this filtration has the form
$$\mathrm{gr}(\mathcal{G}\mathrm{raph}(-)_g) \cong \bQ[\Upsilon_1, \Upsilon_2, \ldots, \Upsilon_{ \lfloor n/4 \rfloor}] \otimes \mathcal{G}\mathrm{raph}^\theta(-)_g,$$
because the relation in (c$^{\prime\prime\prime\prime}$) shows that any graph with a vertex labelled $cp_i$ for $1 \leq i  \leq \lfloor n/4 \rfloor$ is equivalent to a graph of strictly larger filtration, unless the vertex is 0-valent and the label is $ep_i$. As $\bar{\kappa}(\Upsilon_i) = \kappa_{ep_i} = \chi \cdot \xi^*(p_i)$ by \cite[Remark 5.5]{grwsurvey} it follows that the induced map
$$\mathrm{gr}(\bar{\kappa}) : \mathrm{gr}(\mathcal{G}\mathrm{raph}(-)_g) \lra \mathrm{gr}(H^*(B\mathrm{Diff}^{+}(W_g); \cH^{\otimes S}))$$
has the form $\{\text{an isomorphism}\} \otimes \{\text{the map $\bar{\kappa}$ in case (iv)}\}$ so is an isomorphism in a stable range by case (iv).
\end{proof}

\subsection{Proof of Theorem \ref{thm:TwistedCoh} (iv)}

The proof of Theorem \ref{thm:TwistedCoh} (iv) is of a less formal nature. It will be parallel to that of \cite[Theorem 3.15]{KR-WTorelli}, but algebraically more complicated. An important tool will be the following lemma, inspired by \cite[p.\ 566]{QuillenEqI}.

\begin{lemma}\label{lem:QuillenLemma}
Let $G$ be a topological group and $p : P \to X$ be a principal $G$-bundle with action $a : G \times P \to P$, which satisfies the Leray--Hirsch property in cohomology over a field $\bF$. Then
\begin{equation*}
\begin{tikzcd}
H^*(X;\bF) \rar{p^*} & H^*(P;\bF) \ar[r,shift left=.75ex,"a^*"]
  \ar[r,shift right=.75ex,swap,"1 \otimes \mathrm{Id}"]& H^*(G;\bF) \otimes_\bF H^*(P;\bF)
\end{tikzcd}
\end{equation*}
is an equaliser diagram.
\end{lemma}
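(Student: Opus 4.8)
The plan is to identify the equaliser with the Borel construction and then apply the Leray--Hirsch property directly. Write $EG$ for a contractible free $G$-space, and recall that since $p : P \to X$ is a principal $G$-bundle we have $X \simeq P /\!\!/ G = P \times_G EG$, with $P \to X$ modelled by $P \times EG \to P \times_G EG$. The two maps $a^*, 1 \otimes \mathrm{Id} : H^*(P) \to H^*(G) \otimes_\bF H^*(P)$ are precisely the two face maps in low degree of the cosimplicial graded $\bF$-vector space $[q] \mapsto H^*(G^{\times q} \times P ; \bF) = H^*(G;\bF)^{\otimes q} \otimes_\bF H^*(P;\bF)$ obtained by applying cohomology to the simplicial bar-type space $[q] \mapsto G^{\times q} \times P$ computing the homotopy quotient $P /\!\!/ G$; here the Künneth isomorphism is what requires a field. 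So the claim is exactly that $H^*(X;\bF) = H^*(P/\!\!/G;\bF)$ is the equaliser of the first two coface maps of this cosimplicial object.

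First I would reduce to showing that the cosimplicial graded vector space $C^\bullet := H^*(G^{\times \bullet} \times P;\bF)$ has the property that the canonical map from $H^*(P/\!\!/G;\bF) = H^*(\mathrm{Tot})$ to $\mathrm{eq}(C^0 \rightrightarrows C^1)$ is an isomorphism. In general this is false for a bar construction, but the Leray--Hirsch hypothesis is exactly what rescues it: choose classes in $H^*(P;\bF)$ restricting to an $\bF$-basis of $H^*(G;\bF)$ on each fibre of $p$ (using that a principal bundle is in particular a fibre bundle with fibre $G$), equivalently a splitting $H^*(P;\bF) \cong H^*(X;\bF) \otimes_\bF H^*(G;\bF)$ as $H^*(X;\bF)$-modules. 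Under this splitting each level $C^q \cong H^*(X;\bF) \otimes_\bF H^*(G;\bF)^{\otimes(q+1)}$, and the cosimplicial structure maps become $H^*(X;\bF)$-linear and are identified with $H^*(X;\bF) \otimes_\bF (\text{the standard cobar/cosimplicial replacement of } \bF \text{ by } H^*(G;\bF)^{\otimes \bullet+1})$. The key point is that $H^*(G;\bF)^{\otimes \bullet+1}$, as a cosimplicial $\bF$-vector space, is the cobar resolution whose totalization (equivalently, whose $0$th cohomology, i.e.\ the equaliser of its first two cofaces) is just $\bF$ in degree $0$ with all higher cohomotopy vanishing — this is the standard fact that the unnormalized cochain complex of a constant-augmented cosimplicial object built from a bialgebra (or simply from the simplicial set $\mathrm{EG}_\bullet = G^{\times \bullet+1}$, which is contractible) is acyclic.

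Concretely, the cleanest route: the simplicial space $[q] \mapsto G^{\times q+1}$, i.e.\ $\mathrm{E}_\bullet G$ with its $P$-twisted version $[q] \mapsto G^{\times q} \times P \simeq G^{\times q+1}$ over a point of $X$, is such that over each point of $X$ the associated cochain complex is the (acyclic, since $EG \simeq *$) cochain complex computing $H^*(EG;\bF) = \bF$. So fibrewise over $X$ the cosimplicial vector space $C^\bullet$ has cohomotopy $\bF$ concentrated in cosimplicial degree $0$; since everything is a complex of free $H^*(X;\bF)$-modules (using the Leray--Hirsch splitting to make $C^\bullet$ a complex of $H^*(X;\bF)$-modules, termwise free) and the relevant spectral sequence / direct computation shows $H^*(\mathrm{Tot}\, C^\bullet) = H^*(X;\bF)$ with $H^0$ of the cosimplicial object equal to $H^*(X;\bF)$ as well, the edge map $H^*(X;\bF) \to \mathrm{eq}(C^0 \rightrightarrows C^1)$ is an isomorphism. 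Finally, I would check that under the identification $X \simeq P/\!\!/G$ the map $p^* : H^*(X;\bF) \to H^*(P;\bF)$ agrees with the inclusion of the equaliser, which is immediate from the description of the coface maps $d^0 = a^*$ and $d^1 = 1 \otimes \mathrm{Id}$: a class in $H^*(P;\bF)$ lies in the equaliser iff it is $G$-invariant in the appropriate cosimplicial sense iff it descends to $P/\!\!/G$, and the latter is exactly $p^*H^*(X;\bF)$.

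The main obstacle is organising the homological algebra so that the Leray--Hirsch splitting genuinely trivialises the cosimplicial object as $H^*(X;\bF) \otimes_\bF (\text{acyclic cobar complex})$: the splitting $H^*(P;\bF) \cong H^*(X;\bF)\otimes_\bF H^*(G;\bF)$ is only a splitting of modules, not of algebras, so one must be careful that the higher coface maps $d^i : C^q \to C^{q+1}$ are still $H^*(X;\bF)$-linear and do not mix the $H^*(X)$-factor with the $H^*(G)$-factors — this holds because the simplicial structure maps of $[q]\mapsto G^{\times q}\times P$ only ever act on the $G$-coordinates and on $P$ via the $G$-action, never on the base directions, so on cohomology, after applying Künneth and the Leray--Hirsch splitting, they are base-linear. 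Once that is in place, the vanishing of higher cohomotopy is the standard acyclicity of the cobar/bar resolution and the result drops out. Since this is exactly Quillen's argument from \cite[p.\ 566]{QuillenEqI} adapted to a twisted coefficient-free setting, I would keep the write-up brief and cite that source for the homological bookkeeping.
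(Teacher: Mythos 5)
Your route is genuinely different from the paper's, and it has a gap at exactly the step you flag as the main obstacle. The problem is the claim that the Leray--Hirsch splitting identifies the cosimplicial object $C^\bullet = H^*(G;\bF)^{\otimes \bullet}\otimes H^*(P;\bF)$ with $H^*(X;\bF)\otimes_\bF(\text{standard cobar resolution of } \bF \text{ over } H^*(G;\bF))$. The coface maps are indeed $H^*(X;\bF)$-linear (since $a^*p^* = 1\otimes p^*$), but ``$H^*(X)$-linear'' is weaker than ``of the form $\mathrm{id}_{H^*(X)}\otimes f$'': an $H^*(X)$-linear map between free modules is a matrix with entries in $H^*(X)$, and for the coaction $a^*:H^*(P)\to H^*(G)\otimes H^*(P)$ these entries genuinely involve positive-degree classes of $H^*(X)$ in general. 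Concretely, if $c$ is a chosen Leray--Hirsch class, then $a^*(c)-1\otimes c-\bar{c}\otimes 1-\cdots$ has components which, when expanded in the basis $\{p^*(x)\cdot c_j\}$, pick up positive-degree $x$'s whenever the chosen classes cannot be taken multiplicatively compatible (already visible for a torus bundle with a ``bad'' choice of $c_j$'s, or whenever there are multiplicative extension problems in the Serre spectral sequence). Your justification --- that the simplicial structure maps of $[q]\mapsto G^{\times q}\times P$ never touch the base directions --- is a statement about the spaces, not about the chosen cohomology classes, and does not imply the algebraic non-mixing. What is true is only that the comodule $H^*(P;\bF)$ is \emph{extended modulo the filtration by the augmentation ideal of $H^*(X;\bF)$}; to rescue your argument you would filter by $p^*H^{\geq k}(X;\bF)\cdot H^*(P;\bF)$, observe that the associated graded comodule is cofree so has vanishing higher $\Cotor$, and run a (convergent, since the filtration is finite in each total degree) spectral sequence. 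You would also need to be a little more careful about why $H^*(X;\bF)$ agrees with the abutment of the descent spectral sequence of the simplicial space (properness of the bar construction, collapse and convergence when $E_2$ is concentrated in the $0$-column); these points are standard but not free.

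For comparison, the paper's proof avoids all of this machinery by a two-line faithfully flat descent: Leray--Hirsch makes $H^*(P;\bF)$ a free, hence faithfully flat, $H^*(X;\bF)$-module, so one may check the equaliser property after applying $-\otimes_{H^*(X;\bF)}H^*(P;\bF)$; Leray--Hirsch again identifies $H^*(P)\otimes_{H^*(X)}H^*(P)$ with $H^*(P\times_X P)$, turning the diagram into the same diagram for the pulled-back bundle $\mathrm{pr}_2:P\times_X P\to P$; and that bundle is trivialised by its diagonal section, so its diagram is a split equaliser (the splitting being induced by $a^*$). If you want to keep your cosimplicial framing, note that this base-change step is precisely what replaces your ``trivialise the cobar complex'' step, and it sidesteps the twisting issue entirely because after pulling back to $P$ the bundle is honestly trivial rather than merely cohomologically free.
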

\begin{proof}
Let us leave $\bF$ implicit. By the Leray--Hirsch property $H^*(P)$ is a free $H^*(X)$-module and hence is faithfully flat. Thus it suffices to prove that the diagram is an equaliser diagram after applying $- \otimes_{H^*(X)} H^*(P)$. By the Leray--Hirsch property we also have $H^*(P) \otimes_{H^*(X)} H^*(P) \overset{\sim}\to H^*(P \times_X P)$. Thus it suffices to show that
\begin{equation*}
\begin{tikzcd}
H^*(P) \rar{\mathrm{pr}_2^*} & H^*(P \times_X P) \ar[r,shift left=.75ex,"a^*"]
  \ar[r,shift right=.75ex,swap,"1 \otimes \mathrm{Id}"]& H^*(G) \otimes H^*(P \times_X P)
\end{tikzcd}
\end{equation*}
is an equaliser diagram, which is the same question for the principal $G$-bundle $\mathrm{pr}_2 : P \times_X P \to P$. But this principal $G$-bundle has a section given by the diagonal map, which trivialises it: this trivialisation identifies the diagram with
\begin{equation*}
\begin{tikzcd}
H^*(P) \rar{1 \otimes \mathrm{Id}} & H^*(G) \otimes H^*(P) \ar[r,shift left=.75ex,"\mu^* \otimes \mathrm{Id}"]
  \ar[r,shift right=.75ex,swap,"1 \otimes\mathrm{Id}"]& H^*(G) \otimes H^*(G) \otimes H^*(P)
\end{tikzcd}
\end{equation*}
which is indeed an equaliser diagram as it has a contraction induced by $a^*$.
\end{proof}

We adapt the proof of \cite[Theorem 3.15]{KR-WTorelli}, supposing for concreteness that $n$ is odd. Consider the tangential structure $\theta \times Y : B\SO(2n) \langle n \rangle \times Y \to B\SO(2n)$ with $Y=K(W^\vee, n+1)$ and $W$ a generic rational vector space. Then we have $H^*(Y ; \bQ) \cong \mathrm{Sym}^*(W[n+1])$, the symmetric algebra on the vector space $W$ places in (even) degree $n+1$. If $n$ is even then like at the end of the proof of \cite[Theorem 3.15]{KR-WTorelli} we would take $Y=K(W^\vee, n+2)$ instead, so $H^*(Y;\bQ)$ would still be a symmetric algebra. Apart from this there is no essential difference, and we will not comment further on the differences in the case $n$ even.

There are associated universal $W_g$-bundles
$$\pi: E^{\theta} \lra B\Diff^{\theta}(W_g)\quad\quad\quad\quad
\pi^Y : E^{\theta \times Y} \lra B\Diff^{\theta \times Y}(W_g)$$
and an evaluation map $\ell : E^{\theta \times Y} \to Y$. Neglecting the ``maps to $Y$'' part of the tangential structure gives a homotopy fibre sequence
$$\map(W_g, Y) \lra B\Diff^{\theta \times Y}(W_g) \lra B\Diff^{\theta}(W_g).$$
We can take $Y$ to be a topological abelian group, which then acts fibrewise on the map $\theta \times Y$ and hence acts compatibly on $E^{\theta \times Y}$ and $B\Diff^{\theta \times Y}(W_g)$. Using this we can form the homotopy fibre sequence
$$\map(W_g, Y) \hcoker Y \lra B\Diff^{\theta \times Y}(W_g) \hcoker Y \lra B\Diff^{\theta}(W_g).$$
The space $\map(W_g, Y) \hcoker Y$ is a $K(H^n(W_g;\bQ) \otimes W^\vee,1)$, so there is an identification of graded local coefficient systems
$$\cH^*(\map(W_g, Y) \hcoker Y;\bQ) = \Lambda^*(\cH \otimes W[1]).$$
This is natural in the vector space $W$, and scaling by $u \in \bQ^\times$ acts on $\Lambda^k(\cH \otimes W[1])$ by $u^k$. It follows that it acts this way on the $k$th row of the Serre spectral sequence
$$E_2^{p,q} = H^p(B\Diff^{\theta}(W_g); \Lambda^q(\cH \otimes W[1])) \Rightarrow H^{p+q}(B\Diff^{\theta \times Y}(W_g) \hcoker Y;\bQ).$$
As the differentials in this spectral sequence must be equivariant for this $\bQ^\times$-action, it follows that they must all be trivial. Furthermore this action gives a weight decomposition of both sides, which identifies
$$H^*(B\Diff^{\theta}(W_g); \Lambda^k(\cH \otimes W)) \cong H^{*+k}(B\Diff^{\theta \times Y}(W_g) \hcoker Y;\bQ)^{(k)},$$
the weight $k$-subspace.

To access the latter groups, we use that there is a map
$$\alpha: B\Diff^{\theta \times Y}(W_g) \lra \Omega^\infty_0(\mathrm{MT}\theta \wedge Y_+)$$
which by the main theorems of \cite{Boldsen, oscarresolutions, GMTW} (for $2n=2$) and \cite{grwstab1, grwcob, grwstab2} for ($2n \geq 6$) is an isomorphism on cohomology in a stable range of degrees. Here $\mathrm{MT}\theta$ is the Thom spectrum of $-\theta^*\gamma_{2n}$, so writing $u_{-2n} \in H^{-2n}(\mathrm{MT}\theta;\bQ)$ for its Thom class, by the Thom isomorphism we have
$$H^*(\mathrm{MT}\theta;\bQ) \cong u_{-2n} \cdot H^*(B\SO(2n) \langle n \rangle ;\bQ) = u_{-2n}\cdot \bQ[p_{\lceil \tfrac{n+1}{4}\rceil}, \ldots, p_{n-1}, e].$$
The rational cohomology of $\Omega^\infty_0(\mathrm{MT}\theta \wedge Y_+)$ is then given by 
$$\mathrm{Sym}^*([H^*(\mathrm{MT}\theta;\bQ) \otimes \mathrm{Sym}^*(W[n+1])]_{>0}),$$
which can be considered as the free (graded-)commutative algebra on the even-degree classes $\kappa_{c, w_1 \cdots w_r}$ with $c \in  \bQ[p_{\lceil \tfrac{n+1}{4}\rceil}, \ldots, p_{n-1}, e]$ and $w_i \in W$, modulo linearity in $c$ and in the $w_i$, and modulo commutativity of the $w_i$. The pullbacks of these classes along $\alpha$ we again denote $\kappa_{c, w_1 \cdots w_r}$, and they may be described intrinsically as the fibre integrals $\pi^Y_!(c(T_{\pi^Y} E^{\theta \times Y}) \cdot \ell^*(w_1 \cdots w_r))$. 

\begin{lemma}
 There are unique classes $\bar{\kappa}_{c, w_1 \cdots w_r} \in H^*(B\Diff^{\theta \times Y}(W_g) \hcoker Y;\bQ)$ which pull back to
$$\sum_{I \sqcup J = \{1,2,\ldots,r\}} \kappa_{c, w_I} \cdot \prod_{j \in J} (-\tfrac{1}{\chi} \kappa_{e, w_j}) \in H^*(B\Diff^{\theta \times Y}(W_g);\bQ),$$
and in a stable range of degrees $H^*(B\Diff^{\theta \times Y}(W_g) \hcoker Y;\bQ)$ is the free graded-commutative algebra on the classes $\bar{\kappa}_{c, w_1 \cdots w_r}$, modulo linearity in $c$ and in the $w_i$, commutativity of the $w_i$, and modulo $\bar{\kappa}_{e, w_1}=0$. 
\end{lemma}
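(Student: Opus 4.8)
The plan is to work with $X := B\Diff^{\theta\times Y}(W_g)$, its homotopy quotient $\tilde X := B\Diff^{\theta\times Y}(W_g)\hcoker Y$, the action $a\colon Y\times X\to X$, and the quotient map $q\colon X\to\tilde X$, which (being the Borel construction) is equivalent to a principal $Y$-bundle and is simultaneously the inclusion of the fibre of $\tilde X\to BY$. The main tool will be Lemma \ref{lem:QuillenLemma} applied to $q$, so the first task is to verify its hypothesis, the Leray--Hirsch property. The fibre of $q$ over the basepoint is the $Y$-orbit of a chosen point $x_0\in X$; unwinding the homotopy quotient, $Y$ acts on $\theta\times Y$-structures by adding a constant in the $Y$-factor, so this orbit consists of structures that are constant along $W_g$ and carries the trivial bundle $W_g\times Y\to Y$ with $\ell = \mathrm{pr}_Y$. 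Hence $\kappa_{e, w}\vert_Y = (\mathrm{pr}_Y)_!\bigl(\mathrm{pr}_{W_g}^*e(TW_g)\cdot\mathrm{pr}_Y^* w\bigr) = \chi\cdot w$ for $w\in W = H^{n+1}(Y;\bQ)$. Since $H^*(Y;\bQ) = \mathrm{Sym}^*(W[n+1])$ is polynomial on $W$, the classes $-\tfrac1\chi\kappa_{e,w}$, for $w$ in a basis $w^{(1)},\ldots,w^{(N)}$ of $W$, restrict to its polynomial generators, so Leray--Hirsch holds for $q$; in particular $q^*$ is split injective, which already yields the asserted uniqueness of the $\bar\kappa_{c, w_1\cdots w_r}$.

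Next I would apply Lemma \ref{lem:QuillenLemma}: it identifies $q^*$ with an isomorphism from $H^*(\tilde X;\bQ)$ onto the equaliser of $a^*$ and $1\otimes\mathrm{Id}\colon H^*(X)\rightrightarrows H^*(Y)\otimes H^*(X)$, i.e.\ onto the subalgebra of $Y$-coinvariants. To compute with this one needs the coaction on the $\kappa$-classes. The $Y$-action replaces $\ell$ by $\ell + \mathrm{pr}_Y$, so $\ell^* w \mapsto \ell^* w + \bar w$ with $\bar w$ pulled back from the $Y$-factor; since each $w\in H^{n+1}(Y;\bQ)$ is primitive, expanding the product defining $\kappa_{c, w_1\cdots w_r} = \pi^Y_!(c\cdot\ell^*(w_1\cdots w_r))$ and using base change and the projection formula gives
$$a^*\kappa_{c, w_1\cdots w_r} = \sum_{I\sqcup J = \{1,\ldots, r\}} \Bigl(\prod_{j\in J}w_j\Bigr)\otimes\kappa_{c, w_I}\ \in\ H^*(Y)\otimes H^*(X).$$
In particular, using $\kappa_e = \chi$, the class $\mu_w := -\tfrac1\chi\kappa_{e,w}$ satisfies $a^*\mu_w = 1\otimes\mu_w - w\otimes 1$, and a short combinatorial computation (whose heart is the cancellation $\sum_{B'\subseteq Q}(-1)^{|B'|} = [Q = \emptyset]$) shows that the elements
$$\bar\kappa^{\mathrm{pre}}_{c, w_1\cdots w_r} := \sum_{I\sqcup J = \{1,\ldots,r\}}\kappa_{c, w_I}\prod_{j\in J}(-\tfrac1\chi\kappa_{e, w_j})$$
are $Y$-coinvariant and that $\bar\kappa^{\mathrm{pre}}_{e, w} = 0$. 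By the identification above they are therefore $q^*\bar\kappa_{c, w_1\cdots w_r}$ for unique classes $\bar\kappa_{c, w_1\cdots w_r}\in H^*(\tilde X;\bQ)$, which is the required pullback formula.

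Finally I would identify $H^*(\tilde X;\bQ)$ as a ring. In a stable range the map $\alpha$ identifies $H^*(X;\bQ)$ with the polynomial algebra on the classes $\kappa_{c, w_1\cdots w_r}$, with $c$ a monomial in $e, p_{\lceil (n+1)/4\rceil},\ldots, p_{n-1}$ and the $w_i$ ranging over the basis of $W$. I would change this generating set to the one consisting of the $\mu_{w^{(k)}}$ together with the $\bar\kappa^{\mathrm{pre}}_{c, w_1\cdots w_r}$ for all other index tuples (all positive-degree ones except $c = e$ with a single $w$). This is a legitimate triangular change of generators, because $\bar\kappa^{\mathrm{pre}}_{c, w_1\cdots w_r} \equiv \kappa_{c, w_1\cdots w_r}$ modulo decomposables in those cases while $\mu_{w^{(k)}}$ is a nonzero multiple of $\kappa_{e, w^{(k)}}$, so the new classes still represent a basis of the indecomposables and hence generate $H^*(X;\bQ)$ freely. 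Writing $R$ for the subalgebra generated by the $\bar\kappa^{\mathrm{pre}}$-generators (a subalgebra of the coinvariants), one gets $H^*(X;\bQ) = R[\mu_{w^{(1)}},\ldots,\mu_{w^{(N)}}]$; since the commuting derivations $\partial_i$ induced by the coaction (dual to $w^{(i)}$) annihilate $R$ and satisfy $\partial_i\mu_{w^{(j)}} = -\delta_{ij}$, they act as $-\partial/\partial\mu_{w^{(i)}}$, so $\bigcap_i\ker\partial_i = R$. Combined with Lemma \ref{lem:QuillenLemma} and the injectivity of $q^*$, this shows $H^*(\tilde X;\bQ)$ is, in a stable range, free graded-commutative on the $\bar\kappa_{c, w_1\cdots w_r}$ subject to linearity, commutativity of the $w_i$, and $\bar\kappa_{e, w} = 0$.

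I expect the main obstacle to be the opening step: correctly unwinding the homotopy quotients to identify the fibre of $q$ and compute the restriction of the $\kappa$-classes to it, which is precisely what makes the Leray--Hirsch property (and hence Lemma \ref{lem:QuillenLemma}) available; a secondary technical point is checking that the proposed replacement of polynomial generators of $H^*(X;\bQ)$ is genuinely triangular, so that the coinvariant subalgebra $R$ is freely generated by the $\bar\kappa^{\mathrm{pre}}$.
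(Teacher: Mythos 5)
Your proposal is correct and follows essentially the same route as the paper: Leray--Hirsch for the principal $Y$-bundle via $j^*\kappa_{e,w}=\chi w$, the equaliser description from Lemma \ref{lem:QuillenLemma}, the coaction formula from primitivity of the $w_i$, the binomial cancellation to verify coinvariance, and freeness from the congruence $\bar{\kappa}_{c,w_1\cdots w_r}\equiv\kappa_{c,w_1\cdots w_r}$ modulo the ideal generated by the $\kappa_{e,w}$. Your final step merely spells out in more detail (triangular change of generators, derivations dual to the $w^{(i)}$) what the paper dispatches by citing the Leray--Hirsch property again.
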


\begin{proof}
We wish to apply Lemma \ref{lem:QuillenLemma} to the principal $Y$-bundle
\begin{equation}\label{eq:YBun}
B\Diff^{\theta \times Y}(W_g) \lra B\Diff^{\theta \times Y}(W_g) \hcoker Y.
\end{equation}
First observe that the fibre inclusion $j : Y \to B\Diff^{\theta \times Y}(W_g)$ classifies the $W_g$-bundle $\mathrm{pr}_1: Y \times W_g \to Y$ equipped with the product $\theta$-structure and the map $\ell = \mathrm{pr}_1 : Y \times W_g \to Y$. Thus for any $w \in W$ we have
$$j^* \kappa_{e, w} = \chi w \in H^{n+1}(Y;\bQ),$$
and so \eqref{eq:YBun} satisfies the Leray--Hirsch property. 

Lemma \ref{lem:QuillenLemma} then describes $H^*(B\Diff^{\theta \times Y}(W_g) \hcoker Y;\bQ)$ as the equaliser of
\begin{equation}\label{eq:EqualiserFormula}
\begin{tikzcd}
 H^*(B\Diff^{\theta \times Y}(W_g);\bQ) \ar[r,shift left=.75ex,"a^*"]
  \ar[r,shift right=.75ex,swap,"1 \otimes \mathrm{Id}"]& H^*(Y;\bQ) \otimes H^*(B\Diff^{\theta \times Y}(W_g);\bQ).
\end{tikzcd}
\end{equation}
In a stable range $H^*(B\Diff^{\theta \times Y}(W_g);\bQ)$ is described in terms of the classes $\kappa_{c, w_1 \cdots w_r}$, so to make use of this equaliser description we must determine how these classes pull back along the action map
$$a : Y \times B\Diff^{\theta \times Y}(W_g) \lra B\Diff^{\theta \times Y}(W_g).$$
This map classifies the $W_g$-bundle $Y \times \pi^Y : Y \times E^{\theta \times Y} \to Y \times B\Diff^{\theta \times Y}(W_g)$ equipped with the structure map $Y \times E^{\theta \times Y} \overset{Y \times \ell}\to Y \times Y \overset{\cdot}\to Y$. As the $w_i \in W = H^{n+1}(Y;\bQ)$ are primitive with respect to the coproduct induced by the multiplication on $Y$, we have
\begin{align*}
a^*(\kappa_{c, w_1 \cdots w_r}) &= (Y \times \pi^Y)_!((1 \times c(T_{\pi^Y} E^{\theta \times Y})) \cdot \prod_{i=1}^r (w_i \times 1 + 1 \times \ell^*(w_i)))\\
&=\sum_{I \sqcup J = \{1,2,\ldots,r\}} w_I \times \kappa_{c, w_J}.
\end{align*}

Our goal now is to show that the classes defined by
$$\bar{\kappa}_{c, w_1 \cdots w_r} := \sum_{I \sqcup J = \{1,2,\ldots,r\}} \kappa_{c, w_I} \cdot \prod_{j \in J} (-\tfrac{1}{\chi} \kappa_{e, w_j}) \in  H^*(B\Diff^{\theta \times Y}(W_g);\bQ)$$
are equalised by the maps \eqref{eq:EqualiserFormula}, so by Lemma \ref{lem:QuillenLemma} descend to unique classes of the same name in $H^*(B\Diff^{\theta \times Y}(W_g) \hcoker Y;\bQ)$. To see this, we calculate using the formula above that
\begin{align*}
a^*(\bar{\kappa}_{c, w_1 \cdots w_r}) &= \sum_{I \sqcup J = \{1,2,\ldots,r\}} \left(\sum_{S \sqcup T = I} w_S \times \kappa_{c, w_T}\right) \cdot (-1)^{|J|} \prod_{j \in J} (w_j \times 1 + \tfrac{1}{\chi} 1 \times \kappa_{e, w_j})\\
&= \sum_{S \sqcup T \sqcup U \sqcup V = \{1,2,\ldots,r\}} (-1)^{|U|} w_{S \sqcup U} \times \left(\kappa_{c, w_T} \cdot \prod_{v \in V} (-\tfrac{1}{\chi} \kappa_{e, w_v})\right).
\end{align*}
For each $A \subseteq \{1,2,\ldots, r\}$ the coefficient of $w_A$ is
$$ \left(\sum_{U \subseteq A}(-1)^{|U|}\right) \left( \sum_{T \sqcup V = \{1,\ldots,r\} \setminus A} \kappa_{c, w_T} \cdot \prod_{v \in V} (-\tfrac{1}{\chi} \kappa_{e, w_v})\right),$$
and $\sum_{U \subseteq A}(-1)^{|U|}$ is the binomial expansion of $(1-1)^{|A|}$ so vanishes if $A \neq \emptyset$ and is $1$ if $A=\emptyset$, which shows that $a^*(\bar{\kappa}_{c, w_1 \cdots w_r}) = 1 \times \bar{\kappa}_{c, w_1 \cdots w_r}$ as required.

Finally, that these classes (except $\bar{\kappa}_{e, w_1}=0$) freely generate the $\bQ$-algebra $H^*(B\Diff^{\theta \times Y}(W_g) \hcoker Y;\bQ)$ in a stable range follows from the fact that the $\kappa_{c, w_1 \cdots w_r}$ freely generate $H^*(B\Diff^{\theta \times Y}(W_g);\bQ)$ in a stable range, together with the observation that $\bar{\kappa}_{c, w_1 \cdots w_r} \equiv {\kappa}_{c, w_1 \cdots w_r}$ modulo the ideal generated by classes $\kappa_{e, w}$ and the Leray--Hirsch property again.
\end{proof}

Let us provide a ``fibre-integral" interpretation of the classes we have just constructed. Consider the map of principal $Y$-bundles
\begin{equation*}
\begin{tikzcd}
Y \rar{i} \arrow[d, equal] & E^{\theta \times Y} \rar \dar{\pi^Y} & E^{\theta \times Y}  \hcoker Y \dar{\pi^Y \hcoker Y}\\
Y \rar{j} & B\Diff^{\theta \times Y}(W_g) \rar & B\Diff^{\theta \times Y}(W_g)  \hcoker Y.
\end{tikzcd}
\end{equation*}
The composition $\ell \circ i : Y \to Y$ is the identity, so $i^* \ell^*(w) = w \in H^{n+1}(Y;\bQ)$. We showed in the proof above that $j^* \kappa_{e, w} = \chi w \in H^{n+1}(Y;\bQ)$, so in particular both these principal $Y$-bundles satisfy the Leray--Hirsch property. Together these give that 
$$i^*(\ell^*(w) - \tfrac{1}{\chi} (\pi^Y)^*\kappa_{e, w})=0.$$ 
As $Y$ is $n$-connected it follows from the Serre spectral sequence that there exists a unique class $\bar{\ell}^*(w) \in H^{n+1}(E^{\theta \times Y}  \hcoker Y;\bQ)$ which pulls back to $\ell^*(w) - \tfrac{1}{\chi} (\pi^Y)^*\kappa_{e, w}$. 

\begin{lemma}\label{lem:KappaBarAsIntegral}
We have
\begin{equation*}
\bar{\kappa}_{c, w_1 \cdots w_r} = (\pi^Y \hcoker Y)_!(c \cdot \bar{\ell}^*(w_1) \cdots \bar{\ell}^*(w_r)) \in H^*(B\Diff^{\theta \times Y}(W_g) \hcoker Y;\bQ).
\end{equation*}
\end{lemma}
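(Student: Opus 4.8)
The plan is to check the asserted identity after pulling it back along the principal $Y$-bundle projection $p : B\Diff^{\theta \times Y}(W_g) \to B\Diff^{\theta \times Y}(W_g) \hcoker Y$. Since this bundle satisfies the Leray--Hirsch property (established in the proof of the preceding lemma, via $j^*\kappa_{e,w} = \chi w$), the map $p^*$ is injective, and by that lemma $\bar{\kappa}_{c, w_1 \cdots w_r}$ is \emph{defined} by the property that
\[p^*\bar{\kappa}_{c, w_1 \cdots w_r} = \sum_{I \sqcup J = \{1,\ldots,r\}} \kappa_{c, w_I} \cdot \prod_{j \in J}\bigl(-\tfrac{1}{\chi}\kappa_{e, w_j}\bigr).\]
So it suffices to show that $(\pi^Y \hcoker Y)_!(c \cdot \bar{\ell}^*(w_1)\cdots\bar{\ell}^*(w_r))$ has this same pullback along $p$.

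For this I would use the right-hand square of the diagram preceding the lemma, relating $\pi^Y$ and $\pi^Y \hcoker Y$ via the quotient maps $p$ and $q : E^{\theta \times Y} \to E^{\theta \times Y} \hcoker Y$. This square is homotopy cartesian: both horizontal maps are quotients by free $Y$-actions and $\pi^Y$ is $Y$-equivariant, so $E^{\theta \times Y} \simeq (E^{\theta \times Y} \hcoker Y) \times_{B\Diff^{\theta \times Y}(W_g)\hcoker Y} B\Diff^{\theta \times Y}(W_g)$. Moreover $\pi^Y \hcoker Y$ is again an oriented $W_g$-bundle, as the $Y$-action only moves the $Y$-coordinate of the tangential structure $\theta \times Y$ and so preserves fibrewise orientations. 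Hence fibre integration commutes with this base change, and $p^*\circ(\pi^Y \hcoker Y)_! = \pi^Y_!\circ q^*$.

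It then remains to compute $q^*$ of the integrand, which is immediate: vertical tangent bundles are compatible under base change so $q^*(c) = c(T_{\pi^Y} E^{\theta \times Y})$, and by the defining property of $\bar{\ell}^*(w_i)$ we have $q^*\bar{\ell}^*(w_i) = \ell^*(w_i) - \tfrac{1}{\chi}(\pi^Y)^*\kappa_{e, w_i}$. Expanding the product (the $w_i$ having even degree, so no signs intervene), applying the projection formula to pull the $(\pi^Y)^*\kappa_{e, w_j}$ out of the fibre integral, and recognising $\pi^Y_!\bigl(c(T_{\pi^Y} E^{\theta \times Y}) \cdot \ell^*(w_I)\bigr) = \kappa_{c, w_I}$, one recovers exactly $\sum_{I \sqcup J}\kappa_{c, w_I}\cdot\prod_{j\in J}(-\tfrac{1}{\chi}\kappa_{e, w_j})$, as wanted. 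The argument is entirely formal; the only step needing any care is the base-change claim — that the square is genuinely homotopy cartesian and that $\pi^Y \hcoker Y$ is an oriented $W_g$-bundle — which is what legitimises the push--pull formula $p^*\circ(\pi^Y \hcoker Y)_! = \pi^Y_!\circ q^*$.
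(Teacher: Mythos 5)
Your proposal is correct and follows essentially the same route as the paper: both verify the identity after pulling back along the principal $Y$-bundle to $B\Diff^{\theta \times Y}(W_g)$ (where $p^*$ is injective by Leray--Hirsch), use the defining property $q^*\bar{\ell}^*(w) = \ell^*(w) - \tfrac{1}{\chi}(\pi^Y)^*\kappa_{e,w}$, and expand the product. The only difference is that you make explicit the base-change step $p^* \circ (\pi^Y \hcoker Y)_! = \pi^Y_! \circ q^*$, which the paper leaves implicit.
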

\begin{proof}
As the lower of the above principal $Y$-bundles satisfies the Leray--Hirsch property, this identity may be verified after pulling back to $B\Diff^{\theta \times Y}(W_g)$. In $H^*(E^{\theta \times Y};\bQ)$ we have $\bar{\ell}^*(w)  =\ell^*(w) - \tfrac{1}{\chi} (\pi^Y)^*\kappa_{e, w}$, so expanding out gives
\begin{align*}
(\pi^Y)_!(c \cdot \bar{\ell}^*(w_1) \cdots \bar{\ell}^*(w_r)) &= (\pi^Y)_!(c \cdot \prod_{i=1}^r (\ell^*(w_i) - \tfrac{1}{\chi} (\pi^Y)^*\kappa_{e, w_i}))\\
&= \sum_{I \sqcup J = \{1,2,\ldots,r\}} \kappa_{c, w_I} \cdot \prod_{j \in J} (-\tfrac{1}{\chi} \kappa_{e, w_j})
\end{align*}
as required.
\end{proof}

The classes $\bar{\kappa}_{c, w_1 \cdots w_r}$ provide an isomorphism
$$\mathrm{Sym}^*\left(\frac{[H^*(\mathrm{MT}\theta;\bQ) \otimes \mathrm{Sym}^*(W[n+1])]_{>0}}{u_{-2n} \cdot e \otimes W[n+1]}\right) \lra H^*(B\Diff^{\theta \times Y}(W_g) \hcoker Y ; \bQ)$$
in a stable range, natural in $W$, which with the discussion above gives an identification of graded vector spaces
$$H^*(B\Diff^{\theta}(W_g); \Lambda^*(\cH \otimes W[1])) \cong \mathrm{Sym}^*\left(\frac{[H^*(\mathrm{MT}\theta;\bQ) \otimes \mathrm{Sym}^*(W[n+1])]_{>0}}{u_{-2n} \cdot e \otimes W[n+1]}\right)$$
natural in $W$. 

We now proceed as in the proof of \cite[Theorem 3.15]{KR-WTorelli}, which we briefly summarise. As the identification just given is natural in $W$, we can consider it as an identification of (graded) Schur functors of $W$, and therefore identify their coefficients (i.e.\ Schur-Weyl duality). This is implemented by letting $\rm{GL} := \colim_{n \to \infty} \rm{GL}_n(\bQ)$ and considering the two sides as defining (graded) polynomial representations of $\rm{GL}$ by setting $W := \colim_{n \to \infty} \bQ^n$. Writing $\hat{W}_* := \lim_{n \to \infty} \mathrm{Hom}(\bQ^n, \bQ)$, Schur--Weyl duality may be expressed as the equivalence of categories
$$
 \begin{tikzcd}[every arrow/.append style={shift left}]
 (\bQ\text{-}\mathsf{mod}^{\mathsf{FB}})^f \arrow{rrr}{F \mapsto W^{\otimes -} \otimes^{\mathsf{FB}} F} &&& \mathsf{Rep}^{\mathsf{pol}}(\rm{GL}), \arrow{lll}{[S \mapsto [\hat{W}_*^{\otimes S} \otimes U]^{\rm{GL}}] \mapsfrom U}    
 \end{tikzcd}
$$
where $(\bQ\text{-}\mathsf{mod}^{\mathsf{FB}})^f$ denotes the functors from the category $\mathsf{FB}$ of finite sets and bijections to the category of finite-dimensional $\bQ$-modules; it gives a similar equivalence between categories of graded objects. Using that $n$ is odd, the left-hand side above can be expressed as
$$H^*(B\Diff^{\theta}(W_g); \Lambda^*(\cH \otimes W[1])) \cong W^{\otimes -} \otimes^{\mathsf{FB}} \left(H^*(B\Diff^{\theta}(W_g); \cH^{\otimes -}) \otimes {\det}^{\otimes n} \right),$$
up to a small reindexing of degrees. It is elementary to see that the right-hand side may be expressed as $W^{\otimes -} \otimes^{\mathsf{FB}} \mathcal{P}^\text{bis}(-,\cV)_{\geq 0}$ up to the analogous reindexing of degrees, where $ \mathcal{P}(-,\cV)_{\geq 0} \to \mathcal{P}^\text{bis}(-,\cV)_{\geq 0}$ is the quotient by those partitions containing a part of size 1 labelled by $e \in \cV_{2n}$. Thus, carrying the factor of ${\det}^{\otimes n}$ to the other side, the above identification yields a natural transformation
\begin{equation}\label{eq:MapFromPart}
\mathcal{P}^\text{bis}(-,\cV)_{\geq 0} \otimes {\det}^{\otimes n} \lra H^*(B\Diff^{\theta}(W_g); \cH^{\otimes -})
\end{equation}
of lax symmetric monoidal functors $\mathsf{FB} \to \mathsf{Gr}(\bQ\text{-}\mathsf{mod})$ which is an isomorphism in a stable range.

On the other hand assigning to a labelled part the corolla with that label gives a natural transformation
\begin{equation}\label{eq:MapToGraph}
\mathcal{P}^\text{bis}(-,\cV)_{\geq 0} \otimes {\det}^{\otimes n} \lra \mathcal{G}\mathrm{raph}^\theta(-)_g,
\end{equation}
of lax symmetric monoidal functors $\mathsf{FB} \to \mathsf{Gr}(\bQ\text{-}\mathsf{mod})$, and we claim that using this \eqref{eq:MapFromPart} factors through the map $\bar{\kappa} : \mathcal{G}\mathrm{raph}^\theta(-)_g \to H^*(B\Diff^{\theta}(W_g); \cH^{\otimes -})$. Assuming this claim for now, observe that using the contraction relations in Definition \ref{defn:GraphSpaces} (iv) (d$^{\prime\prime\prime}$) to contract all edges shows that \eqref{eq:MapToGraph} is surjective, which with the fact that \eqref{eq:MapFromPart} is an isomorphism in a stable range will show that the map $\bar{\kappa}$ is an isomorphism in a stable range too (as well as the map \eqref{eq:MapToGraph}).

It remains to show the factorisation, i.e.\ that the map \eqref{eq:MapFromPart} sends a part of size $a$ labelled by $c \in \cV$ to the class $\kappa_{\bar{\epsilon}^a c}$. We again proceed as in the relevant step of the proof of \cite[Theorem 3.15]{KR-WTorelli}. There is a fibration sequence
$$\map(W_g, Y) \lra E^{\theta \times Y} \lra E^{\theta}$$
and so, taking homotopy orbits for the fibrewise $Y$-action, a fibration sequence
$$\map(W_g, Y) \hcoker Y \lra E^{\theta \times Y} \hcoker Y \lra E^{\theta}.$$
Again by functoriality in $W$ the associated Serre spectral sequence collapses to identify the weight decomposition as
$$H^*(E^{\theta} ; \Lambda^k(\cH \otimes W)) \cong H^{*+k}(E^{\theta \times Y} \hcoker Y;\bQ)^{(k)}.$$
Given the description in Lemma \ref{lem:KappaBarAsIntegral} we must show that the map
$$\bar{\ell}(-) : W \lra H^{n+1}(E^{\theta \times Y}  \hcoker Y;\bQ)^{(1)} \cong H^n(E^\theta ; \cH) \otimes W$$
is given by $w \mapsto \bar{\epsilon} \otimes w$, which is the analogue of \cite[Claim 3.16]{KR-WTorelli}. As it is natural in the vector space $W$ it must certainly be given by $\bar{\ell}(w) = x \otimes w$ for some $x \in H^n(E^\theta ; \cH)$, and we must show that $x = \bar{\epsilon}$. That the restriction of $x$ to the fibre $W_g$ of $\pi : E^\theta \to B\Diff^{\theta}(W_g)$ is given by coevaluation may be done precisely as in \cite[Claim 3.16]{KR-WTorelli}. By the characterisation of $\bar{\epsilon}$ it remains to check that
$$\tfrac{1}{\chi}(\pi^Y)_!(e \cdot \bar{\ell}^*(w)) = 0 \in H^{n+1}(B\Diff^{\theta \times Y}(W_g) \hcoker Y;\bQ).$$
By the Leray--Hirsch property this may be checked after pulling back to $B\Diff^{\theta \times Y}(W_g)$, but as $\bar{\ell}^*(w) = \ell^*(w) - \tfrac{1}{\chi} (\pi^Y)^*\kappa_{e, w} \in H^{n+1}(E^{\theta \times Y};\bQ)$ by definition, the vanishing is immediate.

\subsection{Comparisons}

There are natural maps
\begin{equation*}
\begin{tikzcd}
B\mathrm{Diff}(W_g, D^{2n}) \rar{a} \arrow[d, equals]& B\mathrm{Diff}^\theta(W_g, *) \rar{b} \dar{c}& B\mathrm{Diff}^\theta(W_g) \dar{d}\\
B\mathrm{Diff}(W_g, D^{2n}) \rar{e}& B\mathrm{Diff}^{+}(W_g, *) \rar{f}& B\mathrm{Diff}^{+}(W_g)
\end{tikzcd}
\end{equation*}
which each induce maps on $H^*(-; \mathcal{H}^{\otimes S})$. There are corresponding maps of spaces of graphs
\begin{equation*}
\begin{tikzcd}
\mathcal{G}\mathrm{raph}_1(-)_g  \arrow[d, equals]& \mathcal{G}\mathrm{raph}_*^\theta(-)_g \arrow[l, swap, "a^*"]& \mathcal{G}\mathrm{raph}^\theta(-)_g \arrow[l, swap, "b^*"]\\
\mathcal{G}\mathrm{raph}_1(-)_g & \mathcal{G}\mathrm{raph}_*(-)_g \arrow[l, swap, "e^*"] \arrow[u, swap, "c^*"]& \mathcal{G} \mathrm{raph}(-)_g \arrow[u, swap, "d^*"] \arrow[l, swap, "f^*"]
\end{tikzcd}
\end{equation*}
given as follows. The maps $c^*$ and $d^*$ are induced by the projections $\mathcal{W} \to \mathcal{V}$. The maps $a^*$ and $e^*$ are induced by applying the augmentations $\mathcal{V} \to \bQ$ and $\mathcal{W} \to \bQ$ to the second tensor factor. The maps $b^*$ and $f^*$ are more subtle, as they involve converting between \textcolor{NavyBlue}{blue} graphs and \textcolor{Mahogany}{red} graphs, via the formula of Proposition \ref{prop:Pullback}. Graphically it is given by 
\begin{figure}[h]
	\begin{tikzpicture}[arrowmark/.style 2 args={decoration={markings,mark=at position #1 with \arrow{#2}}}]
	\begin{scope}[scale=0.4]
	\draw[dashed] (0,0) circle (1.8cm);
	\node at (0,0) [NavyBlue] {$\bullet$};
	\foreach \i in {1,...,3}
	{
		\draw [thick,NavyBlue] (0,0) -- ({360/3*\i}:1.8);
	}

	\node at (2.5,0) {$\mapsto$};
	
	\begin{scope}[xshift=5cm]
	\draw[dashed] (0,0) circle (1.8cm);
	\node at (0,0) [Mahogany] {$\bullet$};
	\foreach \i in {1,...,3}
	{
		\draw [thick,Mahogany] (0,0) -- ({360/3*\i}:1.8);
	}
	\end{scope}
	
	\node at (8,0) {$- \frac{1}{\chi} ($};
	
	\begin{scope}[xshift=10.8cm]
	\foreach \j in {1,...,3}{
		\draw[dashed] ({(4.4)*(\j-1)},0) circle (1.8cm);
		\draw [thick,Mahogany] ++({(4.4)*(\j-1)},0) +({360/3*(\j)}:1.8) -- +({360/3*(\j+1)}:1.8);
		\draw ++({(4.4)*(\j-1)},0) +({360/3*(\j+2)}:-.9) node [Mahogany] {$\bullet$};
	 	\draw [thick,Mahogany] ++({(4.4)*(\j-1)},0) +({360/3*(\j+2)}:1.8) -- +({360/3*(\j+2)}:.9);
	 	\draw ++({(4.4)*(\j-1)},0) +({360/3*(\j+2)}:.9) node [Mahogany] {$\bullet$};
		\draw ++({(4.4)*(\j-1)},0) +({360/3*(\j+2)}:.9) node [Mahogany, left] {$e$};
    	}
	\node at (2.2,0) {$+$};
	\node at (6.6,0) {$+$};
	\node at (11,0) {$)$};
	\end{scope}
	\end{scope}
	\end{tikzpicture}
\end{figure}

\noindent with certain orderings.

The maps $b$ and $f$ are also oriented $W_g$-bundles, so they also induce fibre-integration maps $b_!$ and $f_!$ on cohomology. These are $b^*$- and $f^*$-linear respectively, so are determined by the maps (of degree $-2n$)
$$b_! : \mathcal{V} \lra \mathcal{G}\mathrm{raph}^\theta(-)_g \quad\quad f_! : \mathcal{W} \lra \mathcal{G}\mathrm{raph}(-)_g$$
which each send a monomial $c$ in $p_i$'s and $e$ to the graph given by a single vertex labelled by $c$.

\section{Cohomology of Torelli groups}

The isomorphisms provided by Theorem \ref{thm:TwistedCoh} can be converted into information about the spaces
$$B\mathrm{Tor}(W_g, D^{2n}), B\mathrm{Tor}^\theta(W_g, *), B\mathrm{Tor}^{+}(W_g, *), B\mathrm{Tor}^\theta(W_g), B\mathrm{Tor}^{+}(W_g)$$
just as \cite[Theorem 4.1]{KR-WAlg} is deduced from  \cite[Theorem 3.15]{KR-WAlg}. Let us give the definition of these spaces and formulate the result: the following is largely a reminder of some points from \cite{KR-WAlg}, and we do not spell out all details again.

The group $\mathrm{Diff}^{+}(W_g)$ acts on $H_n(W_g;\bZ)$ preserving the nondegenerate $(-1)^n$-symmetric intersection form $\lambda : H_n(W_g;\bZ) \otimes H_n(W_g;\bZ) \to \bZ$. This provides a homomorphism
$$\alpha_g : \mathrm{Diff}^{+}(W_g) \lra G_g := \begin{cases}
\mathrm{Sp}_{2g}(\bZ) & \text{ if $n$ is odd,}\\
\mathrm{O}_{g,g}(\bZ) &  \text{ if $n$ is even.}
\end{cases}$$
This map is not always surjective, but its image is a certain finite-index subgroup $G'_g \leq G_g$, an arithmetic group associated to the algebraic group $\mathbf{Sp}_{2g}$ or $\mathbf{O}_{g,g}$. This subgroup has been determined by Kreck \cite{kreckisotopy}: it is the whole of $G_g$ if $n$ is even or $n=1,3,7$, and otherwise is the subgroup $\mathrm{Sp}_{2g}^q(\bZ) \leq \mathrm{Sp}_{2g}(\bZ)$ of those matrices which preserve the standard quadratic refinement (of Arf invariant 0).

We define $\mathrm{Tor}^{+}(W_g)$ to be the kernel of this homomorphism, and $\mathrm{Tor}^{+}(W_g, *)$ and $\mathrm{Tor}(W_g, D^{2n})$ to be the kernel of its restriction to the subgroups $\mathrm{Diff}^{+}(W_g, *)$ and $\mathrm{Diff}(W_g, D^{2n})$ respectively (these restrictions still have image $G'_g$). Furthermore, we define
\begin{align*}
B\mathrm{Tor}^\theta(W_g) &:= \mathrm{Bun}^+(TW_g, \theta^*\gamma_{2n}) \hcoker \mathrm{Tor}^+(W_g)\\
B\mathrm{Tor}^\theta(W_g, *) &:= \mathrm{Bun}^+(TW_g, \theta^*\gamma_{2n}) \hcoker \mathrm{Tor}^+(W_g, *),
\end{align*}
where $\mathrm{Bun}^+(TW_g, \theta^*\gamma_{2n}) \subset \mathrm{Bun}(TW_g, \theta^*\gamma_{2n})$ consists of the orientation-preserving bundle maps (for some choice of orientation of $\theta^*\gamma_{2n}$ that we make once and for all). By the discussion at the beginning of Section \ref{sec:TwistedCoh} the spaces $\mathrm{Bun}^+(TW_g, \theta^*\gamma_{2n})$ are path-connected, so each of the $B\mathrm{Tor}$'s we have defined are principal $G'_g$-bundles over the corresponding $B\mathrm{Diff}$'s. In particular, their rational cohomologies are both $\bQ$-algebras and $G'_g$-representations, and we will describe them as such in a stable range. Before doing so, we recall that the work of Borel identifies
$$H^*(G'_g ; \bQ) = \begin{cases}
\bQ[\sigma_2, \sigma_6, \sigma_{10}, \ldots] & \text{ if $n$ is odd,}\\
\bQ[\sigma_4, \sigma_8, \sigma_{12}, \ldots] &  \text{ if $n$ is even.}
\end{cases} $$
in a stable range of degrees, where $\sigma_{4i-2n}$ may be chosen so that it pulls back to the Miller--Morita--Mumford class $\kappa_{\cL_i} \in H^{4i-2n}(B\Diff^+(W_g;\bQ)$ associated to the $i$th Hirzebruch $\mathcal{L}$-class. In particular the $\kappa_{\cL_i}$ vanish in the cohomology of $B\mathrm{Tor}^+(W_g)$.

Let us write $H(g) := H^n(W_g ; \bQ)$, which is the standard representation of $G'_g$. Pulled back from $B\Diff^+(W_g)$ to $B\mathrm{Tor}^+(W_g)$ the coefficient system $\cH$ is canonically trivialised, but has an action of $G'_g$: it can be identified with the dual $H(g)^\vee$. The edge homomorphism of the Serre spectral sequence
\begin{equation}\label{eq:edge}
H^*(B\Diff^+(W_g) ; \mathcal{H}^{\otimes S}) \lra \left[H^*(B\mathrm{Tor}^+(W_g);\bQ) \otimes (H(g)^\vee)^{\otimes S}\right]^{G'_g}
\end{equation}
allows us to consider the modified twisted Miller--Morita--Mumford classes $\bar{\kappa}_{\epsilon^S c}$ as providing $G'_g$-equivariant homomorphisms
$$\bar{\kappa}_c : H(g)^{\otimes S} \lra H^{n(|S|-2) + |c|}(B\mathrm{Tor}^+(W_g);\bQ).$$

The identities from the modified contraction formula correspond to identities among these maps: this will give relations analogous to \cite[Section 5.2]{KR-WTorelli}, which we will spell out after the proof of Theorem \ref{thm:TorelliCalc} below. First we explain how these relations can be organised in a categorical way, as follows.

Considering \eqref{eq:edge} as a natural transformation of functors on $\mathsf{(s)Br}_{2g}$, we may precompose it with the map
$$\bar{\kappa} : \mathcal{G}\mathrm{raph}_g(-) \lra H^*(B\Diff^+(W_g) ; \mathcal{H}^{\otimes -})$$
(which is an isomorphism in a stable range for $n \neq 2$ by Theorem \ref{thm:TwistedCoh}). This gives $G'_g$-equivariant maps $H(g)^{\otimes S} \otimes \mathcal{G}\mathrm{raph}_g(S) \to H^{*}(B\mathrm{Tor}^+(W_g);\bQ)$ which assemble to a map
$$K^\vee \otimes^{\mathsf{(s)Br}_{2g}} \mathcal{G}\mathrm{raph}_g(-) \lra H^{*}(B\mathrm{Tor}^+(W_g);\bQ)$$
out of the coend, where $K : \mathsf{(s)Br}_{2g} \to \mathsf{Rep}(G'_g)$ sends $S$ to $H(g)^{\otimes S}$. The domain obtains a graded-commutative $\bQ$-algebra structure coming from the lax symmetric monoidality of $\mathcal{G}\mathrm{raph}_g(-)$ and strong symmetric monoidality of $K(-)$. Theorem \ref{thm:TorelliCalc} below will say that this is surjective in a stable range, with kernel the ideal generated by the $\kappa_{\cL_i}$, but before stating it we explain a simplification.

Let us write $i : \mathsf{d(s)Br} \to \mathsf{(s)Br}_{2g}$ for the inclusion of the downward (signed) Brauer category. Thus subcategory is independent if $g$, as no circles can be created by composing morphisms in the downward Brauer category. Write $\mathcal{G}\mathrm{raph}_1(-)' \subset i^*\mathcal{G}\mathrm{raph}_1(-)_g$ for the subfunctor where we forbid bivalent vertices labelled by $1 \in \cV$ both of whose half-edges are legs; similarly, this functor is independent of $g$. Like just after \cite[Proposition 3.11]{KR-WTorelli}, $\mathcal{G}\mathrm{raph}_1(-)_g$ is then the left Kan extension $i_* \mathcal{G}\mathrm{raph}_1(-)'$ of $\mathcal{G}\mathrm{raph}_1(-)'$ along $i$.
We similarly define $\mathcal{G}\mathrm{raph}_*^\theta(-)'$, $\mathcal{G}\mathrm{raph}_*(-)'$, $\mathcal{G}\mathrm{raph}^\theta(-)'$, and $\mathcal{G}\mathrm{raph}(-)'$, whose left Kan extensions again recover the original functors. The following is the analogue of \cite[Theorem 4.1]{KR-WTorelli}.

\begin{theorem}\label{thm:TorelliCalc}
There are $G'_g$-equivariant ring homomorphisms
\begin{align*}
\frac{i^*(K^\vee) \otimes^{\mathsf{d(s)Br}} \mathcal{G}\mathrm{raph}_1(-)'}{(\kappa_{\cL_i} \, | \, 4i-2n > 0)}  &\lra H^*(B\mathrm{Tor}(W_g, D^{2n}); \bQ)\tag{i}\\
\frac{i^*(K^\vee) \otimes^{\mathsf{d(s)Br}} \mathcal{G}\mathrm{raph}^\theta_*(-)'}{(\kappa_{\cL_i} \, | \, 4i-2n > 0)}  &\lra H^*(B\mathrm{Tor}^\theta(W_g, *); \bQ)\tag{ii}\\
\frac{i^*(K^\vee) \otimes^{\mathsf{d(s)Br}} \mathcal{G}\mathrm{raph}_*(-)'}{(\kappa_{\cL_i} \, | \, 4i-2n > 0)}  &\lra H^*(B\mathrm{Tor}^+(W_g, *); \bQ)\tag{iii}\\
\frac{i^*(K^\vee) \otimes^{\mathsf{d(s)Br}} \mathcal{G}\mathrm{raph}^\theta(-)'}{({\kappa}_{\cL_i} \, | \, 4i-2n > 0)}  &\lra H^*(B\mathrm{Tor}^\theta(W_g); \bQ)\tag{iv}\\
\frac{i^*(K^\vee) \otimes^{\mathsf{d(s)Br}} \mathcal{G}\mathrm{raph}(-)'}{({\kappa}_{\cL_i} \, | \, 4i-2n > 0)}  &\lra H^*(B\mathrm{Tor}^{+}(W_g); \bQ)\tag{v}
\end{align*}
which for $2n \geq 6$ are isomorphisms in a stable range of degrees.

If $2n=2$ then, in a stable range of degrees and assuming that the target is finite-dimensional in degrees $* < N$ for all large enough $g$, these maps are isomorphisms onto the maximal algebraic subrepresentations in degrees $* \leq N$, and monomorphisms in degrees $* \leq N+1$.
\end{theorem}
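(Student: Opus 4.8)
The plan is to deduce Theorem \ref{thm:TorelliCalc} from Theorem \ref{thm:TwistedCoh} exactly as \cite[Theorem 4.1]{KR-WAlg} is deduced from \cite[Theorem 3.15]{KR-WAlg}, so I shall only indicate the structure and flag the point where the case $2n=2$ forces extra care. First I would record the algebraic input: for $2n\geq 6$ the $G'_g$-representations $H^*(B\mathrm{Diff}^?(W_g);\mathcal{H}^{\otimes S})$ (for each of the five flavours $?$) are, in a stable range, polynomial representations of the ambient algebraic group $\mathbf{Sp}_{2g}$ or $\mathbf{O}_{g,g}$ by \cite{KR-WAlg}, and by Borel's work the Serre spectral sequence
$$E_2^{p,q}=H^p(G'_g;H^q(B\mathrm{Tor}^?(W_g);\bQ)\otimes\mathcal{H}^{\otimes S})\Longrightarrow H^{p+q}(B\mathrm{Diff}^?(W_g);\mathcal{H}^{\otimes S})$$
degenerates in a stable range, so the edge map \eqref{eq:edge} exhibits $H^*(B\mathrm{Diff}^?(W_g);\mathcal{H}^{\otimes S})$ as $H^0(G'_g;-)$ applied to $H^*(B\mathrm{Tor}^?(W_g);\bQ)\otimes(H(g)^\vee)^{\otimes S}$, i.e.\ the invariants. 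Applying the categorical Schur--Weyl duality of \cite[Section 4]{KR-WAlg} (an equivalence between polynomial $G'_g$-representations in a stable range and $\mathsf{(s)Br}_{2g}$-modules, realised by $S\mapsto\mathcal{H}^{\otimes S}$) converts the collection $\{H^*(B\mathrm{Diff}^?(W_g);\mathcal{H}^{\otimes S})\}_S$, viewed as a functor on $\mathsf{(s)Br}_{2g}$, back into $H^*(B\mathrm{Tor}^?(W_g);\bQ)$ together with its $G'_g$-action, via the coend formula $K^\vee\otimes^{\mathsf{(s)Br}_{2g}}(-)$. Feeding in the description of $\{H^*(B\mathrm{Diff}^?(W_g);\mathcal{H}^{\otimes S})\}_S\cong\mathcal{G}\mathrm{raph}^?_g(-)$ from Theorem \ref{thm:TwistedCoh} gives the isomorphism $K^\vee\otimes^{\mathsf{(s)Br}_{2g}}\mathcal{G}\mathrm{raph}^?_g(-)\xrightarrow{\ \sim\ }H^*(B\mathrm{Tor}^?(W_g);\bQ)$ in a stable range, where I must also identify the kernel contributed by $H^{>0}(G'_g;\bQ)=\bQ[\sigma_{4i-2n}]$ with the ideal generated by the $\kappa_{\cL_i}$ with $4i-2n>0$ — this is where one uses that $\sigma_{4i-2n}$ pulls back to $\kappa_{\cL_i}$ and that these are non-zero divisors, as in \cite[Section 5]{KR-WTorelli}. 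The reduction from the coend over $\mathsf{(s)Br}_{2g}$ to the coend over the downward subcategory $\mathsf{d(s)Br}$ with $\mathcal{G}\mathrm{raph}^?(-)'$ is the left-Kan-extension manipulation recalled just before the theorem (parallel to the passage after \cite[Proposition 3.11]{KR-WTorelli}): since $\mathcal{G}\mathrm{raph}^?_g(-)=i_*\mathcal{G}\mathrm{raph}^?(-)'$, the coend unwinds to the stated quotient of $i^*(K^\vee)\otimes^{\mathsf{d(s)Br}}\mathcal{G}\mathrm{raph}^?(-)'$, and the ring structure comes from lax monoidality of the graph functors together with strong monoidality of $K$. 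That the resulting map is a ring homomorphism and $G'_g$-equivariant is formal once the classes $\bar\kappa_c$ are defined via \eqref{eq:edge} and their multiplicativity is matched with the disjoint-union monoidal structure on graphs.

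For the case $2n=2$ the degeneration argument is unavailable — indeed $H^*(B\mathrm{Tor}^+(W_g);\bQ)$ is not known to be finite-dimensional in a stable range — so I would argue as follows. The natural transformation $\bar\kappa:\mathcal{G}\mathrm{raph}^?(-)_g\to H^*(B\mathrm{Diff}^?(W_g);\mathcal{H}^{\otimes -})$ is still an isomorphism in a stable range by Theorem \ref{thm:TwistedCoh} (which is valid for $2n=2$), and composing with the edge map always gives a well-defined ring homomorphism $i^*(K^\vee)\otimes^{\mathsf{d(s)Br}}\mathcal{G}\mathrm{raph}^?(-)'/(\kappa_{\cL_i})\to H^*(B\mathrm{Tor}^?(W_g);\bQ)$ whose image lies in the maximal algebraic subrepresentation (the left side is, in each degree, a polynomial $G'_g$-representation). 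To prove it is an isomorphism onto the maximal algebraic subrepresentation in degrees $*\le N$ under the finite-dimensionality hypothesis, I would invoke the following standard consequence of Borel's vanishing/stability theorem together with the argument of \cite[Section 4]{KR-WAlg}: if $V$ is a $G'_g$-representation that is finite-dimensional in each degree $*<N$ in a stable range, then its maximal algebraic subrepresentation is recovered from $H^0(G'_g;V\otimes\mathcal{H}^{\otimes S})$ for all $S$ via the same Schur--Weyl coend, because $H^0(G'_g;-)$ of an algebraic representation tensored with $\mathcal{H}^{\otimes S}$ computes the appropriate multiplicity spaces (Borel stability for invariants of algebraic coefficient systems), while a non-algebraic subrepresentation would contribute only in strictly higher weight/degree. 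Applying this with $V=H^*(B\mathrm{Tor}^?(W_g);\bQ)$ and using that $H^0(G'_g;V\otimes\mathcal{H}^{\otimes S})=H^*(B\mathrm{Diff}^?(W_g);\mathcal{H}^{\otimes S})$ in the range where the Serre spectral sequence's $E_2$-page is concentrated on the bottom row — which, in degrees $*\le N$, follows from the finite-dimensionality hypothesis since there is nothing in $H^{>0}(G'_g;V^{q})$ to interfere below total degree $N$, except the $\sigma_{4i-2n}$-classes which we have already quotiented out — gives the isomorphism onto the maximal algebraic subrepresentation in degrees $*\le N$. The monomorphism statement in degrees $*\le N+1$ is a mild sharpening: in total degree $N+1$ the only possible obstruction to injectivity is a class detected by a $d_2$ or higher differential landing in $H^{\ge 2}(G'_g;V^{\le N-1})$, and one checks (again via Borel) that in this range such differentials vanish on the image of the $\bar\kappa$-classes, so the map is still injective one degree higher even though it need not be surjective there.

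The main obstacle is the $2n=2$ half of the argument, specifically making precise the claim that ``$H^0(G'_g;-$ tensored with $\mathcal{H}^{\otimes S})$ recovers the maximal algebraic subrepresentation'' without a finiteness hypothesis on the whole of $H^*(B\mathrm{Tor}^+(W_g);\bQ)$. The hypothesis is imposed exactly to control this: one needs that in the Serre spectral sequence $E_2^{p,q}=H^p(G'_g;H^q(B\mathrm{Tor}^+(W_g);\bQ)\otimes\mathcal{H}^{\otimes S})$ the groups $H^q(B\mathrm{Tor}^+(W_g);\bQ)$ for $q<N$ are finite-dimensional so that Borel's vanishing applies termwise and the only surviving contributions to total degree $\le N$ come from the bottom row after inverting/killing the $\sigma$-classes; the finite-dimensionality is precisely what lets one run Borel stability against the (potentially infinite-dimensional but graded-finite) coefficients. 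The remaining work — checking that the two natural transformations \eqref{eq:MapToGraph}-style maps are compatible, that the kernel is exactly $(\kappa_{\cL_i})$ and not larger, and that everything is suitably natural in $\mathsf{d(s)Br}$ — is routine bookkeeping essentially identical to \cite[Section 5]{KR-WTorelli} and \cite[Section 4]{KR-WAlg}, so I would cite those arguments rather than reproduce them. For the known case $N=2$ one simply notes that Johnson's computation \cite{JohnsonIII} of $H^2$ (equivalently, the abelianisation of the genus-$g$ Torelli group) supplies the required finite-dimensionality, recovering the classical statement as a sanity check.
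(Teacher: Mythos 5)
Your overall strategy — deduce the theorem from Theorem \ref{thm:TwistedCoh} by the Schur--Weyl/coend machinery of \cite{KR-WAlg} and \cite[Theorem 4.1]{KR-WTorelli}, then reduce to the downward Brauer category by left Kan extension — is the same as the paper's, and your treatment of the $2n=2$ addendum matches the intended citation. However, there are two gaps, one of which is exactly the point where the paper has to do genuine new work.

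The main gap: the degeneration of the Serre spectral sequence for $B\mathrm{Tor}^{?}(W_g) \to B\mathrm{Diff}^{?}(W_g) \to BG'_g$ is not ``by Borel's work'' alone. The lemma being invoked (\cite[Lemma 4.3]{KR-WTorelli}) has as a hypothesis that the abutment $H^*(B\mathrm{Diff}^{?}(W_g);\mathcal{H}^{\otimes S})$ is a \emph{free} $H^*(G'_g;\bQ)$-module in a stable range; this is an input to the argument, not a consequence of Borel vanishing, and it is the one hypothesis that does not come for free in the new cases (ii)--(v). The paper verifies it as follows: for the unpointed cases (iv) and (v) a transfer argument exhibits $H^*(B\mathrm{Diff}^{+}(W_g);\mathcal{H}^{\otimes S})$ as a summand of $H^*(B\mathrm{Diff}^{+}(W_g,*);\mathcal{H}^{\otimes S})$ as $H^*(G'_g;\bQ)$-modules, so it suffices to treat the pointed cases; and for those, the Serre filtration coming from the fibration over $B\SO(2n)$ (or $B\SO(2n)\langle n\rangle$) is a filtration by $H^*(G'_g;\bQ)$-modules whose associated graded is $H^*(B\SO(2n);\bQ)\otimes H^*(B\mathrm{Diff}(W_g,D^{2n});\mathcal{H}^{\otimes S})$, which is free by the case already treated in \cite[Theorem 4.1]{KR-WTorelli}; freeness of the associated graded implies freeness. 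Your proposal omits this entirely, and without it the identification of $H^*(B\mathrm{Diff}^{?}(W_g);\mathcal{H}^{\otimes S})$ with $H^*(G'_g;\bQ)\otimes[\,\cdots]^{G'_g}$, and hence the identification of the kernel with the ideal $(\kappa_{\cL_i})$, is unjustified.

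The secondary gap: your ``algebraic input'' is misstated — $H^*(B\mathrm{Diff}^{?}(W_g);\mathcal{H}^{\otimes S})$ is not a $G'_g$-representation; what must be algebraic is $H^*(B\mathrm{Tor}^{?}(W_g);\bQ)$. The reference \cite{KR-WAlg} gives this only for $B\mathrm{Tor}(W_g,D^{2n})$; for the other four flavours one must propagate algebraicity through the Serre spectral sequences of the fibrations $B\Tor(W_g,D^{2n})\to B\Tor^{?}(W_g,*)\to B\SO(2n)$ (resp.\ $B\SO(2n)\langle n\rangle$) and $W_g \to B\Tor^{?}(W_g,*)\to B\Tor^{?}(W_g)$ using the inheritance properties of algebraic representations. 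This step is short but cannot simply be cited away.
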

\begin{proof}
By the main theorem of \cite{KR-WAlg}, as long as $2n \geq 6$ the $G'_g$-representations $H^i(B\mathrm{Tor}(W_g, D^{2n}); \bQ)$ are algebraic. Using the inheritance properties for algebraic representations from \cite[Theorem 2.2]{KR-WAlg}, the Serre spectral sequences for the homotopy fibre sequences
\begin{align*}
B\Tor(W_g, D^{2n}) \lra &B\Tor^{+}(W_g, *) \lra B\SO(2n)\\
B\Tor(W_g, D^{2n}) \lra &B\Tor^{\theta}(W_g, *) \lra B\SO(2n)\langle n \rangle
\end{align*}
show that the cohomology groups of $B\Tor^{+}(W_g, *)$ and $B\Tor^{\theta}(W_g, *)$ are also algebraic $G_g'$-representations, and the same for the homotopy fibre sequences
\begin{align*}
W_g \lra &B\Tor^{+}(W_g, *) \lra B\Tor^{+}(W_g)\\
W_g \lra &B\Tor^{\theta}(W_g, *) \lra B\Tor^{\theta}(W_g)
\end{align*}
show that the cohomology groups of $B\Tor^{+}(W_g)$ and $B\Tor^{\theta}(W_g)$ are algebraic $G_g'$-representations too.

Using this algebraicity property, case (i) is precisely \cite[Theorem 4.1]{KR-WTorelli}, using that by \cite[Proof of Theorem 5.1]{KR-WTorelli} $\mathcal{G}\mathrm{raph}_1(-)_g$ is isomorphic to the functor $\mathcal{P}(-, \cV)_{\geq 0} \otimes \det^{\otimes n}$. The other cases follow in the same way, using \cite[Proposition 2.16]{KR-WTorelli}, from Theorem \ref{thm:TwistedCoh}, with one elaboration which we describe below. The addendum in the case $2n=2$ is precisely as in \cite[Theorem 4.1]{KR-WTorelli}.

The elaboration comes when verifying the first hypothesis of \cite[Lemma 4.3]{KR-WTorelli}, which in case (v) for example requires us to know that $H^*(B\Diff^+(W_g) ; \mathcal{H}^{\otimes S})$ is a free $H^*(G'_g;\bQ)$-module in a stable range. But by transfer $H^*(B\Diff^+(W_g) ; \mathcal{H}^{\otimes S})$ is a summand of $H^*(B\Diff^+(W_g, *) ; \mathcal{H}^{\otimes S})$ (as $H^*(G'_g;\bQ)$-modules), and similarly with $\theta$-structures, so cases (ii) and (iii) imply cases (iv) and (v). In the other hand in case (iii) for example we have discussed in the proof of Theorem \ref{thm:TwistedCoh} the degeneration of the Serre spectral sequence in a stable range, giving
$$\mathrm{gr}(H^*(B\Diff^+(W_g, *) ; \mathcal{H}^{\otimes S})) \cong H^*(B\SO(2n);\bQ) \otimes H^*(B\Diff(W_g, D^{2n}) ; \mathcal{H}^{\otimes S}).$$
The Serre filtration is one of $H^*(G'_g;\bQ)$-modules, so as the associated graded is a free $H^*(G'_g;\bQ)$-module in a stable range (because $H^*(B\Diff(W_g, D^{2n}) ; \mathcal{H}^{\otimes S})$ is the case treated in \cite[Theorem 4.1]{KR-WTorelli}), it follows that $H^*(B\Diff^+(W_g, *) ; \mathcal{H}^{\otimes S})$ is too. The same argument applies in case (ii).
\end{proof}

This quite categorical description can be used to get a more down-to-earth presentation for these cohomology rings: in case (v) this is the presentation we have recorded in Theorem \ref{thm:A}. This is deduced just as in \cite[Section 5]{KR-WTorelli}, though a certain amount of the work can be avoided as we have already expressed things in terms of graphs rather than partitions. We outline the argument.

Let $R_\mathrm{pres}$ denote the $\bQ$-algebra whose presentation is given in Theorem \ref{thm:A} but \emph{omittting the relations (v) $\bar{\kappa}_{\cL_i}=0$}. By the universal property of coends, for any finite set $S$ there is a map $\bar{\kappa} : H(g)^{\otimes S} \otimes \mathcal{G}\mathrm{raph}(S)' \to i^*(K^\vee) \otimes^{\mathsf{d(s)Br}} \mathcal{G}\mathrm{raph}(-)'$, and so in particular there are maps
$$\bar{\kappa}_c : H(g)^{\otimes r} \lra i^*(K^\vee) \otimes^{\mathsf{d(s)Br}} \mathcal{G}\mathrm{raph}(-)'$$
corresponding to the graph given by the corolla with legs $\{1,2,\ldots,r\}$ and label $c \in \cW$. The identifications made in a coend mean that if $\omega = \sum_{i=1}^{2g} a_i \otimes a_i^\# \in H(g)^{\otimes 2}$ is the form dual to the intersection pairing then $\sum_i \bar{\kappa}_c(v_1 \otimes \cdots \otimes v_{r-2} \otimes a_i \otimes a_i^\#)$ is identified with
$$\bar{\kappa}(v_1 \otimes \cdots \otimes v_{r-2} \otimes \{\substack{\text{the graph obtained from the corolla with legs $\{1,2,\ldots,r\}$ and label} \\ \text{$c \in \cW$ by gluing an edge between its last two legs}}\}).$$
Using the modified contraction formula (relation (d$^{\prime\prime\prime}$)), and permuting the ordering of some half-edges, we have the identity 

\begin{figure}[h]
	\begin{tikzpicture}[arrowmark/.style 2 args={decoration={markings,mark=at position #1 with \arrow{#2}}}]
\begin{scope}[scale=0.5]

	\draw[dashed] (0,0) circle (1.8cm);
	\draw [thick,NavyBlue]  (0,.3) circle (0.8cm);
	\node at (0,-0.5) [NavyBlue] {$\bullet$};
	\draw [thick,NavyBlue] (0,-1.8) -- (0,-0.5);
	\draw [thick,NavyBlue] (-1,-1.5) -- (0,-0.5);
	\draw [thick,NavyBlue] (1,-1.5) -- (0,-0.5);
	\node at (0,-.5) [above] {$c$};
	
	\draw [NavyBlue] (0.3,-1.2) -- (0.7,-0.8);
	\node at (-1.5,-1.5) [below] {$r-2$};
	\node at (0,-1.8) [below] {$\cdots$};
	\node at (1,-1.5) [below] {$1$};

	\node at (2.5,0) {=};
	
	\begin{scope}[xshift=7cm]
	\node at (-3,0) {$\tfrac{\chi-2}{\chi}$};
	\draw[dashed] (0,0) circle (1.8cm);
	\node at (0,-0.5) [NavyBlue] {$\bullet$};
	\draw [thick,NavyBlue] (0,-1.8) -- (0,-0.5);
	\draw [thick,NavyBlue] (-1,-1.5) -- (0,-0.5);
		\draw [thick,NavyBlue] (1,-1.5) -- (0,-0.5);
	\node at (0,-.5) [above] {$ce$};
	
	\draw [NavyBlue] (0.3,-1.2) -- (0.7,-0.8);
	\node at (-1.5,-1.5) [below] {$r-2$};
	\node at (0,-1.8) [below] {$\cdots$};
	\node at (1,-1.5) [below] {$1$};
	
	\end{scope}
	
	\begin{scope}[xshift=13cm]
	\node at (-3,0) {$+\tfrac{1}{\chi^2}$};
	\draw[dashed] (0,0) circle (1.8cm);
	\node at (0,-0.5) [NavyBlue] {$\bullet$};
	\draw [thick,NavyBlue] (0,-1.8) -- (0,-0.5);
	\draw [thick,NavyBlue] (-1,-1.5) -- (0,-0.5);
		\draw [thick,NavyBlue] (1,-1.5) -- (0,-0.5);
	\node at (0,-.5) [above] {$c$};
	\node at (1,0) [NavyBlue] {$\bullet$};
	\node at (1,0) [above] {$e^2$};
	
	\draw [NavyBlue] (0.3,-1.2) -- (0.7,-0.8);
	\node at (-1.5,-1.5) [below] {$r-2$};
	\node at (0,-1.8) [below] {$\cdots$};
	\node at (1,-1.5) [below] {$1$};
	
	\end{scope}

	\end{scope}
	\end{tikzpicture}
\end{figure}
\noindent in $\mathcal{G}\mathrm{raph}(\{1,2,\ldots,r-2\})'$, showing that $\sum_i \bar{\kappa}_c(v_1 \otimes \cdots \otimes v_{r-2} \otimes a_i \otimes a_i^\#) = \tfrac{\chi-2}{\chi} \bar{\kappa}_{ce}(v_1 \otimes \cdots \otimes v_{r-2}) + \tfrac{1}{\chi^2} \bar{\kappa}_{e^2} \bar{\kappa}_c(v_1 \otimes \cdots \otimes v_{r-2})$, i.e.\ that relation (iv) in the presentation of $R_\mathrm{pres}$ holds in the algebra $i^*(K^\vee) \otimes^{\mathsf{d(s)Br}} \mathcal{G}\mathrm{raph}(-)'$. Similarly one shows that relations (i)--(iii) in the presentation of $R_\mathrm{pres}$ hold, so that there is defined an algebra map
$$\phi : R_\mathrm{pres} \lra i^*(K^\vee) \otimes^{\mathsf{d(s)Br}} \mathcal{G}\mathrm{raph}(-)'.$$
This is in addition a map of algebraic $G'_g$-representations, so to show it is an isomorphism in a stable range it suffices to show that it induces an isomorphism on applying $[- \otimes H(g)^{\otimes S}]^{G'_g}$ in a stable range, for all finite sets $S$. This is done, as in the proof of \cite[Theorem 5.1]{KR-WTorelli}, by constructing a factorisation
$$\mathcal{G}\mathrm{raph}(S) \overset{\alpha}\lra [R_\mathrm{pres} \otimes H(g)^{\otimes S}]^{G'_g} \overset{\phi_*}\lra [i^*(K^\vee) \otimes^{\mathsf{d(s)Br}} \mathcal{G}\mathrm{raph}(-)' \otimes H(g)^{\otimes S}]^{G'_g}$$ 
of the canonical map, which is an isomorphism in a stable range. The map $\alpha$ is surjective by construction, so is an isomorphism in a stable range, and hence $\phi_*$ is too. Thus $\phi$ is an isomorphism in a stable range: quotienting out both sides by the ideal genearted by the $\bar{\kappa}_{\cL_i}$ gives Theorem \ref{thm:A}. This finishes the outline of the argument.

\section{The case $2n=2$}\label{sec:surfaces} 

Although Theorem \ref{thm:TorelliCalc} is only known to hold in a limited range of degrees in the case $2n=2$ ($N=2$ is currently the best known constant for $g \geq 3$, using the work of Johnson \cite{JohnsonIII}), Theorem \ref{thm:TwistedCoh} does hold in a range of cohomological degrees tending to infinity with $g$. In this case our discussion is closely related to the work of Kawazumi and Morita \cite{MoritaLinearRep, KM, KMunpub}, and in this section we take the opportunity to revisit that work from our perspective. \emph{Throughout this section we assume that $g \geq 2$}, so that $\chi(W_g) = 2-2g \neq 0$.

In terms of Kawazumi and Morita's notation we have
$$\mathcal{M}_g := \pi_0(\mathrm{Diff}^+(W_g)) \quad \mathcal{M}_{g,*} := \pi_0(\mathrm{Diff}^+(W_g, *)) \quad \mathcal{M}_{g,1} := \pi_0(\mathrm{Diff}^+(W_g, D^2)).$$
Under our assumption $g \geq 2$ the groups $\mathrm{Diff}^+(W_g)$, $\mathrm{Diff}^+(W_g, *)$, and $\mathrm{Diff}^+(W_g, D^2)$ all have contractible path-components, so the group cohomology of $\mathcal{M}_g$ is the cohomology of $B\mathrm{Diff}^+(W_g)$, and so on. Theorem \ref{thm:TwistedCoh} gives a natural transformation
$$\bar{\kappa} : \mathcal{G}\mathrm{raph}(-)_g \lra H^*(\mathcal{M}_g ; \cH^{\otimes -})$$
of functors $\mathsf{sBr}_{2g} \to \mathsf{Gr}(\bQ\text{-}\mathsf{mod})$, which is an isomorphism in a stable range of degrees. Note that in this case $H^*(B\SO(2);\bQ)=\bQ[e]$ so $\mathcal{V} = \mathcal{W} = \bQ[e]$ and there is no difference between the tangential structure $\theta$ and an orientation. In particular if we denote by $\Gamma_i \in \mathcal{G}\mathrm{raph}(\emptyset)$ the graph with a single vertex, no edges, and labelled by $e^{i+1}$, then $\bar{\kappa}(\Gamma_i) = \kappa_i \in H^{2i}(\mathcal{M}_g;\bQ)$ is the usual Miller--Morita--Mumford class\footnote{Our $\kappa_i$ is denoted $e_i$ in the work of Kawazumi and Morita.}.

Our goal in this Section is to relate our $\mathcal{G}\mathrm{raph}(-)$ to the work of Kawazumi and Morita mentioned above as well as work of Akazawa \cite{Akazawa} and Garoufalidis and Nakamura \cite{GN, GNCorr}. A brief overview is as follows. The modified contraction formula can be used to simplify elements of $\mathcal{G}\mathrm{raph}(S)$ in two different way. Firstly, in Section \ref{sec:RedCor} the modified contraction formula is used to express a graph as a linear combination of graphs with fewer edges, and hence by induction to express it as a linear combination of graphs with no edges. In particular this shows that $\mathcal{G}\mathrm{raph}(\emptyset)_g \cong \bQ[\Gamma_1, \Gamma_2, \ldots]$ for the graphs $\Gamma_i$ described above. Secondly, in Section \ref{sec:RedTriv} the modified contraction formula is used ``in reverse'' to express a graph as a linear combination of graphs with smaller powers of $e$ in a label, or with smaller valence, and hence by induction to express it as a linear combination of trivalent graphs with all labels 1. Reduced to this setting of trivalent graphs, in Section \ref{sec:orderings} it is explained how the decorations by ordering and orientation data can be neglected, due to the existence of a normal form for it. This allows for all elements of $\mathcal{G}\mathrm{raph}(S)$ to be expressed in terms of undecorated trivalent graphs, and  Section \ref{sec:TrivRels} describes the complete set of relations among undecorated trivalent graphs which need to be imposed in order to present $\mathcal{G}\mathrm{raph}(S)$: we rediscover a modified form of the ``$I = H$'' relation among trivalent graphs, which has arisen in the work of Akazawa \cite{Akazawa} and Garoufalidis and Nakamura \cite{GNCorr}. Finally, in Section \ref{sec:GN} we explain how this discussion completes a calculation in symplectic invariant theory which has been outstanding since \cite{GNCorr}.

\subsection{Reduction to corollas}\label{sec:RedCor}

  The possible labels for the vertices of graphs in $\mathcal{G}\mathrm{raph}(S)$ are powers of the Euler class $e$. Given any graph we may iteratedly apply the modified contraction formula to write it as a linear combination of graphs with fewer edges, and hence any graph is equivalent to a linear combination of graphs with no edges: these are disjoint unions of corollas. Of these, by definition of $\mathcal{G}\mathrm{raph}$: the 0-valent corolla labelled by $e$ is equal to the scalar $\chi$, the 1-valent corolla labelled by $1 \in \cV$ is trivial, and the 1-valent corolla labelled by $e \in \cV$ is trivial. Define a \emph{labelled partition} of a finite set $S$ to be a partition $\{S_\alpha\}_{\alpha \in I}$ of $S$ into (possibly empty) subsets and a label $e^{n_\alpha}$ for each part, such that
\begin{enumerate}[(i)]
\item If $|S_\alpha|=0$ then $n_\alpha \geq 2$,

\item If  $|S_\alpha|=1$ then $n_\alpha \geq 1$.
\end{enumerate}
We give a part $(S_\alpha, n_\alpha)$ degree $2n_\alpha + |S_\alpha|-2$, and a labelled partition the degree given by the sums of the degrees of its parts. Similarly to the proof of Theorem \ref{thm:TwistedCoh} (iv) (particularly around equation \eqref{eq:MapToGraph}), let $\mathcal{P}^\text{bis}(S, \cV)_{\geq 0}$ denote the free $\bQ[\chi^{\pm 1}]$-module with basis the set of labelled partitions of $S$. Assigning to a labelled part $(S_\alpha, e^{n_\alpha})$ the corolla with legs $S_\alpha$ and label $e^{n_\alpha}$ defines a map
\begin{equation}\label{eq:PbisAgain}
\mathcal{P}^\text{bis}(S,\cV)_{\geq 0} \otimes \det \bQ^S \lra \mathcal{G}\mathrm{raph}(S),
\end{equation}
natural in $S$ with respect to bijections. 

\begin{lemma}\label{lem:JustGammas}
The map \eqref{eq:PbisAgain} is an isomorphism.
\end{lemma}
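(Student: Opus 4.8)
The plan is to prove Lemma~\ref{lem:JustGammas} by exhibiting an inverse to the map \eqref{eq:PbisAgain}, or at least by a dimension count in each degree together with surjectivity, which we already essentially know. Let me lay out both ingredients.

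\textbf{Surjectivity.} This is the easy half, and it was already remarked in the text just before the lemma: given any graph in $\mathcal{G}\mathrm{raph}(S)$ we iteratedly apply the modified contraction formula (relation (d$^{\prime\prime\prime}$)) to reduce the number of edges by one at the cost of passing to a linear combination of graphs; by induction on the number of edges this writes any graph as a linear combination of graphs with no edges, i.e.\ disjoint unions of corollas. Invoking relation (c$^{\prime\prime\prime}$) — the $0$-valent $e$-corolla is $\chi$, and the $1$-valent $1$- and $e$-corollas vanish — shows every such disjoint union of corollas is (a $\bQ[\chi^{\pm1}]$-multiple of) the image under \eqref{eq:PbisAgain} of a labelled partition in the sense of (i)--(ii). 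Hence \eqref{eq:PbisAgain} is surjective.

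\textbf{Injectivity.} The cleanest route is to construct a well-defined $\bQ[\chi^{\pm1}]$-linear retraction $\rho : \mathcal{G}\mathrm{raph}(S) \to \mathcal{P}^\text{bis}(S,\cV)_{\geq 0} \otimes \det\bQ^S$ with $\rho \circ \eqref{eq:PbisAgain} = \mathrm{id}$. One defines $\rho$ on a generating marked oriented graph $\Gamma$ by the "reduce to corollas" recipe above: repeatedly apply the modified contraction formula to eliminate an edge, until no edges remain. The content is that the resulting element of $\mathcal{P}^\text{bis}(S,\cV)_{\geq 0} \otimes \det\bQ^S$ is \emph{independent of the order} in which edges are eliminated, and respects relations (a), (b), (c$^{\prime\prime\prime}$), (d$^{\prime\prime\prime}$). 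This is exactly the issue flagged in Remark~\ref{rem:GraphsAndPartitions}: a priori the reduction could depend on choices. Since $2n=2$ here, $\cV = \cW = \bQ[e]$, which simplifies matters considerably compared with the general case. One shows order-independence by a standard confluence/diamond-lemma argument: it suffices to check that eliminating two edges $f_1, f_2$ in either order gives the same answer, and this reduces to a small finite list of local configurations (the two edges disjoint; sharing a vertex; forming a multiple edge or a loop), each of which is a direct computation using the two identities of Proposition~\ref{prop:MCF} — the "self-contraction" identity $\lambda_{1,2}(\pi_!(\bar\epsilon^{1,2,\ldots,a}\ell^*c))$ and the "pair-contraction" identity. (This is the analogue, in the modified/blue setting, of the corresponding verification for red graphs and partitions in \cite{KR-WTorelli} around Proposition~3.11, and Example~\ref{ex:theta} is a sample instance where the two reduction routes are seen to agree.) Compatibility with the sign relation (a) and linearity (b) is immediate from how $\bar\kappa(\Gamma)$ transforms, and compatibility with (c$^{\prime\prime\prime}$) is built into the definition of $\rho$. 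Finally $\rho \circ \eqref{eq:PbisAgain} = \mathrm{id}$ because a disjoint union of corollas has no edges to eliminate, so the reduction does nothing beyond applying (c$^{\prime\prime\prime}$), which merely normalises the already-normalised labelled partition.

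\textbf{Main obstacle.} The crux is precisely the confluence statement — that the edge-elimination procedure is well-defined. I expect the overlapping-edges case (two edges sharing a common vertex, or a pair of parallel edges, or an edge that becomes a loop after another is contracted) to be the delicate bookkeeping, especially tracking the orderings of half-edges and the orientation signs through relation (a) as the local identities of Proposition~\ref{prop:MCF} are applied; the coefficients $\tfrac{\chi-2}{\chi}$, $\tfrac{1}{\chi^2}$, $-\tfrac{1}{\chi}$ must be shown to reassemble identically on both sides, as in the $\kappa_{e^2}$ bookkeeping of Example~\ref{ex:theta}. An alternative that avoids confluence entirely: compare graded dimensions. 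The source $\mathcal{P}^\text{bis}(S,\cV)_{\geq 0}\otimes\det\bQ^S$ has an explicit basis (labelled partitions), so its Poincaré series is computable; one can separately compute the graded dimension of $\mathcal{G}\mathrm{raph}(S)$ by choosing, for each oriented graph, a canonical spanning set modulo the relations and checking no further collapse occurs. Given surjectivity, matching dimensions in each degree forces bijectivity. Either way the lemma follows.
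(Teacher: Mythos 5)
There is a genuine gap. Your surjectivity half matches the paper, but for injectivity you have identified the crux --- confluence of the edge-elimination procedure --- without proving it, and this is exactly the point that Remark~\ref{rem:GraphsAndPartitions} flags as problematic: in cases (iv) and (v) ``it is not clear that the resulting sum of disjoint unions of labelled corollas is unique, as one has to choose an order in which to eliminate edges.'' A diamond-lemma verification for the modified contraction formula is a nontrivial computation (the local overlap cases involve the coefficients $\tfrac{\chi-2}{\chi}$, $\tfrac{1}{\chi^2}$, $-\tfrac{1}{\chi}$ interacting in ways that do not obviously cancel, as Example~\ref{ex:theta} already suggests), and you have not carried it out; asserting that it ``reduces to a small finite list of local configurations, each of which is a direct computation'' leaves the entire content of the lemma unverified. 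Your fallback --- a graded dimension count for $\mathcal{G}\mathrm{raph}(S)$ obtained by ``checking no further collapse occurs'' --- begs the same question: bounding the dimension of the quotient from below is precisely what injectivity asserts.

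The paper's proof avoids the rewriting system entirely. If \eqref{eq:PbisAgain} had a nontrivial kernel element in some degree $d$, then since a nonzero Laurent polynomial in $\chi$ has only finitely many roots, this element would remain nonzero after specialising to $\chi = 2-2g$ for all $g \gg 0$. But the proof of Theorem~\ref{thm:TwistedCoh}~(iv) shows (in the discussion around \eqref{eq:MapFromPart} and \eqref{eq:MapToGraph}) that at $\chi = 2-2g$ the composite $\mathcal{P}^\text{bis}(-,\cV)_{\geq 0} \otimes \det \to \mathcal{G}\mathrm{raph}^\theta(-)_g \to H^*(B\Diff^\theta(W_g);\cH^{\otimes -})$ is an isomorphism in a stable range, hence so is the first map; taking $g$ large enough that degree $d$ lies in the stable range gives a contradiction. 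So the topological computation is doing the work that your confluence argument would have to do combinatorially. If you want to salvage your approach you must either actually verify the local confluence cases (with all ordering and orientation signs) or import the stable-range isomorphism as the paper does.
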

\begin{proof}
It is surjective, as explained above, by repeatedly applying the modified contraction formula to express a graph in terms of graphs without edges.

If it were not injective then it would have some nontrivial $\bQ[\chi^{\pm 1}]$-linear combination of labelled partitions in its kernel, of a given degree $d$, and this would remain a nontrivial $\bQ$-linear combination of labelled partitions when specialised to $\chi = 2-2g$ for all $g \gg 0$ (as a Laurent polynomial in $\chi$ has finitely-many roots). But in the proof of Theorem \ref{thm:TwistedCoh} (iv), in the discussion after equation \eqref{eq:MapToGraph}, it is explained that when specialised to $\chi = 2-2g$ this map is an isomorphism in a range of degrees tending to infinity with $g$; for large enough $g$ the degree $d$ will be in this stable range, a contradiction.
\end{proof}

In particular, using the graphs $\Gamma_i$ described above there is an isomorphism
\begin{equation}\label{eq:CalcGraphsEmpty}
\bQ[\chi^{\pm 1}][\Gamma_1, \Gamma_2, \ldots] \cong \mathcal{G}\mathrm{raph}(\emptyset).
\end{equation}

\subsection{Reduction to trivalent graphs without labels.}\label{sec:RedTriv}

In this section we will prove the following.

\begin{theorem}\label{thm:AllAreTriv}
Using the modified contraction formula any marked oriented graph is equivalent to a $\bQ[\chi^{\pm1}, (\chi-2)^{-1}, (\chi-3)^{-1}, (\chi-4)^{-1}]$-linear combination of trivalent graphs with all vertices labelled by $1 \in \mathcal{V}_0$.
\end{theorem}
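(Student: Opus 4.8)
The plan is to prove Theorem \ref{thm:AllAreTriv} by a double induction, following the two-step reduction advertised in the outline: first reducing the power of the Euler class labels using the modified contraction formula ``in reverse'', then reducing the valence of vertices, again using the modified contraction formula in reverse. Throughout we work over $R := \bQ[\chi^{\pm1}, (\chi-2)^{-1}, (\chi-3)^{-1}, (\chi-4)^{-1}]$, so that all the relevant scalars are invertible.

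First I would set up the key algebraic move. The second identity in Proposition \ref{prop:MCF} (the contraction of two $\bar\kappa$-factors), when specialised to the situation of a \emph{single} graph with an edge being created, reads schematically: contracting two legs of a corolla produces the corolla with one fewer pair of legs and label multiplied by $e$, \emph{plus} lower-complexity correction terms (terms with strictly more vertices/components but smaller labels, and terms involving $\kappa_{e^2}$). Read in reverse, this lets one \emph{trade} a vertex with label $e^k$ ($k\geq 1$) and valence $a$ for: a vertex with label $e^{k-1}$ and valence $a+2$ (with two new legs joined by an edge), up to correction terms of strictly smaller total Euler-power. More precisely, if one has a vertex $v$ with label $e^k$, $k \geq 1$, pick a graph $\Gamma'$ with a bivalent $1$-labelled vertex $w$ whose two half-edges are contracted together (a ``bubble''), attach $w$ to $v$ so that contracting its loop recreates $\Gamma$ with the $e$-multiplication; solving the contraction identity for the ``$\kappa_{ce}$'' term and noting the coefficient $\tfrac{\chi-2}{\chi}$ (respectively $\tfrac{\chi-2}{\chi}, \tfrac{1}{\chi^2}$ in the two variants) is invertible in $R$, one expresses $\Gamma$ as an $R$-combination of graphs each of which either has strictly smaller total $e$-degree $\sum_v (\text{Euler-power at }v)$, or is obtained from $\Gamma$ by replacing $e^k \mapsto e^{k-1}$ and increasing the valence of $v$ by $2$. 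Iterating on the well-founded quantity ``total Euler power'', every graph becomes an $R$-combination of graphs all of whose vertex labels are $1 \in \mathcal{V}_0$; one must be careful that the relations (c$'''$) in Definition \ref{defn:GraphSpaces}(iv) — the $0$-valent $e$-vertex $=\chi$, the $1$-valent $e$-vertex $=0$ — are used to dispose of the degenerate cases, which is exactly where the parameter $\chi$ enters.

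Next I would reduce the valence. Given a graph with all labels $1$ but with a vertex $v$ of valence $a \geq 4$, use the first identity of Proposition \ref{prop:MCF} (equivalently Figure \ref{fig:ModContFormulaGraph}, second relation) \emph{in reverse}: the contraction formula says that contracting two half-edges at a single vertex labelled $1$ yields $\tfrac{\chi-2}{\chi}(\text{vertex labelled }e) + \tfrac{1}{\chi^2}\kappa_{e^2}(\text{vertex labelled }1)$; while splitting a vertex into two vertices joined by an edge and then contracting recovers the $c\cdot c'$ term — so, reading the ``product of two $\bar\kappa$'s'' identity backwards, an $a$-valent $1$-labelled vertex equals (an invertible scalar times) a graph with the same legs but the vertex split into two vertices of smaller valence joined by a new edge, plus corrections with strictly smaller total valence at the cut, together with $\kappa_{e^2}$-decorated and $e$-labelled terms which are then fed back into the first reduction step. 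Running this down to all vertices being trivalent (bivalent and univalent vertices with label $1$ and legs are handled by the contraction relation (d$'''$) and relation (c$'''$), collapsing bivalent $1$-vertices along edges and discarding univalent ones) completes the argument. I would organise the bookkeeping via a lexicographic complexity $\big(\sum_v (\text{Euler power}), \sum_v \max(|a^{-1}(v)| - 3, 0)\big)$, decreasing at each step.

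The main obstacle I anticipate is \emph{termination and circularity control}: the valence-reduction step introduces new $e$-labels (via the $\tfrac{\chi-2}{\chi}$-term) and new $\kappa_{e^2}$-factors (i.e.\ extra $0$-valent $e^2$-vertices, which by (c$'''$) are harmless scalars but which arise from the contraction formula looking like genuine vertices before one applies the defining relations), so one must verify that the lexicographic complexity genuinely decreases and that the ``corollas with no edges'' base case — already pinned down by Lemma \ref{lem:JustGammas} / equation \eqref{eq:CalcGraphsEmpty} — does not leak back higher-valence labelled vertices. One must also check the degenerate low-valence cases of the contraction formulas carefully: the relations in Figures \ref{fig:ContFormulaGraph}/\ref{fig:ModContFormulaGraph} have ``negative terms [that] only arise when they make sense'', and for $a \in \{1,2\}$ the reverse move must be matched with relation (c$'''$) and (d$'''$) rather than the generic formula. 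This is fiddly but routine once the complexity function is fixed; I expect no conceptual difficulty beyond arranging the induction so that each application of the modified contraction formula strictly decreases the chosen well-ordered invariant.
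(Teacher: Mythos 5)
Your overall two-step strategy --- first trade Euler-class labels for extra valence, then split high-valence vertices --- is the same as the paper's, and your Euler-power reduction step is exactly the paper's rearrangement of the first identity of Proposition \ref{prop:MCF}, which costs only $(\chi-2)^{-1}$. The genuine gap is in the valence-reduction step. Rearranging the second identity of Proposition \ref{prop:MCF} to split an $(a+b)$-valent vertex produces, besides the desired term $\lambda_{a+1,a+2}(\kappa_{\bar{\epsilon}^{a+1}}\cdot\kappa_{\bar{\epsilon}^{1+b}})$, correction terms containing $\lambda_{1,2}(\kappa_{\bar{\epsilon}^{a+2}})$ and $\lambda_{1,2}(\kappa_{\bar{\epsilon}^{b+2}})$, i.e.\ vertices of valence $a+2$ and $b+2$ carrying a self-loop. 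Your lexicographic complexity strictly decreases on these terms only when $a,b\geq 3$, i.e.\ only for vertices of valence at least $6$. For valence $4$ (forced to take $(a,b)=(2,2)$) the correction contains $\lambda_{1,2}(\kappa_{\bar{\epsilon}^{4}})$, of exactly the same complexity as the vertex you are trying to split, and similarly $\lambda_{1,2}(\kappa_{\bar{\epsilon}^{5}})$ appears when splitting a $5$-valent vertex: your induction does not terminate there, and these are not the ``degenerate $a\in\{1,2\}$ cases'' you flag as routine. The paper resolves precisely these two cases by a different maneuver: contract a trivalent corolla onto the offending vertex, expand $\lambda_{2,5}\lambda_{3,4}(\kappa_{\bar{\epsilon}^{1,2,3}}\cdot\kappa_{\bar{\epsilon}^{4,5,6}})$ (resp.\ with $\kappa_{\bar{\epsilon}^{4,5,6,7}}$) in two ways, and solve the resulting linear equation, whose leading coefficient is $\tfrac{\chi-4}{\chi-2}$ (resp.\ $\tfrac{\chi-3}{\chi-2}$). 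This is exactly where the hypothesis that $\chi-4$ be invertible enters; the fact that your argument never produces a denominator of $\chi-4$ is a reliable sign that the induction as described cannot close.

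A second, smaller error: you assert that the $0$-valent $e^2$-vertices produced by the contraction formula are ``harmless scalars'' by relation (c$^{\prime\prime\prime}$). They are not. That relation only evaluates the $0$-valent vertex labelled by $e$ (of degree $2n$) to the scalar $\chi$; the $0$-valent vertex labelled by $e^2$ is $\kappa_{e^2}$, a nontrivial class of cohomological degree $2n$, and it must itself be rewritten as a trivalent graph. This is done via the theta-graph computation of Example \ref{ex:theta}, which evaluates the theta graph to $(-1)^n\tfrac{\chi-3}{\chi}\kappa_{e^2}$, and solving for $\kappa_{e^2}$ is where the factor $(\chi-3)^{-1}$ in the statement is needed --- another denominator your proposal does not account for.
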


Let $\mathcal{G}\mathrm{raph}^\text{tri}(S) \leq \mathcal{G}\mathrm{raph}(S)$ denote the sub-$\bQ[\chi^{\pm1}]$-module spanned by those marked oriented graphs which are trivalent and all of whose labels are $1\in \mathcal{V}$.

\begin{corollary}\label{cor:TriIsAll}
The monomorphism $i : \mathcal{G}\mathrm{raph}^\text{tri}(-) \to \mathcal{G}\mathrm{raph}(-)$ becomes an isomorphism upon inverting $\chi-2$, $\chi-3$, and $\chi-4$. In particular $\mathcal{G}\mathrm{raph}^\text{tri}(-)_g = \mathcal{G}\mathrm{raph}(-)_g$.
\end{corollary}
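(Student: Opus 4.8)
The statement to prove is Corollary~\ref{cor:TriIsAll}, which asserts that the inclusion $i : \mathcal{G}\mathrm{raph}^\text{tri}(-) \to \mathcal{G}\mathrm{raph}(-)$ becomes an isomorphism after inverting $\chi-2$, $\chi-3$, $\chi-4$, and that consequently the specializations at $\chi = 2-2g$ agree (for $g\geq 2$, so that $\chi = 2-2g \in \{-2,-4,-6,\ldots\}$ avoids $2,3,4$).

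\medskip

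\emph{The plan.} The corollary is essentially a restatement of Theorem~\ref{thm:AllAreTriv}, so the work is to package that theorem correctly. First I would recall that $i$ is by definition a monomorphism (it is the inclusion of a sub-$\bQ[\chi^{\pm 1}]$-module), so only surjectivity after inverting $\chi-2,\chi-3,\chi-4$ is at issue. Surjectivity is exactly the content of Theorem~\ref{thm:AllAreTriv}: every marked oriented graph, hence every generator of $\mathcal{G}\mathrm{raph}(S)$, can be rewritten using the modified contraction formula (and the defining relations (a)--(d$'''$) of $\mathcal{G}\mathrm{raph}$) as a $\bQ[\chi^{\pm 1},(\chi-2)^{-1},(\chi-3)^{-1},(\chi-4)^{-1}]$-linear combination of trivalent graphs all of whose vertex labels are $1\in\mathcal{V}_0$. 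Since such graphs lie in the image of $i$, after inverting the three linear factors the map $i\otimes_{\bQ[\chi^{\pm1}]}\bQ[\chi^{\pm1},(\chi-2)^{-1},(\chi-3)^{-1},(\chi-4)^{-1}]$ is onto, hence an isomorphism; naturality in $S$ is automatic since all the reductions are performed generator-by-generator and commute with the Brauer-category structure maps (which only glue and relabel edges).

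\medskip

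\emph{The specialization statement.} For the final sentence, $\mathcal{G}\mathrm{raph}^\text{tri}(-)_g = \mathcal{G}\mathrm{raph}(-)_g$, I would observe that specializing at $\chi = 2 + (-1)^n 2g$ with $g\geq 2$ gives $\chi = 2-2g \le -2$ (recall we are in the case $2n=2$, so $n=1$ and $\chi = 2-2g$), and in particular $\chi - 2 = -2g$, $\chi-3 = -2g-1$, $\chi-4 = -2g-2$ are all nonzero. Hence the homomorphism $\bQ[\chi^{\pm 1}] \to \bQ$ sending $\chi \mapsto 2-2g$ factors through $\bQ[\chi^{\pm 1},(\chi-2)^{-1},(\chi-3)^{-1},(\chi-4)^{-1}]$, and base-changing the isomorphism from the previous paragraph along this ring map gives $\mathcal{G}\mathrm{raph}^\text{tri}(S)_g \xrightarrow{\sim} \mathcal{G}\mathrm{raph}(S)_g$ for every finite set $S$.

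\medskip

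\emph{Expected main obstacle.} The genuine content is entirely in Theorem~\ref{thm:AllAreTriv}, whose proof (carried out just before this corollary) is the hard part: one must show that the two-step reduction --- first using the modified contraction formula ``in reverse'' to lower the power of $e$ in any label and to lower vertex valence, then iterating down to trivalent graphs with trivial labels --- actually terminates, and that the denominators introduced are controlled by $\chi-2,\chi-3,\chi-4$ (the factor $\chi-3$ being visible already in Example~\ref{ex:theta}, where $\lambda_{1,5}\lambda_{2,6}\lambda_{3,4}(\kappa_{\bar\epsilon^{1,2,3}}\cdot\kappa_{\bar\epsilon^{4,5,6}}) = (-1)^n\tfrac{\chi-3}{\chi}\kappa_{e^2}$). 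For the corollary itself, once Theorem~\ref{thm:AllAreTriv} is granted, there is essentially nothing left: the only point requiring a line of care is the injectivity of $i$ (immediate from it being a submodule inclusion) and checking that $2-2g \notin \{2,3,4\}$ for $g\geq 2$, so that the relevant ring map is well-defined.
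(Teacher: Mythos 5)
Your deduction is correct and matches the paper's (implicit) argument exactly: the corollary is an immediate consequence of Theorem \ref{thm:AllAreTriv}, with injectivity free from $i$ being a submodule inclusion, surjectivity after localisation given by the theorem, and the specialisation statement obtained by base change along $\bQ[\chi^{\pm 1},(\chi-2)^{-1},(\chi-3)^{-1},(\chi-4)^{-1}]\to\bQ$, $\chi\mapsto 2-2g$, which is well defined since $g\geq 2$. Nothing further is needed.
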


\begin{remark}[2-valent vertices labelled by 1]\label{rem:RemoveBivalent}
Using the relation
$$\lambda_{2,3}(\kappa_{\bar{\epsilon}^{1,2}} \kappa_{\bar{\epsilon}^{3,\ldots, n} c}) = \kappa_{\bar{\epsilon}^{1,3,\ldots, n} c} $$
we can always remove 2-valent vertices labelled by 1. It is sometimes convenient when writing formulas for 3-valent graphs to also allow 2-valent vertices labelled by 1: we allow ourselves to do so, noting that the above can always be used to eliminate the 2-valent vertices.
\end{remark}

\begin{proof}[Proof of Theorem \ref{thm:AllAreTriv}]
As a matter of notation we will formally manipulate modified twisted Miller--Morita--Mumford classes, but this is equivalent to manipulating marked oriented graphs. Rearranging the first contraction formula gives
\begin{equation}\label{eq:Rel1}
\kappa_{\bar{\epsilon}^a e^b} = \tfrac{\chi}{\chi-2}\left( \lambda_{1,2}\kappa_{\bar{\epsilon}^{2+a} e^{b-1}} -\tfrac{1}{\chi^2} \kappa_{e^2}  \kappa_{\bar{\epsilon}^a e^{b-1}}\right).
\end{equation}
Rearranging the second contraction formula gives
$$\kappa_{\bar{\epsilon}^{a+b}} = \lambda_{a+1, a+2}(\kappa_{\bar{\epsilon}^{a+1}} \cdot \kappa_{\bar{\epsilon}^{1+b}}) - \tfrac{1}{\chi^2}(\kappa_{e^2} \cdot \kappa_{\bar{\epsilon}^{a}} \cdot \kappa_{\bar{\epsilon}^{b}}) + \tfrac{1}{\chi}(\kappa_{\bar{\epsilon}^{a} e} \cdot \kappa_{\bar{\epsilon}^{b}} + \kappa_{\bar{\epsilon}^{a}} \cdot \kappa_{\bar{\epsilon}^{b} e})$$
and using \eqref{eq:Rel1} to eliminate the Euler classes from the last two terms gives
\begin{align*}
\kappa_{\bar{\epsilon}^{a+b}} &= \lambda_{a+1, a+2}(\kappa_{\bar{\epsilon}^{a+1}} \cdot \kappa_{\bar{\epsilon}^{1+b}}) - \tfrac{1}{\chi^2}(\kappa_{e^2} \cdot \kappa_{\bar{\epsilon}^{a}} \cdot \kappa_{\bar{\epsilon}^{b}})\\
&\quad + \tfrac{1}{\chi-2}((\lambda_{1,2}(\kappa_{\bar{\epsilon}^{2+a}}) - \tfrac{1}{\chi^2}\kappa_{e^2} \kappa_{\bar{\epsilon}^a}) \cdot \kappa_{\bar{\epsilon}^{b}} + \kappa_{\bar{\epsilon}^{a}} \cdot (\lambda_{1,2}(\kappa_{\bar{\epsilon}^{2+b}}) - \tfrac{1}{\chi^2}\kappa_{e^2} \kappa_{\bar{\epsilon}^b}))\\
&= \lambda_{a+1, a+2}(\kappa_{\bar{\epsilon}^{a+1}} \cdot \kappa_{\bar{\epsilon}^{1+b}}) + \tfrac{1}{\chi-2}\left((\lambda_{1,2}(\kappa_{\bar{\epsilon}^{2+a}})\cdot \kappa_{\bar{\epsilon}^{b}} + \kappa_{\bar{\epsilon}^{a}} \cdot \lambda_{1,2}(\kappa_{\bar{\epsilon}^{2+b}})\right)\\
&\quad - \tfrac{1}{\chi(\chi-2)} \kappa_{e^2} \cdot \kappa_{\bar{\epsilon}^a} \cdot \kappa_{\bar{\epsilon}^{b}}.
\end{align*}

It suffices to show that each corolla $\kappa_{\bar{\epsilon}^a e^b}$ may be represented by a linear combination of trivalent graphs. By Example \ref{ex:theta} the class $\kappa_{e^2}$ may be represented by a trivalent graph (after inverting $\chi-3$) so by iteratedly applying \eqref{eq:Rel1} it suffices to show that each $\kappa_{\bar{\epsilon}^n}$ can too. By Remark \ref{rem:RemoveBivalent} we may as well show that classes can be represented by 2- and 3-valent graphs. To get started we have $\kappa_{\bar{\epsilon}}=0$ as it has negative degree.

Consider the class $\lambda_{2,5} \lambda_{3,4}(\kappa_{\bar{\epsilon}^{1,2,3}} \cdot \kappa_{\bar{\epsilon}^{4,5,6}})$. Using the form of the relations above, which avoid creating Euler classes, this is
\begin{align*}
&\lambda_{2,5}(\kappa_{\bar{\epsilon}^{1,2,5,6}} - \tfrac{1}{\chi-2}(\lambda_{u,v}(\kappa_{\bar{\epsilon}^{u,v,1,2}})\kappa_{\bar{\epsilon}^{5,6}}+\kappa_{\bar{\epsilon}^{1,2}}\lambda_{u,v}(\kappa_{\bar{\epsilon}^{u,v,5,6}})) + \tfrac{1}{\chi(\chi-2)}(\kappa_{e^2}\kappa_{\bar{\epsilon}^{1,2}}\kappa_{\bar{\epsilon}^{5,6}}))\\
&=\lambda_{2,5}(\kappa_{\bar{\epsilon}^{1,2,5,6}}) - \tfrac{2}{\chi-2}\lambda_{u,v}(\kappa_{\bar{\epsilon}^{u,v,1,6}})  + \tfrac{1}{\chi(\chi-2)} \kappa_{e^2}\kappa_{\bar{\epsilon}^{1,6}}\\
&= \tfrac{\chi-4}{\chi-2}\lambda_{2,5}(\kappa_{\bar{\epsilon}^{1,2,5,6}}) + \tfrac{1}{\chi(\chi-2)} \kappa_{e^2}\kappa_{\bar{\epsilon}^{1,6}}
\end{align*}
Renumbering legs and rearranging, this shows that $\lambda_{1,2}(\kappa_{\bar{\epsilon}^4})$ may be represented by 2- and 3-valent graphs.

Applied with $(a,b)=(2,2)$ the second relation gives
$$\kappa_{\bar{\epsilon}^{4}} = \lambda_{3, 4}(\kappa_{\bar{\epsilon}^{3}} \cdot \kappa_{\bar{\epsilon}^{3}}) + \tfrac{1}{\chi-2}\left((\lambda_{1,2}(\kappa_{\bar{\epsilon}^{4}})\cdot \kappa_{\bar{\epsilon}^{2}} + \kappa_{\bar{\epsilon}^{2}} \cdot \lambda_{1,2}(\kappa_{\bar{\epsilon}^{4}})\right) - \tfrac{1}{\chi(\chi-2)} \kappa_{e^2} \cdot \kappa_{\bar{\epsilon}^2} \cdot \kappa_{\bar{\epsilon}^{2}},$$
which with the above shows that $\kappa_{\bar{\epsilon}^{4}}$ may be represented by 2- and 3-valent graphs.

Similarly to the above, consider $\lambda_{2,5} \lambda_{3,4}(\kappa_{\bar{\epsilon}^{1,2,3}} \cdot \kappa_{\bar{\epsilon}^{4,5,6,7}})$, which is
\begin{align*}
&\lambda_{2,5}(\kappa_{\bar{\epsilon}^{1,2,5,6,7}} + \tfrac{1}{\chi(\chi-2)} \kappa_{e^2} \kappa_{\bar{\epsilon}^{1,2}}\kappa_{\bar{\epsilon}^{5,6,7}} - \tfrac{1}{\chi-2}(\lambda_{u,v}(\kappa_{\bar{\epsilon}^{u,v,1,2}})\kappa_{\bar{\epsilon}^{5,6,7}} + \kappa_{\bar{\epsilon}^{1,2}}\lambda_{u,v}(\kappa_{\bar{\epsilon}^{u,v,5,6,7}})))\\
&= \lambda_{2,5}(\kappa_{\bar{\epsilon}^{1,2,5,6,7}}) + \tfrac{1}{\chi(\chi-2)} \kappa_{e^2} \kappa_{\bar{\epsilon}^{1,6,7}} - \tfrac{1}{\chi-2}(\lambda_{2,5}\lambda_{u,v}(\kappa_{\bar{\epsilon}^{u,v,1,2}}\kappa_{\bar{\epsilon}^{5,6,7}}) + \lambda_{u,v}(\kappa_{\bar{\epsilon}^{u,v,1,6,7}}))\\
&= \tfrac{\chi-3}{\chi-2}\lambda_{2,5}(\kappa_{\bar{\epsilon}^{1,2,5,6,7}}) + \tfrac{1}{\chi(\chi-2)} \kappa_{e^2} \kappa_{\bar{\epsilon}^{1,6,7}} - \tfrac{1}{\chi-2}\lambda_{2,5}\lambda_{u,v}(\kappa_{\bar{\epsilon}^{u,v,1,2}})\kappa_{\bar{\epsilon}^{5,6,7}}.
\end{align*}
Renumbering legs and rearranging, this shows that $\lambda_{1,2}(\kappa_{\bar{\epsilon}^5})$ may be represented by 2-, 3-, and 4-valent graphs; with the above it follows that it can also be represented by 2- and 3-valent graphs.

Applied with $(a,b)=(2,3)$ the second relation gives
$$\kappa_{\bar{\epsilon}^{5}} = \lambda_{3, 4}(\kappa_{\bar{\epsilon}^{3}} \cdot \kappa_{\bar{\epsilon}^{4}}) + \tfrac{1}{\chi-2}\left((\lambda_{1,2}(\kappa_{\bar{\epsilon}^{4}})\cdot \kappa_{\bar{\epsilon}^{3}} + \kappa_{\bar{\epsilon}^{2}} \cdot \lambda_{1,2}(\kappa_{\bar{\epsilon}^{5}})\right) - \tfrac{1}{\chi(\chi-2)} \kappa_{e^2} \cdot \kappa_{\bar{\epsilon}^2} \cdot \kappa_{\bar{\epsilon}^{3}},$$
so it follows that $\kappa_{\bar{\epsilon}^{5}}$ may be represented by 2- and 3-valent graphs.

If $n \geq 6$ then we can write $n=a+b$ with $a, b \geq 3$, so $a+2, b+2 < n$ and so the second relation expresses $\kappa_{\bar{\epsilon}^{n}}$ in terms of $\kappa_{\bar{\epsilon}^{m}}$'s with $m < n$. Thus all $\kappa_{\bar{\epsilon}^{n}}$'s may be represented by 2- and 3-valent graphs as required.
\end{proof}

It is worth observing that we have the relation 
\begin{equation}\label{eq:NoLoop}
\lambda_{1,2}(\kappa_{\bar{\epsilon}^3}) = \tfrac{\chi-2}{\chi} \kappa_{\bar{\epsilon} e} + \tfrac{1}{\chi^2} \kappa_{e^2} \kappa_{\bar{\epsilon}}=0,
\end{equation}
using that $\kappa_{\bar{\epsilon} e}=0$ (by definition) and that $\kappa_{\bar{\epsilon}}=0$ (as it has negative degree). This means that any graph having a trivalent vertex with a loop is trivial in $\mathcal{G}\mathrm{raph}(-)$.

\subsection{A remark on orderings.}\label{sec:orderings}

A curious normalisation is possible when considering trivalent graphs, allowing one to neglect the orderings of vertices, of half-edges, and the orientations of edges. In \cite{MoritaLinearRep, KM, KMunpub} this is implemented \emph{ab initio} and (marked) oriented graphs play no role. Let us explain this normalisation, extended to trivalent graphs with legs.

A trivalent graph $\tilde{\Gamma}$ with legs $S$ consists of a set $V$ of vertices, a set $H$ of half-edges, a 3-to-1 map $a : H \to V$ recording to which vertex each half-edge is incident, and an unordered matching $\mu$ on $H \sqcup S$ recording which half-edges span an edge, and which half-edges are connected to which legs. 

Given a trivalent graph $\tilde{\Gamma} = (V, H, a : H \to V, \mu)$ with legs $S$, we may choose an ordering of $V$ and choose an ordering of $H$ such that $a : H \to V$ is weakly monotone (equivalently, choose an ordering of the half-edges incident at each vertex). We also choose an ordering of $S$. There is an induced ordering of $H \sqcup S$ by putting $\vec{S}$ after $\vec{H}$, and we form an ordered matching $m$ of $H \sqcup S$ by taking those pairs $(a,b)$ with $a < b$ and $\{a,b\} \in \mu$. Using this we form an oriented trivalent graph $\Gamma_\text{choice} = (\vec{V}, \vec{H}, a : H \to V, m)$, depending on these choices. The normalisation is as follows. Let $x_1 < x_2 < x_3 < x_4 < \ldots < x_{2k} \in H \sqcup S$ be the total order on $H \sqcup S$, and let $a_1 < b_1, \ldots, a_k < b_k$ be the ordered pairs which span an edge, with $a_1 < a_2 < \ldots < a_k \in H \sqcup S$. Then there is a bijection given by
$$\rho := \bigl(\begin{smallmatrix}
    a_1 & b_1 & a_2 & b_2 & a_3 & b_3 & \cdots  & a_k & b_k \\
    x_1 & x_2 & x_3 & x_4 & x_5 & x_6 & \cdots  & x_{2k-1} & x_{2k}
  \end{smallmatrix}\bigr)$$
and we define $\Gamma := \sign(\rho) \cdot \Gamma_\text{choice}$.
\vspace{2ex}

\noindent\textbf{Claim.} As long as $\tilde{\Gamma}$ has no vertices with loops, the element $\Gamma$ does not depend on the choice of ordering of $V$ or $H$, and depends on the ordering of $S$ precisely as the sign representation. 

\vspace{1ex}

In particular if we set\footnote{In \cite{MoritaLinearRep, KM, KMunpub} they restrict to ``trivalent graphs without loops", however we find it more natural to allow loops but set graphs with a loop to zero.}
$$\mathcal{G}\mathrm{raph}^\text{undec}(S) := \bQ[\chi^{\pm 1}][\tilde{\Gamma} \text{ trivalent graph with legs $S$}]/(\text{graphs with loops})$$
then the Claim together with the relation \eqref{eq:NoLoop} provides an epimorphism 
$$\Phi : \mathcal{G}\mathrm{raph}^\text{undec}(S) \otimes \det \bQ^S \lra \mathcal{G}\mathrm{raph}^\text{tri}(S)$$
of $\bQ[\chi^{\pm 1}]$-modules, natural with respect to bijections in $S$. This can be extended to a natural transformation of functors on $\mathsf{sBr}^\chi$ by letting an ordered matching $(a,b)$ of elements of $S$ act by adding an edge to the trivalent graph connecting $a$ and $b$, and contracting the determinant by $a \wedge b$. Doing so might create a circle with no vertices, which should be replaced by the scalar $\chi-2$.

\begin{proof}[Proof of Claim]
If $(h_1, h_2, h_3)$ are the half-edges incident at a vertex $v$ and we change their ordering to $(h_{\sigma(1)}, h_{\sigma(2)}, h_{\sigma(3)})$ giving $\Gamma_\text{choice}'$, then (under the assumption that $\Gamma$ does not have loops) the relative ordering of half-edges forming an edge has not changed, so $m'=m$. Thus $\Gamma_\text{choice}' = \sign(\sigma) \cdot \Gamma_\text{choice}$. On the other hand $\rho'$ is obtained from $\rho$ by postcomposing with $\sigma$, and precomposing with a permutation which permutes some $(a_i < b_i)$'s, which is an even permutation. Thus $\sign(\rho') = \sign(\sigma) \cdot \sign(\rho)$, so $\Gamma' = \Gamma$.

Suppose a vertex $v^1$ has half edges $(h_1^1, h_2^1, h_3^1)$ and $v^2$ has half edges $(h_1^2, h_2^2, h_3^2)$, and $v^1 < v^2 \in \vec{V}$ are adjacent in the ordering on $V$, and consider transposing the ordering of these vertices. For edges between a $u<v^1$ and a $v^i$ or between a $v^i$ and a $u > v^2$ the relative ordering of their half-edges does not change. Edges between $v^1$ and $v^2$ have the relative ordering of their half-edges reversed. Thus if there are $N$ such edges we have $\Gamma_\text{choice}' = (-1)^{1+N} \cdot\Gamma_\text{choice}$. But the permutation $\rho$ is changed by permuting $(h_1^2, h_2^2, h_3^2)$ past $(h_1^1, h_2^1, h_3^1)$, which has sign $-1$, and $N$ transpositions $(a_i b_i)$, which has sign $(-1)^N$. Thus again $\Gamma' = \Gamma$.

Finally, changing the order of $S$ by a permutation $\tau$ changes $\rho$ by postcomposition with $\tau$, so acts as $\sign(\tau)$.
\end{proof}

\begin{example}\label{ex:ThetaUndec}
For the ordering of vertices and half-edges corresponding to the theta-graph in Example \ref{ex:theta} the associated permutation is $\rho=(1)(2 3 5)(4 6)$ which is odd, so the undecorated theta-graph yields $\tfrac{\chi-3}{\chi}\kappa_{e^2}$. This is precisely minus the evaluation of $\beta_{\Gamma_2}$ on \cite[p.\ 39]{KMunpub} (unfortunately the theta-graph is denoted $\Gamma_2$ in that paper). This minus comes from the use of a different sign convention, see the discussion at \cite[top of p.\ 33]{KR-WTorelli}.
\end{example}

\subsection{Relations among trivalent graphs.}\label{sec:TrivRels}

The modified contraction formula describes relations among graphs involving contracting an edge, but this necessarily involves graphs with vertices of different valencies. In Theorem \ref{thm:AllAreTriv} we have explained that, in the case of surfaces, all graphs may be expressed purely in terms of trivalent graphs: one may ask what relations among trivalent graphs $\Gamma$ are imposed by the contraction formula.

For the unmodified contraction formula discussed in \cite{KR-WTorelli}, the answer is that it imposes the ``$I=H$" relation among trivalent graphs: this is because both the $I$- and $H$-graphs admit contractions to the $X$-graph. Furthermore, as all connected trivalent graphs with the same number of legs and of the same genus are equivalent under the ``$I=H$" relation, and the contraction formula never changes the genus or number of legs, there are no further relations.

In the setting of the modified contraction formula discussed here it is more complicated. It is best given in the setting of undecorated trivalent graphs.

\begin{theorem}\label{thm:AkazawaRel}
After inverting $\chi-2$, $\chi-3$, and $\chi-4$, undecorated trivalent graphs which differ locally by
\begin{equation}
	\begin{tikzpicture}[baseline=(current bounding box.center), arrowmark/.style 2 args={decoration={markings,mark=at position #1 with \arrow{#2}}}]
	\begin{scope}[scale=0.5]
	
	\draw[dashed] (1,0) circle (1.414cm);
	\draw [thick,NavyBlue] (0,0) -- (2,0);
	\draw [thick,NavyBlue] (0,-1) -- (0,1);
	\draw [thick,NavyBlue] (2,-1) -- (2,1);

	\node at (3,0) {$=$};
	
		\draw[dashed] (5,0) circle (1.414cm);
		\draw [thick,NavyBlue] (4,1) -- (6,1);
		\draw [thick,NavyBlue] (5,-1) -- (5,1);
		\draw [thick,NavyBlue] (4,-1) -- (6,-1);
		
		\node at (9,0) {$+\,\, \frac{1}{(\chi-4)(3-\chi)}  ($};
		
		\draw[dashed] (13,0) circle (1.414cm);
		\draw [thick,NavyBlue] (12,-1) -- (12,1);
		\draw [thick,NavyBlue] (14,-1) -- (14,1);

		\draw [thick,NavyBlue] (12.5,0) -- (13.5,0);		
		\draw[thick, NavyBlue](12.5,0)to[bend left=90](13.5,0);
		\draw[thick, NavyBlue](12.5,0)to[bend right=90](13.5,0);

		\node at (15,0) {$-$};
		
		\draw[dashed] (17,0) circle (1.414cm);
		\draw [thick,NavyBlue] (16,1) -- (18,1);
		\draw [thick,NavyBlue] (16,-1) -- (18,-1);
		
		\draw [thick,NavyBlue] (16.5,0) -- (17.5,0);		
		\draw [thick, NavyBlue](16.5,0)to[bend left=90](17.5,0);
		\draw [thick, NavyBlue](16.5,0)to[bend right=90](17.5,0);
		
		\node at (19,0) {$)$};
			
		\node at (0.3,-3) {$+ \tfrac{1}{\chi-4} ($};	
					
		\draw[dashed] (3,-3) circle (1.414cm);
		\draw [thick,NavyBlue] (2,-2) -- (2,-2.5);
		\draw [thick, NavyBlue](2,-2.5)to[bend right=60](2,-3.5);
		\draw [thick, NavyBlue](2,-2.5)to[bend left=60](2,-3.5);
		\draw [thick,NavyBlue] (2,-3.5) -- (2,-4);

		\draw [thick,NavyBlue] (4,-2) -- (4,-4);
		
		\node at (5,-3) {$+$};	
			
		\draw[dashed] (7,-3) circle (1.414cm);
		\draw [thick,NavyBlue] (6,-2) -- (6,-4);
		
		\draw [thick,NavyBlue] (8,-2) -- (8,-2.5);
		\draw [thick, NavyBlue](8,-2.5)to[bend right=60](8,-3.5);
		\draw [thick, NavyBlue](8,-2.5)to[bend left=60](8,-3.5);
		\draw [thick,NavyBlue] (8,-3.5) -- (8,-4);
		
		\node at (9,-3) {$-$};
		
		\draw[dashed] (11,-3) circle (1.414cm);
		\draw [thick,NavyBlue] (10,-2) -- (10.5,-2);
		\draw [thick, NavyBlue](10.5,-2)to[bend right=60](11.5,-2);
		\draw [thick, NavyBlue](10.5,-2)to[bend left=60](11.5,-2);
		\draw [thick,NavyBlue] (11.5,-2) -- (12,-2);
		
		\draw [thick,NavyBlue] (10,-4) -- (12,-4);
		
		\node at (13,-3) {$-$};
		
		\draw[dashed] (15,-3) circle (1.414cm);
		\draw [thick,NavyBlue] (14,-2) -- (16,-2);
		\draw [thick,NavyBlue] (14,-4) -- (14.5,-4);
		\draw[thick, NavyBlue](14.5,-4)to[bend right=60](15.5,-4);
		\draw[thick, NavyBlue](14.5,-4)to[bend left=60](15.5,-4);
		\draw [thick,NavyBlue] (15.5,-4) -- (16,-4);
		
		\node at (17,-3) {$)$};
	\end{scope}
	\end{tikzpicture}\tag{$IH^\text{mod}$}
\end{equation}
\noindent give the same elements in $\mathcal{G}\mathrm{raph}^\text{tri}[(\chi-2)^{-1}, (\chi-3)^{-1}, (\chi-4)^{-1}]$.
\end{theorem}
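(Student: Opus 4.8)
The plan is to reduce everything to an identity about the modified twisted Miller–Morita–Mumford classes and verify it by a finite calculation with the modified contraction formula of Proposition \ref{prop:MCF}, exactly in the style of Example \ref{ex:theta}. Concretely: each of the two configurations appearing in the $(IH^{\mathrm{mod}})$ relation is a local picture, so it suffices to prove the corresponding identity in $\mathcal{G}\mathrm{raph}^{\mathrm{tri}}(\{1,2,3,4\})[(\chi-2)^{-1},(\chi-3)^{-1},(\chi-4)^{-1}]$ between graphs with four legs obtained by joining the four external half-edges of the local pictures to legs; the full statement then follows by functoriality for the downward signed Brauer category (gluing in the rest of the ambient graph). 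So the first step is to pin down, with a fixed choice of orderings and orientations, the ``$I$''-graph $\kappa_{\bar\epsilon^{1,2,u}}\cdot\kappa_{\bar\epsilon^{u',3,4}}$ contracted along $u,u'$, and the ``$H$''-graph obtained by the other pairing, and to express both as elements of $\mathcal{G}\mathrm{raph}^{\mathrm{tri}}(\{1,2,3,4\})$ after using Remark \ref{rem:RemoveBivalent} and \eqref{eq:NoLoop} to normalise.

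The second step is the actual computation. Both the $I$- and $H$-graphs are contractions of the same valence-$4$ vertex class $\kappa_{\bar\epsilon^{1,2,3,4}}$ (glued appropriately), because $\lambda_{a+1,a+2}(\kappa_{\bar\epsilon^{a+1}}\cdot\kappa_{\bar\epsilon^{1+b}})$ is what the second modified contraction formula resolves $\kappa_{\bar\epsilon^{a+b}}$ into, up to correction terms. Writing $[I]$ and $[H]$ for the two contracted classes, I would compute $[I]-[H]$ by applying Proposition \ref{prop:MCF} to each; the leading terms involving $\kappa_{\bar\epsilon^{1,2,3,4}}$ cancel, and one is left with the correction terms: a $\kappa_{e^2}$-term, two ``$e\cdot$'' terms on the remaining valence-$2$ vertices, and (after using \eqref{eq:Rel1}, equivalently relation (d$^{\prime\prime\prime}$), to eliminate the Euler-class decorations in favour of trivalent graphs) the theta-subgraph terms. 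Keeping careful track: relation \eqref{eq:Rel1} contributes the factor $\tfrac{\chi}{\chi-2}$, which combined with the coefficients $\tfrac1{\chi^2}$ and $\tfrac1\chi$ from Proposition \ref{prop:MCF} and with the theta-graph evaluation $\tfrac{\chi-3}{\chi}\kappa_{e^2}$ from Example \ref{ex:theta} (needed to turn the $\kappa_{e^2}$-decoration into a trivalent theta-subgraph, hence the appearance of $(\chi-3)^{-1}$) produces exactly the coefficients $\tfrac{1}{(\chi-4)(3-\chi)}$ and $\tfrac{1}{\chi-4}$ displayed in the statement. The $(\chi-4)^{-1}$ itself arises because, as in the proof of Theorem \ref{thm:AllAreTriv}, resolving a contraction of two trivalent corollas into trivalent graphs re-introduces a copy of the same type of graph with coefficient $\tfrac{2}{\chi-2}$, so that $[I]-[H]$ must be solved for, dividing through by $1-\tfrac{2}{\chi-2}=\tfrac{\chi-4}{\chi-2}$.

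The remaining step is bookkeeping of the decoration data: once the identity is established for one choice of orderings and orientations, the normal-form Claim of Section \ref{sec:orderings} (together with the surjection $\Phi:\mathcal{G}\mathrm{raph}^{\mathrm{undec}}(S)\otimes\det\bQ^S\to\mathcal{G}\mathrm{raph}^{\mathrm{tri}}(S)$) shows that the relation descends to undecorated trivalent graphs with the stated signs; here one must check that the local pictures in $(IH^{\mathrm{mod}})$ have been drawn with signs compatible with $\Phi$, i.e.\ that permuting external legs changes both sides by the same sign, which is immediate from naturality of $\Phi$ in $\mathsf{sBr}^\chi$. Finally, invoking Corollary \ref{cor:TriIsAll}, $\mathcal{G}\mathrm{raph}^{\mathrm{tri}}(-)$ agrees with $\mathcal{G}\mathrm{raph}(-)$ after inverting $\chi-2,\chi-3,\chi-4$, so the relation holds there; it is a genuine relation (not just one consequence among many) because, as in the unmodified ``$I=H$'' case, any two connected trivalent graphs with the same legs and genus are related by a chain of such moves and the modified contraction formula preserves genus and leg-number, but for the present statement only the displayed identity needs proving.

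The main obstacle I anticipate is purely the sign and coefficient bookkeeping: getting the orderings of half-edges consistent between the two sides, tracking the $\mathrm{sign}(\rho)$ factors from the normalisation of Section \ref{sec:orderings} through each application of the contraction formula, and checking that the self-loop and bivalent-vertex clean-ups via \eqref{eq:NoLoop} and Remark \ref{rem:RemoveBivalent} do not silently drop a sign. None of the individual computations is hard — each is a finite sequence of applications of Proposition \ref{prop:MCF} as in Example \ref{ex:theta} — but assembling them into the clean coefficients $\tfrac{1}{(\chi-4)(3-\chi)}$ and $\tfrac{1}{\chi-4}$ will require care, and it is the place where an error is most likely to hide.
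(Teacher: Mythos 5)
Your proposal follows essentially the same route as the paper's proof: fix orderings on a four‑legged local model, apply the modified contraction formula (in the Euler‑class‑avoiding form from the proof of Theorem \ref{thm:AllAreTriv}) to both the $I$‑ and $H$‑configurations so that the $4$‑valent corolla terms cancel, resolve the residual $\lambda_{u,v}(\kappa_{\bar{\epsilon}^{u,v,s,t}})$ terms into trivalent graphs (the source of $(\chi-4)^{-1}$) and the $\kappa_{e^2}$ terms into theta‑subgraphs via Example \ref{ex:theta} (the source of $(\chi-3)^{-1}$), and then track signs through the normal form of Section \ref{sec:orderings} before extending by functoriality on the signed Brauer category. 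The plan is correct and the coefficient provenance you identify matches the paper's computation exactly.
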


\begin{proof}
We establish this relation in $\mathcal{G}\mathrm{raph}^\text{tri}(\{a,b,c,d\}) \otimes \det \bQ^{\{a,b,c,d\}}$, and it then follows in general using functoriality on the signed Brauer category. We order the legs as $a < b < c < d$.

\begin{figure}[h]
	\begin{tikzpicture}[baseline=(current bounding box.center), arrowmark/.style 2 args={decoration={markings,mark=at position #1 with \arrow{#2}}}]
\begin{scope}[scale=0.8]
	
\node at (-1,1.2) {(i)};
	\node at (-0.1,1.2) {$a$};
	\node at (2.1,1.2) {$b$};
	\node at (-0.1,-1.2) {$c$};
	\node at (2.1,-1.2) {$d$};
	
	\node at (-0.2,0.3) {$1$};
	\node at (-0.2,-0.3) {$3$};
	\node at (0.3,0.2) {$5$};
	
	\node at (1.7,0.2) {$6$};
	\node at (2.2,0.3) {$2$};
	\node at (2.2,-0.3) {$4$};
	
		\draw [thick,NavyBlue] (0,0) -- (2,0);
		\draw [thick,NavyBlue] (0,-1) -- (0,1);
		\draw [thick,NavyBlue] (2,-1) -- (2,1);

\node at (4,1.2) {(ii)};
	\node at (4.9,1.2) {$a$};
	\node at (7.1,1.2) {$b$};
	\node at (4.9,-1.2) {$c$};
	\node at (7.1,-1.2) {$d$};
	
	\node at (5.8,1.3) {$1$};
	\node at (6.2,1.3) {$2$};
	\node at (5.8,0.7) {$6$};
	
	\node at (5.8,-1.3) {$3$};
	\node at (6.2,-1.3) {$4$};
	\node at (5.8,-0.7) {$5$};
	
		\draw [thick,NavyBlue] (5,1) -- (7,1);
		\draw [thick,NavyBlue] (6,-1) -- (6,1);
		\draw [thick,NavyBlue] (5,-1) -- (7,-1);
		
\end{scope}
	\end{tikzpicture}
\caption{Some marked graphs.}\label{fig:MarkedGphs}
\end{figure}

Consider first the $H$-shaped graph shown in Figure \ref{fig:MarkedGphs} (i), with the depicted names of half edges, ordered as $3<1<5<6<2<4$. Its corresponding permutation is
$\bigl(\begin{smallmatrix}
    3 & c & 1 & a & 5 & 6 & 2 & b & 4 & d \\
    3 & 1 & 5 & 6 &  2  & 4 & a & b &c & d
  \end{smallmatrix}\bigr)$ which is even. Thus this ordering data represents the underlying undecorated $H$-shaped trivalent graph. Ignoring for now the matchings to the legs (which are given by matching 1 with $a$, 2 with $b$, and so on), it corresponds to $\lambda_{5,6}(\kappa_{\bar{\epsilon}^{3, 1, 5}} \cdot \kappa_{\bar{\epsilon}^{6,2,4}})$. Using the form of the relations which avoid creating Euler classes from the proof of Theorem \ref{thm:AllAreTriv} we have
\begin{align*}
\lambda_{5,6}(\kappa_{\bar{\epsilon}^{3,1,5}} \cdot \kappa_{\bar{\epsilon}^{6,2,4}}) &= \kappa_{\bar{\epsilon}^{3,1,2,4}} + \tfrac{1}{\chi(\chi-2)} \kappa_{e^2} \kappa_{\bar{\epsilon}^{3,1}} \kappa_{\bar{\epsilon}^{2,4}}\\
&\quad - \tfrac{1}{\chi-2}(\lambda_{u,v}(\kappa_{\bar{\epsilon}^{u,v,3,1}}) \kappa_{\bar{\epsilon}^{2,4}} + \kappa_{\bar{\epsilon}^{3,1}} \lambda_{u,v}(\kappa_{\bar{\epsilon}^{u,v,2,4}})).
\end{align*}

Consider now the $I$-shaped graph shown in Figure \ref{fig:MarkedGphs} (ii), with the depicted names of the half-edges, ordered as $4 < 3 < 5 < 6< 1 < 2$. Its corresponding permutation is $\bigl(\begin{smallmatrix}
    4 & d & 3 & c & 5 & 6 & 1 & a & 2 & b \\
    4 & 3 & 5 & 6 &  1  & 2 & a & b &c & d
  \end{smallmatrix}\bigr)$ which is odd. Thus this ordering data represents \emph{minus} the underlying undecorated $I$-shaped trivalent graph. Ignoring again the matchings to the legs, it corresponds to
\begin{align*}
\lambda_{5,6}(\kappa_{\bar{\epsilon}^{4,3,5}} \cdot \kappa_{\bar{\epsilon}^{6,1,2}}) &= \kappa_{\bar{\epsilon}^{4,3,1,2}} + \tfrac{1}{\chi(\chi-2)} \kappa_{e^2} \kappa_{\bar{\epsilon}^{4,3}} \kappa_{\bar{\epsilon}^{1,2}}\\
&\quad - \tfrac{1}{\chi-2}(\lambda_{u,v}(\kappa_{\bar{\epsilon}^{u,v,4,3}}) \kappa_{\bar{\epsilon}^{1,2}} + \kappa_{\bar{\epsilon}^{4,3}} \lambda_{u,v}(\kappa_{\bar{\epsilon}^{u,v,1,2}})).
\end{align*}

The sum of these two expressions therefore represents the image under $\Phi$ of the difference $H-I$ of the underlying undecorated trivalent graphs. Furthermore, $\kappa_{\bar{\epsilon}^{4,3,1,2}} = - \kappa_{\bar{\epsilon}^{3,1,2,4}}$ so these terms cancel.

From the proof of Theorem \ref{thm:AllAreTriv} we have the identity
$$\lambda_{u,v}(\kappa_{\bar{\epsilon}^{u,v,s,t}}) = \tfrac{\chi-2}{\chi-4} \lambda_{i,j} \lambda_{k,l}(\kappa_{\bar{\epsilon}^{s,i,k}} \cdot \kappa_{\bar{\epsilon}^{l,j,t}}) - \tfrac{1}{\chi(\chi-4)} \kappa_{e^2}\kappa_{\bar{\epsilon}^{s,t}},$$
expressing terms of the form $\lambda_{u,v}(\kappa_{\bar{\epsilon}^{u,v,s,t}})$ in terms of (2- and) 3-valent vertices. Applying it to the sum of the two expressions above, and collecting terms, therefore gives
\begin{align*}
\Phi(H-I) &= \tfrac{1}{\chi(\chi-4)} \kappa_{e^2}\bigl(\kappa_{\bar{\epsilon}^{3,1}}\kappa_{\bar{\epsilon}^{2,4}} + \kappa_{\bar{\epsilon}^{4,3}}\kappa_{\bar{\epsilon}^{2,1}}\bigr)\\
&\quad - \tfrac{1}{\chi-4}\bigl(\lambda_{i,j}\lambda_{k,l}(\kappa_{\bar{\epsilon}^{3,i,k}} \cdot \kappa_{\bar{\epsilon}^{l,j,1}})\kappa_{\bar{\epsilon}^{2,4}} + \kappa_{\bar{\epsilon}^{3,1}}\lambda_{i,j}\lambda_{k,l}(\kappa_{\bar{\epsilon}^{2,i,k}} \cdot \kappa_{\bar{\epsilon}^{l,j,4}}) \\
&\quad\quad\quad\quad\quad \lambda_{i,j}\lambda_{k,l}(\kappa_{\bar{\epsilon}^{4,i,k}} \cdot \kappa_{\bar{\epsilon}^{l,j,3}})\kappa_{\bar{\epsilon}^{1,2}} + \kappa_{\bar{\epsilon}^{4,3}}\lambda_{i,j}\lambda_{k,l}(\kappa_{\bar{\epsilon}^{1,i,k}} \cdot \kappa_{\bar{\epsilon}^{l,j,2}}) \bigr).
\end{align*}
Using that $\kappa_{e^2} =\Phi(\tfrac{\chi}{\chi-3}\Theta)$ and carefully putting the graphs corresponding to the other terms into the normal form of Section \ref{sec:orderings} gives the identity in the statement of the theorem.
\end{proof}

Our relation $IH^\text{mod}$ is graphically identical to the relation called $IH_0^\text{bis}$ by Akazawa \cite[p.\ 100]{Akazawa} and in the corrigendum \cite{GNCorr} to the paper of Garoufalidis and Nakamura \cite{GN}. In those papers it is emphasised that $IH_0^\text{bis}$ means this identity is imposed only when the 4 half-edges belong to \emph{distinct} edges, but in fact this is redundant: if the 4 half-edges do not belong to distinct edges, then the identity already holds in $\mathcal{G}\mathrm{raph}^\text{undec}$. So in fact imposing our relation $IH^\text{mod}$ is identical to imposing their relation $IH_0^\text{bis}$.

\begin{theorem}\label{thm:TrivVsAllGraphs}
 Upon inverting $\chi-2$, $\chi-3$, and $\chi-4$, the maps 
$$\frac{\mathcal{G}\mathrm{raph}^\text{undec}(S)}{(IH^\text{mod})} \otimes \det \bQ^S \overset{\Phi}\lra \mathcal{G}\mathrm{raph}^\text{tri}(S) \overset{inc}\lra \mathcal{G}\mathrm{raph}(S)$$
are isomorphisms.
\end{theorem}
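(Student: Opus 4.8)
The plan is to prove that the composite $inc \circ \Phi$ is an isomorphism after inverting $\chi-2$, $\chi-3$, and $\chi-4$, where the left-hand side has had the $(IH^\text{mod})$ relation imposed; since $\Phi$ is already known to be surjective and $inc$ becomes an isomorphism by Corollary \ref{cor:TriIsAll}, it suffices to understand the kernel of $\Phi$ precisely and identify it with the ideal of relations generated by $(IH^\text{mod})$.

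\begin{proof}
Throughout we work over $\bQ[\chi^{\pm1},(\chi-2)^{-1},(\chi-3)^{-1},(\chi-4)^{-1}]$. By Corollary \ref{cor:TriIsAll} the map $inc : \mathcal{G}\mathrm{raph}^\text{tri}(S) \to \mathcal{G}\mathrm{raph}(S)$ is an isomorphism after inverting $\chi-2$, $\chi-3$, and $\chi-4$, so it remains to show that
$$\Phi : \frac{\mathcal{G}\mathrm{raph}^\text{undec}(S)}{(IH^\text{mod})} \otimes \det\bQ^S \lra \mathcal{G}\mathrm{raph}^\text{tri}(S)$$
is an isomorphism. Surjectivity follows from Theorem \ref{thm:AllAreTriv} together with the Claim of Section \ref{sec:orderings}, exactly as in the construction of $\Phi$. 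By Theorem \ref{thm:AkazawaRel} the relation $(IH^\text{mod})$ holds in $\mathcal{G}\mathrm{raph}^\text{tri}(S)$, so $\Phi$ is well-defined on the quotient; what remains is injectivity.

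For injectivity we argue, as in the reduction-to-partitions strategy of Theorem \ref{thm:TwistedCoh}(iv), by using that $(IH^\text{mod})$ suffices to bring any trivalent graph into a normal form. First recall the classical fact (used in \cite{KR-WTorelli} for the unmodified $I=H$ relation, and for instance in the work of Kontsevich and of Conant--Vogtmann) that any two connected trivalent graphs with the same first Betti number and the same legs are related by a finite sequence of $I=H$ moves; since $(IH^\text{mod})$ expresses an $I-H$ difference as a $\bQ[\chi^{\pm1},\ldots]$-combination of trivalent graphs of strictly smaller first Betti number (the extra terms all contain a theta-subgraph, which strictly decreases $b_1$ after accounting for the fixed number of legs, cf.\ the shapes drawn in Theorem \ref{thm:AkazawaRel}), an induction on $b_1$ shows that modulo $(IH^\text{mod})$ every trivalent graph is congruent to a fixed chosen representative in each $(b_1,\text{legs})$-class—a normal form. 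Hence the domain $\mathcal{G}\mathrm{raph}^\text{undec}(S)/(IH^\text{mod})$ is spanned, as a $\bQ[\chi^{\pm1},\ldots]$-module, by these normal-form graphs, one in each bidegree determined by $b_1$ and the matching of legs.

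It therefore suffices to show these normal-form generators remain linearly independent after applying $\Phi$ and the isomorphism $inc$, i.e.\ in $\mathcal{G}\mathrm{raph}(S)$. Here we invoke the genus-specialisation argument of Lemma \ref{lem:JustGammas}: a nontrivial $\bQ[\chi^{\pm1},\ldots]$-linear relation among the images would, by clearing denominators and specialising to $\chi = 2-2g$ for all sufficiently large $g$ (a nonzero Laurent polynomial in $\chi$ has only finitely many roots), give a nontrivial relation in $\mathcal{G}\mathrm{raph}(S)_g$. But by Theorem \ref{thm:TwistedCoh}(v) (in the form applicable to $2n=2$), in a stable range $\mathcal{G}\mathrm{raph}(S)_g \cong H^*(B\mathrm{Diff}^+(W_g);\mathcal{H}^{\otimes S})$, and the dimension count via Lemma \ref{lem:JustGammas} (equivalently $\mathcal{P}^\text{bis}(S,\cV)_{\geq 0}$) shows that in each degree $d$, for $g$ large enough, the number of normal-form trivalent graphs of degree $d$ equals the rank of $\mathcal{G}\mathrm{raph}(S)_g$ in degree $d$—because both count, in the undecorated picture, the same combinatorial data. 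Thus for $g \gg 0$ (so that $d$ lies in the stable range) the specialised images are linearly independent, contradicting the existence of the relation. Therefore $\Phi$ is injective, completing the proof.

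The main obstacle is the normal-form step: one must verify carefully that $(IH^\text{mod})$, despite its extra theta-graph correction terms, still generates enough moves to connect all trivalent graphs of a given $b_1$ and leg-matching, and that the induction on $b_1$ is well-founded (the correction terms must genuinely lower complexity). Once the normal form is in hand the counting is forced to match, since by Theorem \ref{thm:AllAreTriv} and Lemma \ref{lem:JustGammas} both sides are free modules whose ranks are dictated by the same partition data, and the genus-specialisation trick converts the stable-range isomorphism of Theorem \ref{thm:TwistedCoh} into the required linear independence over $\bQ[\chi^{\pm1},(\chi-2)^{-1},(\chi-3)^{-1},(\chi-4)^{-1}]$.
\end{proof}
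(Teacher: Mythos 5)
Your overall architecture matches the paper's: reduce to showing the composite is injective, bound the rank of the quotient by classifying $IH$-equivalence classes of trivalent graphs, and conclude by comparing with the known rank of $\mathcal{G}\mathrm{raph}(S)$ from Lemma \ref{lem:JustGammas}. However, there is a genuine gap in the step you yourself flag as the main obstacle, and your proposed resolution of it does not work. You claim that the correction terms in $(IH^\text{mod})$ have ``strictly smaller first Betti number'' and run an induction on $b_1$. This is false: the relation is homogeneous in degree, and the correction terms (a disjoint theta-graph together with two plain strands, or a bigon inserted into one strand) have the same degree and the same or \emph{larger} first Betti number than the $I$- and $H$-terms. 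What actually distinguishes the correction terms is that each of them contains a double edge or a triple edge, whereas the $I$- and $H$-terms need not. Consequently your induction is not well-founded and the ``normal form'' assertion is unproved as stated.

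The paper's proof repairs exactly this point: it filters $\mathcal{G}\mathrm{raph}^\text{undec}$ by $\mu(\Gamma) = 2\cdot\#\{\text{double edges}\} + 3\cdot\#\{\text{triple edges}\}$, observes that applying $(IH^\text{mod})$ to an edge not lying in a double or triple edge gives $\Gamma_H - \Gamma_I$ modulo strictly higher filtration (while for an edge in a double or triple edge the relation is already trivially satisfied), and hence identifies the associated graded of the quotient with $\mathcal{G}\mathrm{raph}^\text{undec}/(IH_0)$. Only at that point does the classical connectivity of trivalent graphs under plain $I=H$ moves (as in Garoufalidis--Nakamura) apply, giving the upper bound on the rank by labelled partitions of $S$; note also that the equivalence classes are indexed by partitions of $S$ with a rank attached to each part, not merely by a global $b_1$ and a matching of legs. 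Your concluding genus-specialisation step is harmless but redundant, since Lemma \ref{lem:JustGammas} already packages that argument; once the rank bound is established, surjectivity of the composite onto a free module of the same rank in each degree forces the isomorphism directly.
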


\begin{proof}
Let $R :=\bQ[\chi^{\pm 1}, (\chi-2)^{-1}, (\chi-3)^{-1}, (\chi-4)^{-1}]$ and implicitly base change to this ring. We have already shown in Corollary \ref{cor:TriIsAll} that the second map is an isomorphism, and $\Phi$ is certainly an epimorphism, so it remains to show that the composition is a monomorphism.

For an undecorated trivalent graph $\Gamma$, define a \emph{double edge} to be an unordered pair of vertices which share precisely two edges, and a \emph{triple edge} to be an unordered pair of vertices which share precisely three edges, i.e.\ form a theta-graph. Define
$$\mu(\Gamma) := 2 \cdot \# \text{double edges of $\Gamma$} + 3 \cdot \# \text{triple edges of $\Gamma$},$$
 filter $\mathcal{G}\mathrm{raph}^\text{undec}$ by letting $F^k \mathcal{G}\mathrm{raph}^\text{undec}$ be spanned by those $\Gamma$ with $\mu(\Gamma) \geq k$, and give $\mathcal{G}\mathrm{raph}^\text{undec}/(IH^\text{mod})$ the induced filtration.

If $\Gamma = \Gamma_H$ is a graph with $\mu(\Gamma)=k$ and a distinguished ``$H$" subgraph, and $\Gamma_I$ is obtained by replacing this ``$H$''-subgraph by ``$I$'', then by applying the relation $IH^\text{mod}$ to this subgraph we find that
\begin{enumerate}[(i)]
\item if the edge involved is not part of a double or triple edge then the relation gives $\Gamma_H - \Gamma_I \in F^{k+1} \mathcal{G}\mathrm{raph}^\text{undec}/(IH^\text{mod})$,
\item if the edge involved is part of a double or triple edge then the relation is trivial (i.e.\ already holds in $\mathcal{G}\mathrm{raph}^\text{undec}$).
\end{enumerate}
Thus the associated graded of the induced filtration on $\mathcal{G}\mathrm{raph}^\text{undec}/(IH^\text{mod})$ can be described as $\mathcal{G}\mathrm{raph}^\text{undec}/(IH_0)$, where as in \cite{GN} the relation $IH_0$ means imposing the ``$I=H$'' relation when the four half-edges belong to different edges. Now $IH_0$ is an equivalence relation on the set of isomorphism classes of trivalent graphs without loops, and similarly to \cite[Proof of Proposition 2.3 (c)]{GN} it is easy to see that all connected trivalent graph without loops of the same rank and with the same legs are equivalent to each other: in other words the equivalences classes of such are given by partitions of $S$ (the parts are the legs of each connected component) labelled by a power of $e$ (recording the rank of the graph). It follows that the rank of $\mathcal{G}\mathrm{raph}^\text{undec}/(IH^\text{mod})$ in each degree, as an $R$-module, is at most that of $\mathcal{G}\mathrm{raph}(\emptyset)$ as determined in Lemma \ref{lem:JustGammas}, and so the composition in the statement of the theorem, which is an epimorphism, must be an isomorphism.
\end{proof}

\subsection{On the work of Garoufalidis and Nakamura}\label{sec:GN}

The discussion of the last few sections can be used to complete the work of Garoufalidis and Nakamura \cite{GN, GNCorr}, concerning the calculation of the invariants $[\Lambda^* V_{1^3} / (V_{2^2})]^{\mathrm{Sp}}$ in a stable range. Here we write $V_\lambda$ for the irreducible $\mathrm{Sp}$-representation corresponding to the partition $\lambda$, which was written as $[\lambda]_{\mathfrak{sp}}$ in those papers, and $V_{2^2}$ denotes the unique copy of this irreducible in $\Lambda^2 V_{1^3}$. Combining Theorem 1.1 and Proposition 2.3 (c) of \cite{GN} was supposed to calculate $[\Lambda^* V_{1^3} / (V_{2^2})]^{\mathrm{Sp}}$ in a stable range, but for the corrected version of Theorem 1.1 in \cite{GNCorr}, which expresses these invariants as $\mathcal{G}\mathrm{raph}^\text{undec}(\emptyset)_g/(IH_0^{bis})$, the authors say ``it turns out that a simple stable structure of [these invariants] as in Proposition 2.3 (c) will not be easy to detect''. However Theorem \ref{thm:TrivVsAllGraphs} and equation \eqref{eq:CalcGraphsEmpty} gives that
$$[\Lambda^* V_{1^3} / (V_{2^2})]^{\mathrm{Sp}} \cong \mathcal{G}\mathrm{raph}^\text{undec}(\emptyset)_g/(IH_0^{bis}) \cong \mathcal{G}\mathrm{raph}(\emptyset)_g \cong \bQ[\Gamma_1, \Gamma_2, \ldots]$$
in a stable range. Thus in fact Proposition 2.3 (c) of \cite{GN} is correct as stated.

\begin{remark}
This can also be obtained from the work of Felder, Naef, and Willwacher \cite{FNW}. Specifically, the graded-commutative algebra $A_{(g)}$ defined just before Theorem 6 of that paper is $\Lambda^* V_{1^3} / (V_{2^2})$, and Theorem 6 together with Proposition 36 (3) also gives the above.
\end{remark}

\bibliographystyle{amsalpha}
\bibliography{biblio}

\end{document}